\definecolor{darkgreen}{rgb}{0.00,0.50,0.10}
\definecolor{lightgreen}{rgb}{0.20,0.70,0.30}
\newtheorem{theorem}{Theorem}
\newtheorem{lemma}[theorem]{Lemma} 
\newtheorem{corollary}[theorem]{Corollary} 
\newtheorem{proposition}[theorem]{Proposition}
\newtheorem{definition}[theorem]{Definition}
\newtheorem{conjecture}[theorem]{Conjecture}
\newcommand{\Z}{\mathbb{Z}}
\newcommand{\upb}{\mathrm{b}}
\newcommand{\upc}{\mathrm{c}}
\newcommand{\upe}{\mathrm{e}}
\newcommand{\uph}{\mathrm{h}}
\newcommand{\upo}{\mathrm{o}}
\newcommand{\upw}{\mathrm{w}}
\newcommand{\upC}{\mathrm{C}}
\newcommand{\upE}{\mathrm{E}}
\newcommand{\upK}{\mathrm{K}}
\newcommand{\upM}{\mathrm{M}}
\newcommand{\upN}{\mathrm{N}}
\newcommand{\NCL}{\mathrm{NCL}}
\newcommand{\upP}{\mathrm{P}}
\newcommand{\upV}{\mathrm{V}}
\newcommand{\upX}{\mathrm{X}}
\newcommand{\upZ}{\mathrm{Z}}
\newcommand{\Nsi}{\mathrm{Nsi}}
\newcommand{\bw}{\mathrm{bw}}
\newcommand{\Supp}{\mathrm{Supp}}
\newcommand{\CL}{\mathrm{CL}}
\newcommand{\rk}{\mathrm{rk}}
\newcommand{\ev}{\mathrm{ev}}
\newcommand{\od}{\mathrm{od}}
\DeclareMathOperator{\Hom}{Hom}
\DeclareMathOperator{\Cay}{Cay}
\DeclareMathOperator{\Aut}{Aut}
\title[Prisms, M{\"o}bius ladders, cycle space of dense graphs]{On prisms, 
M{\"o}bius ladders and the cycle space of dense graphs}
\author{Peter Heinig}
\address{Zentrum~Mathematik, M9, Technische~Universit{\"a}t~M{\"u}nchen, \newline
Boltzmannstra{\ss}e~3, D-85747 Garching~bei~M{\"u}nchen, Germany} 
\email{heinig@ma.tum.de}
\thanks{The author is partially supported by DFG grant TA 309/2-2, 
by the ENB graduate program TopMath and by TUM Graduate School. 
A large part of the present work was done while the author was 
partially supported by a scholarship from the Max Weber-Programm Bayern.}
\begin{document}

\begin{abstract}
For a graph $X$, let $f_0(X)$ denote its number of vertices, $\delta(X)$ its 
minimum degree and $\upZ_1(X;\Z/2)$ its cycle space in the standard 
graph-theoretical sense (i.e. $1$-dimensional cycle group in the sense of 
simplicial homology theory with $\Z/2$-coefficients). Call a 
graph \emph{Hamilton-generated} if and only if the set of all Hamilton circuits 
is a $\Z/2$-generating system for $\upZ_1(X;\Z/2)$. The 
main purpose of this paper is to prove the following: for every $\gamma>0$ there 
exists $n_0\in \Z$ such that for every graph $X$ with $f_0(X)\geq n_0$ vertices, \\
(1) if $\delta(X)\geq (\tfrac12 + \gamma) f_0(X)$ and $f_0(X)$ is odd, 
then $X$ is Hamilton-generated, \\
(2) if $\delta(X)\geq (\tfrac12 + \gamma) f_0(X)$ and $f_0(X)$ is even, 
then the set of all Hamilton circuits of $X$ generates a codimension-one 
subspace of $\upZ_1(X;\Z/2)$, and the set of all circuits of $X$ having 
length either $f_0(X)-1$ or $f_0(X)$ generates all of $\upZ_1(X;\Z/2)$, \\
(3) if $\delta(X)\geq (\tfrac14 + \gamma) f_0(X)$ and $X$ is square bipartite, 
then $X$ is Hamilton-generated. 

All these degree-conditions are essentially best-possible. 
The implications in (1) and (2) give an asymptotic affirmative answer to 
a special case of an open conjecture which according 
to [\textit{European~J.~Combin.}~4~(1983),~no.~3,~p.~246] originates with A.~Bondy.  

\medskip
\noindent
{\it Keywords:} Cayley graph, cycle group, cycle space, 
finite-dimensional vector spaces, Hamilton circuit, Hamilton-connected, 
Hamilton-laceable, prism graph, M{\"o}bius ladder, monotone graph property, 
spanning subgraphs
\end{abstract}

\maketitle 

\section{Introduction}\label{9877667556545454546565}

There exist investigations in which the set underlying a finite-dimensional vector 
space is not forgotten, but made to play a central part. One such investigation was 
begun thirty years ago by I.~B.-A.~Hartman and concerns the cycle space 
$\upZ_1(X;\Z/2)$ of a finite graph $X$ (whose vectors are the Eulerian 
subgraphs of $X$): under what conditions does $\upZ_1(X;\Z/2)$ admit a basis 
over $\Z/2$ consisting of \emph{long} graph-theoretical circuits only? Hartman proved 
\cite[Theorem~1]{MR725072} a theorem which guarantees that---barring the sole 
exception of $X$ being a complete graph with an even number of vertices---for 
every $2$-connected finite graph $X$, the set of all circuits of 
length \emph{at least} $\delta(X)+1$ generates $\upZ_1(X;\Z/2)$. 

The lower the minimum degree $\delta(X)$, the larger the set of 
cycle-lengths one has to allow in order to be guaranteed a generating system by 
Hartman's theorem. In particular, statements guaranteeing a generating system 
consisting entirely of \emph{Hamilton circuits} (a natural thing to ask for once 
the topic of long circuits has been broached) remain almost inaccessible via this 
theorem: one has to set $\delta(X) := f_0(X) - 1$, hence $X\cong \upK^{f_0(X)}$, and 
what remains of Hartman's general theorem is a rather special (albeit still 
non-obvious) statement about the complete graph. 

The property of $\upZ_1(X;\Z/2)$ being generated by the Hamilton circuits of $X$ 
seems to have been first studied by 
B.~Alspach, S.~C.~Locke and D.~Witte \cite{MR1057481}. They proved that $X$ has 
the property if $X$ is a connected Cayley graph on a finite abelian group and 
is either bipartite or has odd order (these hypotheses being mutually 
exclusive for connected Cayley graphs on finite abelian groups). 

Here, we will for the first time prove minimum degree conditions 
guaranteeing this property (Section~\ref{sec:concludingremarks} contains a 
short survey of the relevant literature.) We will accomplish this by way of a 
two-layered strategy which first harnesses theorems from extremal graph theory 
to prove the existence of certain spanning subgraphs which can be used to transfer 
the property to the entire ambient graph in a second step. 
The main purpose of the present paper is to prove 
the following previously unknown implications: 
\begin{theorem}[{sufficient conditions for a cycle space generated by 
Hamilton circuits; \ref{thm:minDegOneFourthPlusGammaImpliesHamiltonGeneratednessInBipartiteGraphs} had already been announced in \cite{MR2735919}}]\label{thm:mainresults}
For every $\gamma>0$ there exists $n_0\in \Z$ such that for every graph $X$ with 
$f_0(X)\geq n_0$, the following is true: 
\begin{enumerate}[label={\rm(I\arabic{*})}]
\item\label{thm:minDegOneHalfPlusGammaImpliesHamiltonGeneratedForOddOrder} 
if $\delta(X)\geq(\tfrac12 + \gamma) f_0(X)$ and $f_0(X)$ is odd, 
then $X$ is Hamilton-generated,
\item\label{thm:minDegOneHalfPlusGammaImpliesAllOneCanAskForWhenOrderIsEven} 
if $\delta(X)\geq(\tfrac12 + \gamma) f_0(X)$ and $f_0(X)$ is even, then the set of 
all Hamilton-circuits of $X$ generates a codimension-one subspace 
of $\upZ_1(X;\Z/2)$ and the set of all circuits of $X$ with lengths 
either $f_0(X)-1$ or $f_0(X)$ generates all of $\upZ_1(X;\Z/2)$,
\item\label{thm:minDegOneFourthPlusGammaImpliesHamiltonGeneratednessInBipartiteGraphs} if $\delta(X)\geq(\frac14 + \gamma) f_0(X)$ and $X$ is square bipartite, 
then $X$ is Hamilton-generated,
\item\label{thm:minDegTwoThirds} 
if in \ref{thm:minDegOneHalfPlusGammaImpliesHamiltonGeneratedForOddOrder} 
and \ref{thm:minDegOneHalfPlusGammaImpliesAllOneCanAskForWhenOrderIsEven} 
the condition `$\delta(X)\geq(\tfrac12 + \gamma) f_0(X)$' is replaced by 
`$\delta(X)\geq \frac23 f_0(X)$', then without further change to 
\ref{thm:minDegOneHalfPlusGammaImpliesHamiltonGeneratedForOddOrder} 
or \ref{thm:minDegOneHalfPlusGammaImpliesAllOneCanAskForWhenOrderIsEven} 
it suffices to take $n_0:=2\cdot 10^8$.
\end{enumerate}
Implication \ref{thm:minDegOneHalfPlusGammaImpliesHamiltonGeneratedForOddOrder} 
becomes false if `~$(\tfrac12 + \gamma) f_0(X)$~' is replaced by 
`~$\lfloor \tfrac{f_0(X)}{2}\rfloor$ and $X$ Hamilton-connected'. 
Implication 
\ref{thm:minDegOneFourthPlusGammaImpliesHamiltonGeneratednessInBipartiteGraphs} 
becomes false if `~$(\tfrac14 + \gamma) f_0(X)$~' is replaced by 
`~$\tfrac14 f_0(X)$ and $X$ hamiltonian'.  
\end{theorem}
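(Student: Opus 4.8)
The plan is to peel a piece of linear algebra off the extremal graph theory. Write $V:=\upZ_1(X;\Z/2)$ and let $\phi\colon V\to\Z/2$ be the linear functional sending an even subgraph to its number of edges modulo $2$; since $|C\triangle C'|\equiv|C|+|C'|\pmod 2$ this is well defined, and every Hamilton circuit $H$ satisfies $\phi(H)=f_0(X)\bmod 2$. Set $V_0:=\ker\phi$. As the difference of any two Hamilton circuits lies in $V_0$, the whole problem rests on one assertion, which I call the \emph{Core Claim}: the differences $H\triangle H'$ of Hamilton circuits of $X$ span $V_0$. Granting it, all three implications follow formally. If $f_0(X)$ is odd then $X$ is non-bipartite (a bipartite graph of odd order has $\delta<f_0(X)/2$), so $\phi\ne0$, $\dim V_0=\dim V-1$, and any single Hamilton circuit lies outside $V_0$; hence $V_0+\langle H\rangle=V$ and $X$ is Hamilton-generated, giving \ref{thm:minDegOneHalfPlusGammaImpliesHamiltonGeneratedForOddOrder}. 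If $f_0(X)$ is even and $X$ is non-bipartite, the Hamilton circuits all have $\phi=0$ and so span exactly the codimension-one subspace $V_0$; by pancyclicity there is a circuit of odd length $f_0(X)-1$, which has $\phi=1$ and completes the span, giving \ref{thm:minDegOneHalfPlusGammaImpliesAllOneCanAskForWhenOrderIsEven}. If $X$ is square bipartite then every cycle is even, so $\phi\equiv0$, $V_0=V$, and the Core Claim already yields \ref{thm:minDegOneFourthPlusGammaImpliesHamiltonGeneratednessInBipartiteGraphs}.

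For the Core Claim I would first reduce to quadrilaterals. A routine density argument shows that in this degree range $X$ is triangle-connected and every cycle is a sum of triangles, so $V$ is spanned by triangles and $V_0$ by their pairwise differences; the differences of edge-sharing triangles are quadrilaterals, and triangle-connectedness lets these quadrilaterals already span $V_0$ (in the bipartite case the quadrilaterals span $V=V_0$ outright). It therefore suffices to write each quadrilateral $Q=abcd$ as $H\triangle H'$ with both factors Hamilton circuits. The decisive observation is that $H\triangle H'$ is a quadrilateral exactly when $H'$ arises from $H$ by a single $2$-switch: if $H$ traverses the four vertices in the crossing cyclic order $a,b,\dots,d,c,\dots$ and thus contains the two opposite edges $ab$ and $cd$, then deleting them and inserting $bc,da$ reverses one arc, produces a Hamilton circuit $H'$, and gives $H\triangle H'=Q$. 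So the Core Claim reduces to: for every quadrilateral $Q$ there is a Hamilton circuit of $X$ using two opposite edges of $Q$ in crossing position.

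This is where the extremal input and the prism/M\"obius-ladder gadgets enter, and where I expect the real difficulty to lie. Given $Q=abcd$, the plan is to embed in $X$ a \emph{spanning} prism or M\"obius ladder having $Q$ as one of its quadrilateral faces; the two gadget Hamilton circuits that differ by that face are then Hamilton circuits of $X$ realizing $Q$. Because a prism is bipartite while a M\"obius ladder is not, selecting the gadget according to $f_0(X)\bmod 4$ (respectively according to whether $X$ is square bipartite) matches the parity of the face to the parity of the ambient cycle space. Producing such a spanning gadget \emph{through a prescribed pair of independent edges and with the correct linking} is a Dirac-type embedding problem; under $\delta(X)\ge(\tfrac12+\gamma)f_0(X)$, and under $\delta(X)\ge(\tfrac14+\gamma)f_0(X)$ in the balanced bipartite case via the Moon--Moser threshold, I would obtain it by the regularity--blow-up or absorption method, which simultaneously fixes $n_0$. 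For \ref{thm:minDegTwoThirds} the stronger bound $\delta(X)\ge\tfrac23 f_0(X)$ lets one discard regularity in favour of an explicit rotation--extension argument, thereby pinning down $n_0:=2\cdot10^8$.

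Finally, for sharpness I would construct, for \ref{thm:minDegOneHalfPlusGammaImpliesHamiltonGeneratedForOddOrder}, a Hamilton-connected graph of odd order with $\delta=\lfloor f_0(X)/2\rfloor$ in which some quadrilateral cannot be realized as a crossing $2$-switch of any Hamilton circuit, so that the Hamilton circuits miss a vector of $V_0$; a balanced blow-up sitting exactly on the threshold is designed to make that deficient cycle visible. For \ref{thm:minDegOneFourthPlusGammaImpliesHamiltonGeneratednessInBipartiteGraphs} the analogous Hamiltonian balanced bipartite graph with $\delta=\tfrac14 f_0(X)$ is built so that one quadrilateral lies outside the Hamilton-circuit span. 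The main obstacle throughout is the uniform step of the Core Claim: routing, for an arbitrary prescribed pair of independent edges and in the correct crossing orientation, a spanning prism or M\"obius ladder through every quadrilateral of a spanning generating set. Marrying this structured extremal-existence statement to the parity bookkeeping above---uniformly, and with the gadget's bipartiteness matched to $f_0(X)\bmod 4$---is the heart of the argument.
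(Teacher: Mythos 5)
Your linear--algebraic frame (the parity functional $\phi$, the reduction of all three implications to the ``Core Claim'' that differences of Hamilton circuits span $\ker\phi$, and the further reduction to quadrilaterals via triangle decompositions) is a genuinely different organization from the paper's, and the formal bookkeeping in it is essentially sound. But the proposal has a genuine gap precisely where you place ``the real difficulty'': the step that realizes every quadrilateral $Q$ of a spanning set as $H\triangle H'$ for Hamilton circuits $H,H'$. You reduce this to embedding a \emph{spanning} prism or M\"obius ladder \emph{through a prescribed quadrilateral} (or, equivalently, to finding a Hamilton circuit through two prescribed independent edges of $Q$ in a prescribed \emph{crossing} cyclic order). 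None of the tools you invoke delivers this. The bandwidth-type embedding theorems (Theorems~\ref{thm:BoettcherSchachtTaraz2009} and \ref{thm:BoettcherHeinigTaraz2010}) produce an \emph{unanchored} spanning copy: they give no control whatsoever over where the image lands, so they cannot be asked to route a face of the prism through four prescribed vertices. And the weaker formulation via $2$-switches is not secured either: although $\delta(X)\geq(\tfrac12+\gamma)f_0(X)$ does force a Hamilton circuit through any two independent edges $ab,cd$, the circuit may traverse them in the non-crossing order, in which case the switch produces the quadrilateral $abdc$ rather than $abcd$ --- and $abdc$ need not even be a subgraph of $X$. Controlling this orientation (uniformly, for every quadrilateral of a generating set, and in the bipartite $\tfrac14$-threshold regime where even the unoriented statement is unclear) is an unproved anchored-embedding problem, not a corollary of regularity or absorption as stated. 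Until that is supplied, the Core Claim --- and with it all of \ref{thm:minDegOneHalfPlusGammaImpliesHamiltonGeneratedForOddOrder}--\ref{thm:minDegTwoThirds} --- is unproved.

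The paper avoids this anchoring problem entirely, and you should note why, since it is the crux. It embeds a \emph{single} unanchored spanning subgraph $Y$ (one of $\Pr_r^{\boxtimes}$, $\upM_r^{\boxtimes}$, $\Pr_r^{\boxminus}$, $\upM_r^{\boxminus}$, $\CL_r$, $\upC_n^2$, chosen by parity) which is verified by hand to be both Hamilton-connected and Hamilton-generated (with the correct codimension), and then lifts these properties to $X$ by the monotonicity argument of Lemma~\ref{monotonicityofsetoflminus1pathconnectedlgeneratedfinitegraphs}: each edge $e=uv$ added to $Y$ closes a Hamilton $u$--$v$-path into a new Hamilton circuit whose class spans exactly the one new dimension $\upZ_1(Y+e;\Z/2)/\upZ_1(Y;\Z/2)$, so the codimension of $\langle\mathcal{H}(\cdot)\rangle_{\Z/2}$ never increases as edges are restored. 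This replaces your ``one anchored embedding per quadrilateral'' by ``one unanchored embedding plus an induction on edges'', which is what makes the off-the-shelf embedding theorems sufficient. Two smaller points: your ``routine density argument'' that quadrilaterals span the cycle space needs genuine care in the square bipartite case at $\delta\geq(\tfrac14+\gamma)f_0$ (common neighbours of two vertices in the same class may all lie on the cycle being decomposed); and the sharpness examples cannot be left as ``a balanced blow-up on the threshold'' --- the paper exhibits explicit graphs on $7$ and $12$ vertices and computes the rank of the Hamilton-circuit matrix directly.
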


A purely combinatorial way of phrasing the conclusions in 
Theorem~\ref{thm:mainresults} is to say that `every circuit in $X$ can be 
realized as a symmetric difference of some Hamilton circuits of $X$'. In 
this variant phrasing, talking only about graph-theoretical circuits (and not more 
generally about cycles in the sense of homology theory) does not lose any 
generality since for any graph $X$ and any cycle $c\in \upZ_1(X;\Z/2)$, the 
support $\Supp(c)$ is an edge-disjoint union of graph-theoretical circuits 
\cite[Proposition 1.9.2]{MR2159259}. Let us note in passing that the latter fact 
generalizes to locally-finite \emph{infinite} graphs 
\cite[Theorem~7.2, equivalence (i) $\Leftrightarrow$ (iii)]{MR2057684}, that 
it has been given a precise sense for arbitrary 
\emph{compact metric spaces} \cite[Theorem~1.4]{arXiv:1003.5115v3}, and, last but 
not least, that \emph{linear-algebraic properties of Hamilton circles} (in the 
sense of \cite{MR2443120}) \emph{in infinite graphs}---i.e. the role of 
infinite Hamilton circles vis-{\`a}-vis the cycle space (in the sense 
of \cite{MR2080038} \cite{MR2128083} \cite{MR2057684} \cite{MR2789733})---is an 
unexplored research topic.

Theorem~\ref{thm:mainresults}, the main result of the present paper, adds to the 
growing corpus of knowledge about the following phenomenon: 
when studying the set of Hamilton circuits as a function 
of the minimum degree $\delta(X)$, it pours if it rains---slightly below a 
sufficient threshold there still exist graphs which do not 
have \emph{any} Hamilton circuit, slightly above the threshold suddenly every 
graph contains not merely one but rather a plethora of Hamilton circuits 
satisfying many \emph{additional requirements}. This line of investigation 
appears to begin with C.~St.~J.~A.~Nash-Williams' 
proof \cite[Theorem~2]{MR0299517} \cite[Theorem~3]{MR0284366} that for every 
graph $X$ with $\delta(X)\geq \tfrac12 f_0(X)$ there exists not only one 
(Dirac's theorem \cite[Theorem~3]{MR0047308} \cite[Theorem~10.1.1]{MR2159259}) but 
at least $\lfloor \tfrac{5}{224} n \rfloor$ edge-disjoint Hamilton circuits. For 
sufficiently large graphs $X$ with $\delta(X)$ a little larger than 
$\tfrac12 f_0(X)$, Nash-Williams' theorem was improved by 
D.~Christofides, D.~K{\"u}hn and D.~Osthus \cite[Theorem~2]{arXiv:0908.4572v1} to 
the guarantee that there are at least $\tfrac18 n$ edge-disjoint Hamilton 
circuits---this being an asymptotically best-possible result in view of examples 
\cite[p.~818]{MR0299517} which show that in graphs $X$ with 
$\delta(X)\geq \tfrac12 f_0(X)$ and having a slightly irregular degree sequence, 
the number of edge-disjoint Hamilton circuits is bounded by $\tfrac18 n$. More can 
be achieved if besides a high minimum-degree, additional requirements are imposed 
on the host graph. Two aspects of this are (1) a regular degree sequence, (2) a 
random host graph. 

As to (1), if the host graph is required to be regular in 
advance, a still unsettled conjecture of B.~Jackson \cite[p.~13, l.~17]{MR527728} 
posits that a $d$-regular graph with $d \geq \tfrac{f_0(X)-1}{2}$ actually 
realizes the obvious upper bound $\lfloor \tfrac12 d \rfloor$ for the number of 
edge-disjoint Hamilton circuits. Christofides, K{\"u}hn and Osthus proved a 
theorem which in a sense comes arbitrarily close to the 
conjecture \cite[Theorem~5]{arXiv:0908.4572v1}. 

As to (2), A.~Frieze and M.~Krivelevich conjectured \cite[p.~222]{MR2430433} that 
for \emph{any} $0\leq p_n \leq 1$ an Erd{\H o}s--R{\'e}nyi random graph 
$\mathbb{G}_{n,p_n}$ a.a.s. attains the 
a priori maximum of $\lfloor \delta/2 \rfloor$ edge-disjoint Hamilton-circuits. 
(For $p_n$ which are low enough to a.a.s. imply $\delta \leq 1$ the 
conjecture claims nothing.) They proved \cite[Theorem~1]{MR2430433} the 
conjecture for $p_n \leq (1+o(1)) \tfrac{\log n}{n}$. 
In \cite[Theorem~2]{arXiv:1104.4412v1} F.~Knox, D.~K{\"u}hn and D.~Osthus proved 
the conjecture for a class of functions $p_n$ that sweeps a huge portion of the 
range $\tfrac{\log n}{n} \ll p_n \ll 1$. A remaining gap (starting at 
$\tfrac{\log n}{n}$) in the probability range heretofore covered was recently 
closed by M.~Krivelevich and W.~Samotij \cite{arXiv:1109.5341v1}. 
According to \cite[p.~2]{arXiv:1109.5341v1} the conjecture now remains open only 
for $p_n\geq 1-(\log(n))^{9}\cdot n^{-\frac14}$, i.e. for unusually dense 
Erd{\H o}s--R{\'e}nyi random graphs.  

One way to look at these results is as providing `extremely orthogonal' 
(i.e. no additive cancellation is involved in the vanishing of the standard 
bilinear form) sets of Hamilton circuits.  As they stand, these theorems are far 
from providing `orthogonal' Hamilton-circuit-bases for $\upZ_1(X;\Z/2)$: at the 
relevant minimum degrees, the dimension of $\upZ_1(X;\Z/2)$ is much 
higher than $\delta(X)/2$ (roughly, one has 
$\dim_{\Z/2} \upZ_1(X;\Z/2) \in \Theta_{f_0(X)\to\infty}(\delta(X)^2)$ ), so the sets 
of mutually disjoint Hamilton circuits are---while `very' orthogonal---far from 
being generating sets of $\upZ_1(X;\Z/2)$. Yet it does not seem unlikely that the 
above-mentioned theorems can be extended in a more algebraic vein by devising 
generalizations of `edge-disjoint' (e.g. `size of the intersection of the 
supports even') and thus be made to resonate with results like 
Theorem~\ref{thm:mainresults}. 

Further context for Theorem~\ref{thm:mainresults} is provided by the following 
open conjecture (thirty years ago, S.~C.~Locke proved 
\cite[Theorem~2 and Corollary~4]{MR821540} that Bondy's conjecture is true under 
the \emph{additional} assumption of `$X$ non-hamiltonian or $f_0(X)\geq 4d-5$'): 
\begin{conjecture}[{J.~A.~Bondy~1979;~\cite[p.~246]{MR725072} \cite[Conjecture~1]{MR821540} \cite[p.~256]{MR815581} \cite[Conjecture~1]{MR1174832} \cite[Conjecture~A]{MR1760293} \cite[p.~21]{MR2171084} \cite[p.~12]{MR2279163}}]\label{thm:BondyConjecture}
For every $d\in \Z$, in every vertex-$3$-connected graph $X$ with $f_0(X) \geq 2d$ 
and $\delta(X)\geq d$, the set of all circuits of length at least $2d-1$ is a 
$\Z/2$-generating system of $\upZ_1(X;\Z/2)$. 
\end{conjecture}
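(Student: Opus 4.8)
The plan is to reduce Bondy's conjecture to the following purely local assertion: \emph{in any graph $X$ satisfying the hypotheses, every circuit of length strictly less than $2d-1$ is a $\Z/2$-sum of circuits of length at least $2d-1$}. Granting this, the conjecture follows from Hartman's theorem \cite[Theorem~1]{MR725072}, which already furnishes a generating system of $\upZ_1(X;\Z/2)$ consisting of circuits of length at least $\delta(X)+1$ (the sole exception $X\cong\upK^{2m}$ being disposed of by a direct computation). One then rewrites each generator whose length lands in $[\delta(X)+1,\,2d-2]$ as a $\Z/2$-sum of admissible long circuits, leaving a generating family of circuits of length at least $2d-1$. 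This interval is empty precisely when $d\leq\tfrac12\delta(X)+1$, the regime in which Bondy's statement is immediate from Hartman's; the substance therefore lives in $\tfrac12\delta(X)+1<d\leq\min\{\delta(X),\lfloor\tfrac12 f_0(X)\rfloor\}$, with the extreme case $d=\lfloor\tfrac12 f_0(X)\rfloor$ demanding that near-Hamilton circuits alone suffice.

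The engine for the local assertion is a theta-exchange. Given a circuit $C$ with $|C|<2d-1$, split it at two vertices $a,b$ into arcs $A_1,A_2$ and seek an $a$--$b$ path $P$ internally disjoint from $C$; the three circuits then satisfy $C=(A_1\cup P)+(A_2\cup P)$ in $\upZ_1(X;\Z/2)$, since $P$ cancels. Writing $\ell_i=|A_i|$ and $p=|P|$, both summands clear the threshold once $p\geq 2d-1-\min\{\ell_1,\ell_2\}$, so the task is to route a sufficiently long detour through the $f_0(X)-|C|$ vertices off $C$. Here the hypotheses pull their weight: vertex-$3$-connectivity, via Menger's theorem, keeps $X-\{a,b\}$ connected so that a detour exists at all, while $\delta(X)\geq d$ forces every vertex reached to have many onward neighbours, which is what one greedily exploits to make $P$ long. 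When $C$ is only moderately short a brief detour suffices; when $C$ is short one does not insist on clearing the threshold in a single step but iterates, each exchange strictly lengthening the shorter circuit produced, until every summand is admissible.

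To keep the three hypotheses intact across the iteration I would mount an induction on $f_0(X)$ anchored in the structure theory of $3$-connected graphs: by Tutte's wheel theorem every vertex-$3$-connected graph is built from a wheel by edge-additions and vertex-splittings, suggesting an inductive descent by the inverse operations. This mirrors the two-layered method of the present paper---first secure a well-understood skeleton, then transfer the generation property to the ambient graph---except that here the skeleton would be supplied by connectivity structure rather than by extremal embeddings. The reductions must, however, be chosen so as to re-establish $\delta(X)\geq d$ and $f_0(X)\geq 2d$ at each stage, and since both edge-deletion and contraction depress degrees this reconciliation is delicate and may force a more bespoke decomposition than the raw wheel theorem provides.

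The hard part---and the reason the conjecture is still open---is the tight regime $d\approx\tfrac12 f_0(X)$. There $2d-1$ is essentially $f_0(X)-1$, the theta-exchange is forced to manufacture near-Hamilton circuits, and the routing budget $f_0(X)-|C|$ collapses: when $f_0(X)=2d$ a detour of the required length, which is of order $2d$, would have to pass through almost every off-cycle vertex, so the greedy extension retains essentially no slack and the length bookkeeping becomes brittle. This is precisely the Hamilton-generated end of the spectrum that the present paper reaches only under the minimum-degree surplus $\delta(X)\geq(\tfrac12+\gamma)f_0(X)$ and only for $f_0(X)$ beyond a threshold (Theorem~\ref{thm:mainresults}); converting that asymptotic, surplus-requiring statement into Bondy's exact, surplus-free threshold---uniformly in $d$ and with no lower bound on the order---is the obstacle I expect to consume the bulk of the effort, and is likely where a genuinely new idea beyond theta-exchanges and connectivity reductions will be required.
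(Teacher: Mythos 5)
The statement you were asked to prove is Bondy's conjecture, which the paper records precisely as an \emph{open} conjecture: the paper contains no proof of it, and only establishes (in Theorem~\ref{thm:mainresults}, items \ref{thm:minDegOneHalfPlusGammaImpliesHamiltonGeneratedForOddOrder} and \ref{thm:minDegOneHalfPlusGammaImpliesAllOneCanAskForWhenOrderIsEven}) an asymptotic affirmative answer to two special cases, under the strictly stronger hypothesis $\delta(X)\geq(1+\gamma)d$ and only for $f_0(X)$ sufficiently large, via spanning embeddings of $\Pr_r^{\boxtimes}$, $\upM_r^{\boxtimes}$, $\Pr_r^{\boxminus}$, $\upM_r^{\boxminus}$ and a monotonicity lemma. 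So there is no ``paper's own proof'' to compare against, and your proposal cannot be judged as matching or diverging from one.

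As a proof attempt, your proposal is not a proof but a research programme, and you say as much yourself. The genuine gaps are the following. First, the core of your plan --- the theta-exchange --- requires, for a circuit $C$ with $|C|<2d-1$ split at $a,b$, an $a$--$b$ path $P$ internally disjoint from $C$ with $|P|\geq 2d-1-\min\{\ell_1,\ell_2\}$; you never establish that such a long detour exists. Menger's theorem gives the \emph{existence} of an internally disjoint path, not a \emph{long} one, and the greedy length-extension you invoke can terminate early (a high minimum degree does not prevent all onward neighbours of the current endpoint from lying on $C$ or on the path already built). Second, the proposed induction via Tutte's wheel theorem is only gestured at, and you concede that the inverse operations destroy the hypotheses $\delta(X)\geq d$ and $f_0(X)\geq 2d$; no repaired decomposition is supplied. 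Third, you explicitly leave the tight regime $d\approx\tfrac12 f_0(X)$ --- which is exactly the regime the conjecture is about when $f_0(X)=2d$ or $2d+1$, and exactly the regime this paper attacks by entirely different (extremal-embedding) means --- unresolved, stating that a genuinely new idea is required. That concession is accurate, but it means the proposal does not prove the statement.
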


The present paper gives an asymptotic answer for two special cases of 
Conjecture~\ref{thm:BondyConjecture}: If `$\delta(X)\geq d$' is replaced by 
$\delta(X)\geq (1+\gamma) d$ for an arbitrary $\gamma>0$, and if $f_0(X)$ is 
sufficiently large, then 
\ref{thm:minDegOneHalfPlusGammaImpliesAllOneCanAskForWhenOrderIsEven} in 
Theorem~\ref{thm:mainresults} below says that in the case of `$f_0(X)\geq 2d$' 
holding as `$f_0(X) = 2d$', Bondy's conclusion is true; in case that 
`$f_0(X)\geq 2d$' holds as `$f_0(X) = 2d+1$', then 
\ref{thm:minDegOneHalfPlusGammaImpliesHamiltonGeneratedForOddOrder} in 
Theorem~\ref{thm:mainresults} says that of the three circuit lengths 
$f_0(X)-2$, $f_0(X)-1$ and $f_0(X)$ which Bondy allows as lengths of the 
generating circuits, $f_0(X)$ alone is enough. It seems likely that with the 
techniques of this paper it will be possible to make further inroads towards 
the full Conjecture~\ref{thm:BondyConjecture}. 

\addtocounter{footnote}{1}

\begin{center}
\begin{table}
    \begin{tabular}{ l l}
  \text{Aspects of Hamilton circuits}  & \text{Literature}  \\ \hline
\parbox{0.45\linewidth}{efficient algorithms for finding a copy} 
& \cite[Section~4]{MR0414429}, \cite{MR2510568} 
\\ 
  number of all copies & \cite{MR1969376}, \cite{MR2520274}, \cite{MR2520275} 
\\ 
number of mutually edge-disjoint copies  & \cite{MR0299517} \cite{MR0284366} 
\\ 
host graph is (in some sense) random & \cite{MR1943857} \cite{arXiv:1006.1268v1} \cite{arXiv:1101.3099v1} \cite{arXiv:1104.4412v1} \cite{arXiv:1108.2502v1} \cite{arXiv:1109.5341v1} \\  
linear algebraic properties  &  this paper \\ 
\end{tabular}
\caption{Some aspects of Hamilton circuits in graphs with high-minimum degree.}
\end{table}
\label{tab:aspectsofthesetofhamiltoncircuits}
\end{center}

\addtocounter{footnote}{-1}

\subsection*{Structure of the paper}\label{8i76tr565666g78g78}

There are four sections after the Introduction~\ref{9877667556545454546565}. 
In Section~\ref{section:presentationofmainresults} we develop a plan for proving 
Theorem~\ref{thm:mainresults}, in the process introducing all the auxiliary 
statements that we will later draw upon. 
In Section~\ref{677656565658899822222222222222221}, the plan 
is carried out in detail, in particular by giving proofs for all the auxiliary 
statements. Section~\ref{sec:alternativeargumentation} is logically superfluous 
but provides an alternative argumentation for a part of the proof 
of \ref{thm:minDegOneFourthPlusGammaImpliesHamiltonGeneratednessInBipartiteGraphs} in 
Theorem~\ref{thm:mainresults}. Section~\ref{sec:concludingremarks} surveys the 
literature relevant to Theorem~\ref{thm:mainresults} and mentions open problems. 

\section{Main results}\label{section:presentationofmainresults}

Let us first introduce terminology. We adopt the common convention that a 
$2$-set $\{v',v''\}$ can be abbreviated as $v'v''$. By `graph' we will mean 
`finite simple undirected graph', equivalently `$1$-dimensional simplicial complex'. 
If $X$ and $Y$ are graphs, then $Y\hookrightarrow X$ means that there exists an 
injective graph homomorphism $Y\to X$ (hence there is a subgraph of $X$ isomorphic 
to $Y$).  A path of \emph{length} (i.e. number of its edges) $\ell$ will be 
denoted by $\upP_\ell$ and a circuit of length $\ell$ by $\upC_\ell$. (As is done 
in e.g. \cite{MR0299517} and \cite{MR1373656} we reserve the word `cycle' for the 
homological meaning and use the more specific term `circuit' for 
`$2$-regular connected graph'.) For a graph $X$ we will write $\upV(X)$ for its 
vertex set, $\upE(X)$ for its edge set, $f_0(X):=\lvert \upV(X)\rvert$ and 
$f_1(X):=\lvert\upE(X)\rvert$. If $C$ is a circuit with 
$\upV(C) = \{v_0,v_1,v_2,\dotsc ,v_{\ell-1}\}$ and 
$\upE(C) = \{v_0v_1, v_1v_2, \dotsc, v_{\ell-1}v_0\}$, then we abbreviate 
$v_0v_1v_2\dotsc v_{\ell-1} v_0 := \upE(C)$. A subgraph $Y$ of a graph $X$ is 
called \emph{non-separating} if and only if the graph 
$X-Y:=(\upV(X)\setminus\upV(Y), \upE(X)\setminus \{e \in \upE(X)\colon 
e\cap\upV(Y)\neq\emptyset\} )$ is connected. A circuit $C$ in a graph $X$ is 
called \emph{non-separating induced} if and only if $C$ is non-separating and $C$ 
has no chords in $X$ (i.e. $\{ e\in\upE(X)\colon e\subseteq\upV(C)\} = \upE(C)$). 
We write $\Z/2:=\Z/2\Z$ and $\upc_e\in (\Z/2)^{\upE(X)}$ for the unique map 
with $\upc_e(e)=1\in\Z/2$ and $\upc_e(e')=0\in\Z/2$ for 
every $e\neq e'\in\upE(X)$. As usual, 
$\upC_1(X;\Z/2) := \bigwedge^2 \langle\upV(X)\rangle_{\Z/2}$ (second exterior 
power) denotes the $1$-dimensional chain group, where 
$\langle\upV(X)\rangle_{\Z/2}$ is the $\Z/2$-vector space freely generated 
by $\upV(X)$, and 
$\upZ_1(X;\Z/2):=\ker\left(\partial\colon\upC_1(X;\Z/2)\to\upC_0(X;\Z/2)\right)$ 
(standard boundary operator of simplicial homology theory) denotes the 
$1$-dimensional cycle group in the sense of simplicial homology with 
$\Z/2$-coefficients. This is the standard graph-theoretical \emph{cycle space} 
of a graph. It is a vector space over $\Z/2$ with 
$\dim_{\Z/2}\ \upZ_1(X;\Z/2) = f_1(X) - f_0(X) + 1 = \beta_1(X)$, 
the $1$-dimensional Betti number of $X$. Since $-1=1$ in $\Z/2$, 
in $\upC_1(X;\Z/2)$ we have $v_i\wedge v_j = v_j\wedge v_i$, hence we can 
write the standard basis of $\upC_1(X;\Z/2)$ as 
$\{$ $v_i\wedge v_j$ $\colon$ $v_iv_j\in \upE(X)$ $\}$, 
the latter notation being well-defined despite $v_iv_j = v_jv_i$. The 
notation $\mathcal{H}(X)$ denotes the set of all Hamilton 
circuits in $X$. For any set $\mathcal{M}$ of circuits 
in $X$ we say that `$\mathcal{M}$ generates $\upZ_1(X;\Z/2)$' if and only if 
$\{ \upc_C\colon C\in \mathcal{M}\}$ is a $\Z/2$-generating system of 
$\upZ_1(X;\Z/2)$, where $\upc_C$ is defined as the element of $\upC_1(X;\Z/2)$ 
having its support equal to $\upE(C)$. A bipartite graph is called \emph{square} 
if and only if its bipartition classes have equal size. If $X$ and $Y$ are 
graphs, we denote by $X \square Y$ the \emph{cartesian product} of $X$ and $Y$ 
(see e.g. \cite[Section~1.4]{MR1788124}). Moreover, if $X$ is a graph, then 
we write 
$\delta(X) := \min_{v\in\upV(X)} \lvert \upN_X(v)\rvert$ for the minimum degree, 
$\Delta(X) := \max_{v\in\upV(X)} \lvert \upN_X(v)\rvert$ for the maximum degree of $X$, 
and $\upN_X(v):=\{ w\in\upV(X)\colon \{v,w\}\in\upE(X)\}$ for every $v\in\upV(X)$. 
By \emph{$k$-connected} we mean the standard graph-theoretical notion of 
being `vertex-$k$-connected' (cf. e.g. \cite[Section~1.4]{MR2159259}).

\subsection{Plan of the proof of Theorem~\ref{thm:mainresults}}\label{8uy77y6676t6tg6tg6t6g7}

The proof of Theorem~\ref{thm:mainresults} will be broken into the following steps 
(the strategy is the same for 
\ref{thm:minDegOneHalfPlusGammaImpliesHamiltonGeneratedForOddOrder}--\ref{thm:minDegTwoThirds}, but the auxiliary spanning subgraphs used are different): 

\begin{enumerate}[label={\rm(St\arabic{*})}]
\item\label{it:proofstrategy:extremalgraphtheorystep} 
Prove the existence of suitably chosen spanning subgraphs $Y\hookrightarrow X$; 
for \ref{thm:minDegOneHalfPlusGammaImpliesHamiltonGeneratedForOddOrder} and 
\ref{thm:minDegOneHalfPlusGammaImpliesAllOneCanAskForWhenOrderIsEven} by using 
Theorem~\ref{thm:BoettcherSchachtTaraz2009}, for 
\ref{thm:minDegOneFourthPlusGammaImpliesHamiltonGeneratednessInBipartiteGraphs} 
by using Theorem~\ref{thm:BoettcherHeinigTaraz2010}, and for \ref{thm:minDegTwoThirds} by using Theorem~\ref{thm:KomlosSarkozySzemeredi1996} below. These 
graphs $Y$ serve as `scaffolds' in step \ref{it:proofstrategy:liftingstep} which 
help confer the desired properties to the ambient graph $X$. 
\item\label{it:proofstrategy:structuralgraphtheorystep} Prove that in each case 
the subgraph $Y$ itself has its cycle space generated by its Hamilton circuits, and 
moreover that $Y$ is Hamilton connected.\footnote{The weaker property 
`any two \emph{non-adjacent} vertices are connected by a Hamilton path' would suffice 
here, but we will work with the better-known property of being Hamilton-connected.}
\item\label{it:proofstrategy:liftingstep} By adapting a lemma of 
S.~C.~Locke~\cite[Lemma~1]{MR818599} argue that the properties proved 
in \ref{it:proofstrategy:structuralgraphtheorystep} transfer from the subgraph $Y$ 
to the ambient graph $X$, thereby proving Theorem~\ref{thm:mainresults}. 
\end{enumerate}
We now explain 
\ref{it:proofstrategy:extremalgraphtheorystep}---\ref{it:proofstrategy:liftingstep} 
in more detail. 

\subsubsection{Explanation of step \ref{it:proofstrategy:extremalgraphtheorystep}}\label{8iy8y877yh7y7y7y7h879}
The theorems mentioned in \ref{it:proofstrategy:extremalgraphtheorystep} are the 
following. As to terminology, the \emph{square} $Y^2$ of a graph $Y$ is the graph 
obtained from $Y$ by adding an edge between any two vertices having distance two 
in $Y$. A graph $Y$ has \emph{bandwidth at most $b$} if and only if there exists a 
bijection $\upb\colon \upV(Y) \to \{1,\dotsc,f_0(Y)\}$ such that 
if $vv'\in\upE(Y)$, then $\lvert \upb(v) - \upb(v')  \rvert \leq b$; any 
such bijection $\upb$ is called a \emph{bandwidth-$b$-labelling of $Y$}. 
Moreover, if $Y$ is a graph, $\upb\colon\upV(Y)\to\{1,\dotsc,f_0(Y)\}$ is a 
bijection and if $(c_1,c_2)\in \Z_{\geq 1}^2$ and $\rho\in\Z_{\geq 1}$, then a map 
$h\colon \upV(Y)\to\{0,\dotsc,\rho\}$ is called 
\emph{$(c_1,c_2)$-zero-free w.r.t. $\upb$} (cf.~\cite[p.~178]{MR2448444}) if and 
only if for for every $v'\in \upV(Y)$ there exists a 
$v''\in \upb^{-1}\bigl(\{ \upb(v'),\upb(v')+1,\dotsc,\min(f_0(Y),\upb(v')+c_1)\}\bigr)$ such that  $h(v''')\neq 0$ for every 
$v'''\in\upb^{-1}\bigl(\{\upb(v''),\upb(v'')+1,\dotsc,\min(f_0(Y),\upb(v'')+c_2)\}\bigr)$. As a tool for proving Theorem~\ref{thm:mainresults} we use:

\begin{theorem}[{B{\"o}ttcher--Schacht--Taraz~\cite[Theorem~2]{MR2448444}}]\label{thm:BoettcherSchachtTaraz2009}
For every $\gamma>0$ and arbitrary $\rho\in\Z_{\geq 2}$ and $\Delta\in\Z_{\geq 2}$ 
there exist numbers $\beta=\beta(\gamma,\Delta)>0$ and $n_0=n_0(\gamma,\Delta)$ 
such that the following is true: for every graph $X$ with $f_0(X) \geq n_0$ and 
$\delta(X) \geq (\tfrac{\rho-1}{\rho}+\gamma)f_0(X)$, and for every graph $Y$ having 
$f_0(X)=f_0(Y)$, $\Delta(Y)\leq \Delta$ and $\bw(Y) \leq \beta f_0(Y)$, and 
admitting a bandwidth-$\beta f_0(Y)$-labelling 
$\upb\colon\upV(Y)\to\{1,\dotsc,f_0(Y)\}$ and a $(\rho+1)$-colouring 
$h\colon\upV(Y)\to\{0,1,\dotsc,\rho\}$ which is 
$\bigl(8\rho\beta f_0(Y), 4\rho\beta f_0(Y)\bigr)$-zero-free w.r.t. $\upb$ and 
has $\lvert h^{-1}(0)\rvert \leq \beta f_0(Y)$, there is an 
embedding $Y\hookrightarrow X$. \hfill $\Box$
\end{theorem}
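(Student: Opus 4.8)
The plan is to run the \emph{regularity method} for spanning embeddings, combining Szemer\'edi's Regularity Lemma with the Blow-up Lemma of Koml\'os--S\'ark\"ozy--Szemer\'edi, and to embed the pattern graph $Y$ into $X$ one cluster at a time. I would first fix a constant hierarchy $\beta \ll \epsilon \ll d \ll \gamma, 1/\Delta, 1/\rho$ and apply the degree form of the Regularity Lemma to $X$, obtaining an exceptional set $V_0$ with $\lvert V_0\rvert \leq \epsilon f_0(X)$ and equally-sized clusters $V_1,\dotsc,V_k$, all but at most $\epsilon\binom{k}{2}$ of whose pairs $(V_i,V_j)$ are $\epsilon$-regular. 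On the reduced graph $R$ (vertices $1,\dotsc,k$; an edge for each $\epsilon$-regular pair of density $\geq d$) the hypothesis $\delta(X)\geq(\tfrac{\rho-1}{\rho}+\gamma)f_0(X)$ becomes $\delta(R)\geq(\tfrac{\rho-1}{\rho}+\tfrac{\gamma}{2})k$.

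Since $\delta(R)$ exceeds $\tfrac{\rho-1}{\rho}k$, the Hajnal--Szemer\'edi theorem furnishes a $K_\rho$-factor of $R$, i.e.\ a partition of its clusters into $\rho$-cliques. Using the connectivity forced by the same high minimum degree I would then string these cliques together into a path-like (or cyclic) \emph{backbone}, along which the linear order of $Y$ will later be threaded; at this stage I also clean the partition, deleting a bounded proportion of vertices from each cluster to turn the relevant regular pairs into \emph{super-regular} ones, as the Blow-up Lemma requires. The chromatic parameter $\rho$ and the $(\rho-1)/\rho$-threshold conspire precisely here: the threshold is exactly what guarantees, via Hajnal--Szemer\'edi, a spanning collection of $\rho$-cliques able to host a (roughly) $\rho$-partite pattern.

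Next I would exploit the two structural features of $Y$. The bound $\bw(Y)\leq\beta f_0(Y)$ lets me cut $Y$, in the order prescribed by the labelling $\upb$, into consecutive intervals of length comparable to a cluster, so that each edge of $Y$ stays within one interval or joins two adjacent ones; these intervals are assigned in order to the cliques visited by the backbone. Within each window the colouring $h$ dictates which clique-cluster receives each vertex, so that adjacent vertices of $Y$ are sent into super-regular pairs. Colour $0$ is a wildcard: the $\bigl(8\rho\beta f_0(Y),4\rho\beta f_0(Y)\bigr)$-zero-free condition guarantees that every short window of $\upb$ contains a sub-window entirely free of colour $0$, providing clean anchor positions where the proper $\rho$-colouring is intact, so that the interval-to-clique assignment can be synchronised across consecutive cliques, while $\lvert h^{-1}(0)\rvert\leq\beta f_0(Y)$ keeps the wildcards negligible. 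The vertices of $V_0$, together with the few wildcard and interval-boundary vertices, I would embed first by a direct greedy argument, imposing image restrictions on only boundedly many cluster-vertices.

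The endgame is a delicate \emph{balancing}: the number of $Y$-vertices routed to each cluster must be made to equal that cluster's cleaned size, which I achieve by shifting a controlled number of vertices across interval boundaries --- harmless thanks to the bandwidth bound --- and then applying the Blow-up Lemma on each super-regular clique to finish the embedding $Y\hookrightarrow X$. I expect this assignment-and-balancing to be the main obstacle: one must simultaneously respect the super-regular clique structure of $R$, the adjacencies forced by $\bw(Y)$, the colour classes of $h$, the image restrictions inherited from $V_0$, and the exact cluster-size equalities demanded by the Blow-up Lemma. Reconciling all these constraints at once --- rather than any single invocation of regularity or of the Blow-up Lemma --- is the technical heart of the proof.
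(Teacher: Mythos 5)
The paper does not actually prove this statement: Theorem~\ref{thm:BoettcherSchachtTaraz2009} is quoted verbatim from B{\"o}ttcher--Schacht--Taraz \cite[Theorem~2]{MR2448444} and used as a black box (hence the $\Box$ placed directly after the statement), so there is no in-paper proof against which to measure yours. Judged on its own terms, your sketch is a faithful outline of the architecture of the published proof of the Bandwidth Theorem: Szemer{\'e}di regularity with the minimum-degree condition inherited by the reduced graph $R$, a Hajnal--Szemer{\'e}di-type $K_\rho$-factor of $R$ made possible precisely by the threshold $\tfrac{\rho-1}{\rho}$, cutting $Y$ into intervals along the bandwidth ordering, using $h$ with colour $0$ as a small wildcard class and the zero-freeness to synchronise the interval-to-clique assignment, and the Blow-up Lemma on super-regularised cliques for the endgame. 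But as written it is a plan rather than a proof: the steps you yourself identify as the technical heart --- the balancing of cluster sizes, the construction of the connecting backbone between the cliques of $R$, the treatment of the exceptional set, and the verification that the accumulated image restrictions stay within what the Blow-up Lemma tolerates --- are exactly where the original paper expends most of its effort, and none of them is carried out here. In particular the claim that the backbone can be ``strung together'' from the high minimum degree of $R$, and that the balancing can be done by ``shifting a controlled number of vertices across interval boundaries,'' each require their own lemmas with nontrivial proofs. For the purposes of the present paper, citing the result as the author does is both sufficient and the intended reading.
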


\begin{theorem}[{B{\"o}ttcher--Heinig--Taraz \cite[Theorem~3]{MR2735919}}]
\label{thm:BoettcherHeinigTaraz2010}
For every $\gamma > 0$ and every $\Delta\in\Z$ there exist numbers
$\beta=\beta(\gamma,\Delta) > 0$ and $n_0=n_0(\gamma,\Delta)\in \Z$ such 
that the following is true: for every square bipartite graph $X$ with 
$f_0(X) \geq n_0$ and $\delta(X) \geq (\frac14 + \gamma) f_0(X)$, and for 
every square bipartite graph $Y$ with $f_0(Y) = f_0(X)$, $\Delta(Y)\leq \Delta$ 
and $\bw(Y)\leq \beta f_0(Y)$, there is an embedding 
$Y\hookrightarrow X$.\hfill $\Box$
\end{theorem}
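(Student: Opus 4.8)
The plan is to prove this by the regularity--blow-up method, adapted to the bipartite setting. The crucial numerical observation is that, since $X$ is square bipartite with both classes of size $f_0(X)/2$, the hypothesis $\delta(X) \geq (\tfrac14 + \gamma) f_0(X)$ says precisely that every vertex sees at least a $(\tfrac12 + 2\gamma)$-fraction of the opposite bipartition class. This is the bipartite analogue of a Dirac-type condition, and it is here that squareness ($|A| = |B|$ for the bipartition $\upV(X) = A \sqcup B$) is indispensable: without it one could not guarantee the spanning structure in the reduced graph that the argument needs.

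First I would apply Szemer\'edi's Regularity Lemma in its bipartite form to $X$, obtaining for suitable $\varepsilon$ a regular partition that refines $A \sqcup B$ into equal-sized clusters $A_1,\dotsc,A_k$ and $B_1,\dotsc,B_k$. Passing to the bipartite reduced graph $R$---whose vertices are the clusters and whose edges are the $\varepsilon$-regular pairs of density bounded away from $0$---the minimum-degree hypothesis transfers, up to a loss of order $\varepsilon$, into the statement that every $A$-cluster is joined to more than half of the $B$-clusters and vice versa. By a bipartite Moon--Moser / Dirac-type theorem, such an $R$ contains a spanning cycle alternating between $A$-clusters and $B$-clusters; after the usual cleaning step (redistributing a bounded number of vertices), the consecutive cluster-pairs along this cycle can be made super-regular.

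Next I would invoke the bandwidth hypothesis to cut $Y$ along a bandwidth-$\beta f_0(Y)$-labelling into consecutive blocks and assign these blocks to the clusters in cyclic order so that, firstly, the two colour classes of the bipartite $Y$ are routed respectively into the $A$-clusters and the $B$-clusters of the alternating cycle, and, secondly, every edge of $Y$ has its endpoints within a single cluster-pair or within two adjacent ones---this second property being guaranteed once $\beta$ is chosen small relative to $\varepsilon$ and $k$. With such an assignment in hand, a single application of the Blow-up Lemma (with image restrictions to accommodate the interface vertices between consecutive blocks) yields the desired embedding $Y \hookrightarrow X$.

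The step I expect to be the main obstacle is the \emph{balancing}. The Blow-up Lemma demands that the number of vertices of $Y$ allotted to each cluster match that cluster's size \emph{exactly}, and simultaneously that the two colour classes of $Y$ be split correctly across the $A$- and $B$-clusters, cluster by cluster. Reconciling the purely local constraint imposed by the bandwidth---which forces vertices of $Y$ that are near in the labelling to land in nearby clusters---with this global equinumerosity requirement, while keeping both colour classes balanced within every cluster, is the delicate combinatorial heart of the proof; it is precisely here that the slack $\gamma$ and the freedom to shift the cluster boundaries along the alternating cycle must be expended.
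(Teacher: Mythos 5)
You should first note that the paper does not prove this statement at all: Theorem~\ref{thm:BoettcherHeinigTaraz2010} is imported verbatim as a black box from B{\"o}ttcher--Heinig--Taraz \cite[Theorem~3]{MR2735919} (hence the $\Box$ directly after the statement), and the present paper only \emph{uses} it, in step \ref{it:proofstrategy:extremalgraphtheorystep} of the proof of \ref{thm:minDegOneFourthPlusGammaImpliesHamiltonGeneratednessInBipartiteGraphs}. So there is no in-paper proof to compare against. That said, your outline does follow the strategy actually used in the cited reference: the translation of $\delta(X)\geq(\tfrac14+\gamma)f_0(X)$ into a `more than half of the opposite class' condition (where squareness is indeed essential), the regularity lemma, a Hamilton cycle in the balanced bipartite reduced graph via a Moon--Moser/Dirac-type theorem, super-regularization along that cycle, cutting $Y$ into consecutive blocks along the bandwidth ordering, and a final application of the blow-up lemma with image restrictions. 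As a description of the architecture of the known proof, this is accurate.

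As a \emph{proof}, however, it has a genuine gap, and you have located it yourself: the balancing step is named as ``the main obstacle'' but not carried out. This is not a routine detail. One must (i) distribute the vertices of $Y$ so that each cluster receives \emph{exactly} its size, (ii) keep the two colour classes of $Y$ correctly split between the $A$-clusters and $B$-clusters of the alternating cycle for \emph{every} cluster pair, and (iii) do all this while respecting the bandwidth constraint that forces consecutive blocks of $Y$ into consecutive cluster pairs. In the bipartite setting there is an additional parity obstruction absent from the non-bipartite B{\"o}ttcher--Schacht--Taraz argument: a block of $Y$ assigned to a cluster pair need not split evenly between the two colour classes, and these local imbalances must be corrected by exploiting the $\gamma$-slack in the degrees (extra edges of the reduced graph beyond the Hamilton cycle) to reroute a small number of vertices. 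Without an explicit mechanism for this correction --- in \cite{MR2735919} it occupies a dedicated ``lemma for $H$'' and ``lemma for $G$'' pair --- the argument does not close. So the proposal is a correct roadmap of the external proof, but the step it defers is precisely the one that carries the real work.
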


Moreover, the lower bound of terrestrial magnitude that is provided 
in \ref{thm:minDegTwoThirds} depends on a very recent theorem of 
P.~Ch{\^a}u, L.~DeBiasio and H.~A.~Kierstead (who say 
\cite[p.~17, Section~5, l.~5]{CHAUDEBIASIOKIERSTEAD} that by optimizing their proof 
one may not push the bound further down than to about $n_0 = 10^5$, but who 
nevertheless express optimism as to the possibility of getting rid of the 
$f_0$-condition altogether by some new graph-theoretical methods): 

\begin{theorem}[{Koml{\'o}s--S\'{a}rk{\"o}zy--Szemer{\'e}di \cite[Theorem~1]{MR1611764},  Jamshed \cite[Chapter 3]{JAMSHEDembeddingspanningsubgraphsintolargedensegraphs}; explicit lower bound on $f_0$ proved by Ch{\^a}u--DeBiasio--Kierstead \cite[Theorem~7]{CHAUDEBIASIOKIERSTEAD}}]\label{thm:KomlosSarkozySzemeredi1996}
For every graph $X$ with $f_0(X) \geq 2 \cdot 10^8$ and 
$\delta(X)\geq \frac23 f_0(X)$ there exists an embedding 
$\upC_{f_0(X)}^2\hookrightarrow X$.\hfill $\Box$
\end{theorem}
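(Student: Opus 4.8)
The statement is exactly the assertion that a graph of minimum degree at least $\tfrac23 n$ (with $n:=f_0(X)$) contains the square of a Hamilton cycle, i.e. the P{\'o}sa--Seymour conjecture in the case of squares; since the Regularity/Blow-up method underlying the original Koml{\'o}s--S{\'a}rk{\"o}zy--Szemer{\'e}di proof yields only tower-type $n_0$, the plan is to run a purely combinatorial absorbing argument and to split into a non-extremal and an extremal case so that $n_0$ stays terrestrial. Everything rests on one elementary consequence of the hypothesis: any two vertices $u,v$ satisfy $\lvert \upN_X(u)\cap\upN_X(v)\rvert \geq 2\delta(X)-n \geq \tfrac13 n$. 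This lets one extend a square path $\dotsc xy$ by any $z\in\upN_X(x)\cap\upN_X(y)$, since then $\dotsc xyz$ is again a square path and there are at least $\tfrac13 n$ choices for $z$; iterating gives a greedy procedure for growing long square paths, and the same common-neighbourhood bound underlies a \emph{connecting lemma} stating that two vertex-disjoint ordered end-pairs can be joined by a short square path through the currently unused vertices.

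First I would establish the connecting lemma together with a greedy covering lemma: by always retaining a reservoir of unused vertices (so that $\upN_X(x)\cap\upN_X(y)$ keeps meeting the reservoir) one builds a square path covering all but a tiny linear fraction of $\upV(X)$, with prescribed end-pairs. Next comes the absorbing structure. The key local fact is that to insert a single vertex $v$ into a square path between consecutive vertices $x_i$ and $x_{i+1}$ it suffices that $v$ be adjacent to the four consecutive vertices $x_{i-1},x_i,x_{i+1},x_{i+2}$; since $\lvert\upN_X(v)\rvert\geq\tfrac23 n$, a short random square path contains such an adjacent quadruple for $v$ with constant probability. I would therefore select, by a probabilistic argument, a family of vertex-disjoint \emph{absorber gadgets} --- short square paths each carrying a free insertion slot for its designated vertex --- and link them through the connecting lemma into a single \emph{absorbing square path} $P$ with the property that for every sufficiently small $S\subseteq\upV(X)$ there is a square path on $\upV(P)\cup S$ having the same two end-pairs as $P$.

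Assembling the pieces is then routine: reserve $P$, apply the covering lemma to the rest down to a leftover set $S$ smaller than the absorbing capacity, absorb $S$ into $P$, and finally close the two end-pairs of the resulting spanning square path into a square cycle by one more application of the connecting lemma. This exhibits $\upC_{n}^2\hookrightarrow X$.

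The hard part will be the explicit constant $n_0=2\cdot 10^8$. The clean absorbing argument is robust only when $X$ is \emph{far} from the extremal configuration that forces the threshold $\tfrac23 n$; near that configuration the reservoirs and common neighbourhoods degenerate and the probabilistic selection loses all slack. I would therefore run a stability dichotomy: fixing a small $\epsilon>0$, if $X$ is $\epsilon$-far from the extremal example carry out the absorbing argument with every constant tracked by hand (no Regularity Lemma, so the dependence on $\epsilon$ is polynomial rather than tower-type), while if $X$ is $\epsilon$-close abandon absorption and construct $\upC_n^2$ directly, exploiting the rigid near-extremal partition structure. Balancing the two regimes --- shrinking the non-extremal constants enough while keeping the extremal analysis self-contained --- is precisely the quantitative bookkeeping that pins $n_0$ down, and it is where essentially all of the work, and the improvement of Ch{\^a}u--DeBiasio--Kierstead over Koml{\'o}s--S{\'a}rk{\"o}zy--Szemer{\'e}di, resides.
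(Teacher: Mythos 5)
The first thing to say is that the paper does not prove this statement at all: it is imported verbatim as an external black box (note the terminal $\Box$ and the attribution to Koml{\'o}s--S{\'a}rk{\"o}zy--Szemer{\'e}di for the asymptotic statement and to Ch{\^a}u--DeBiasio--Kierstead for the explicit bound $f_0\geq 2\cdot 10^8$), and it is then fed into step \ref{it:proofstrategy:extremalgraphtheorystep} of the proof of \ref{thm:minDegTwoThirds}. So there is no in-paper argument to compare yours against; what you have written is an outline of how the \emph{cited} authors prove it, and indeed your plan (absorbing method plus a non-extremal/extremal dichotomy, avoiding the Regularity Lemma to escape tower-type constants) is the correct description of that literature.

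As a proof, however, your proposal has a genuine gap: essentially every step that carries quantitative weight is announced rather than carried out. The connecting lemma is not a formal consequence of $\lvert \upN_X(u)\cap\upN_X(v)\rvert\geq \tfrac13 f_0(X)$ --- extending a square path by one vertex only requires hitting the common neighbourhood of the current end-pair, but \emph{steering} the path so that it terminates in a prescribed ordered pair requires an iterated counting (or bipartite-density) argument whose constants are exactly what determine the reservoir sizes, and you give none of it. Likewise the absorber selection ``with constant probability,'' the capacity of the absorbing path, and above all the entire extremal case are left as labels; you yourself concede that ``essentially all of the work'' resides there. Since the theorem as stated is precisely the quantitative claim with the concrete constant $2\cdot 10^8$, a proof that defers all the bookkeeping does not establish it. If the goal is only to use the theorem the way the paper does, the correct move is to cite \cite{CHAUDEBIASIOKIERSTEAD} as the paper does; if the goal is to reprove it, the connecting lemma, the absorber analysis, and the extremal case each need to be written out with explicit constants before the bound $2\cdot 10^8$ can legitimately appear in the statement.
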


Whereas for \ref{thm:minDegTwoThirds} our use of 
Theorem~\ref{thm:KomlosSarkozySzemeredi1996} dictates employing $\upC_{f_0(\cdot)}^2$ 
as the auxiliary subgraph, there are choices to be made as to what subgraph to 
employ from the set of spanning subgraphs offered by the 
Theorems~\ref{thm:BoettcherSchachtTaraz2009} and \ref{thm:BoettcherHeinigTaraz2010}. 
We will choose to use the following graphs (in Definition~\ref{def:bipCyclicLadder} 
let $b_r:=b_0$):  

\begin{definition}[Bipartite cyclic ladder]\label{def:bipCyclicLadder}
For $r\in \Z_{\geq 3}$ let $\CL_r$ be the bipartite graph 
with $\upV(\CL_r)$ $:=$ $\{a_0,\dotsc,a_{r-1}\} \sqcup \{b_0,\dotsc,b_{r-1}\}$ 
and $\upE(\CL_r)$ $:=$ $\bigsqcup_{i = 0}^{r-1}$  $\{ a_i b_{i-1} \}$ $\sqcup$ 
$\bigsqcup_{i = 0}^{r-1}$ $\{ a_i b_i \}$ $\sqcup$ 
$\bigsqcup_{i = 0}^{r-1}$ $\{ a_ib_{i+1} \}$. 
\end{definition}

\begin{definition}[{prism, M{\"o}bius ladder}]\label{def:squareofcircuitprismmobiusladder} 
For every $n\geq 3$ and $r\geq 3$ let (where $v_n:=v_0$, $x_r:=x_0$ and $y_r:=y_0$)
the \emph{prism} $\Pr_r$ be defined by 
$\upV(\Pr_r)$ $:=$ $\{  x_0, \dotsc, x_{r-1}, y_0, \dotsc, y_{r-1}\}$ and
$\upE(\Pr_r)$ $:=$ $\bigsqcup_{i=0}^{r-1}$ $\{$ $x_i x_{i+1}$ $\}$ 
$\sqcup$ $\bigsqcup_{i=0}^{r-1}$ $\{$  $y_iy_{i+1}$ $\}$ $\sqcup$ 
$\bigsqcup_{i=0}^{r-1}$ $\{$  $x_iy_i$ $\}$, and the \emph{M{\"o}bius ladder} 
$\upM_r$ be defined by $\upV(\upM_r)$ $:=$ $\upV(\Pr_r)$ and 
$\upE(\upM_r)$ $:=$ $\bigl($ $\upE(\Pr_r)$ $\setminus$ 
$\{$ $x_0x_{r-1},$ $y_0y_{r-1}$ $\}$ $\bigr)$ $\sqcup$ 
$\{$ $x_0y_{r-1},$ $y_0x_{r-1}$ $\}$.
\end{definition}

\begin{definition}[{$\Pr_r^{\boxtimes}$ and $\upM_r^{\boxtimes}$}]
\label{8767856456576878787}
For every $r\geq 3$ let $\Pr_r^{\boxtimes}$ be defined by 
$\upV(\Pr_r^{\boxtimes}) := \upV(\Pr_r) \sqcup \{z\}$,  with $z$ some new element, 
and $\upE(\Pr_r^{\boxtimes})$ $:=$ $\upE(\Pr_r)$ $\sqcup$ 
$\{$ $zx_0,$ $zy_0,$ $zx_1,$ $zy_1$ $\}$. Let $\upM_r^{\boxtimes}$ be defined by 
$\upV(\upM_r^{\boxtimes}) := \upV(\Pr_r^{\boxtimes})$ and $\upE(\upM_r^{\boxtimes})$ $:=$ 
$($ $\upE(\Pr_r^{\boxtimes})$ $\setminus$ $\{$ $x_0x_{r-1},$ $y_0y_{r-1}$ $\}$ $)$ 
$\sqcup$ $\{$ $x_0y_{r-1},$ $y_0x_{r-1}$ $\}$. 
\end{definition}

\begin{definition}[{$\Pr_r^{\boxminus}$ and $\upM_r^{\boxminus}$}]
\label{463671341124867316783467813}
For every $r\geq 3$ let $\Pr_r^{\boxminus}$ be defined by 
$\upV(\Pr_r^{\boxminus}) := \upV(\Pr_r)\sqcup \{z',z''\}$ with $z'$ and $z''$ two new 
elements, $\upE(\Pr_r^{\boxminus})$ $:=$ $\upE(\Pr_r)$ $\sqcup$ 
$\{$ $x_0 z',$ $y_0 z',$ $x_0 z'',$ $x_1 z'',$ $y_1 z'',$ $z' z''$ $\}$. Let 
$\upM_r^{\boxminus}$ be defined by $\upV(\upM_r^{\boxminus}) := \upV(\Pr_r^{\boxminus})$ 
and $\upE(\upM_r^{\boxminus})$ $:=$ $\left(\upE(\Pr_r^{\boxminus})\setminus
\{x_0x_{r-1},y_0y_{r-1}\}\right)\sqcup\{x_0y_{r-1},y_0x_{r-1}\}$. 
\end{definition}

\begin{figure} 
\begin{center}
\input{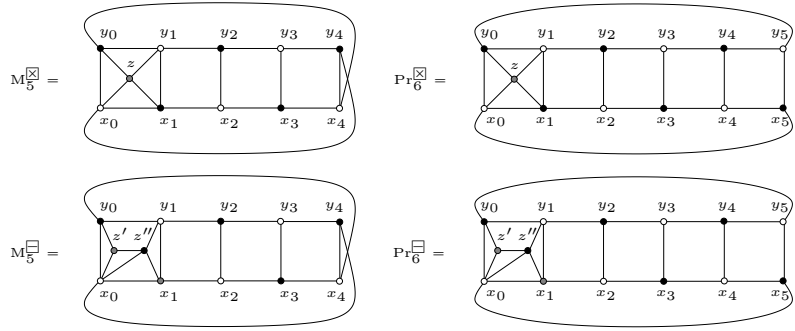}
\caption{The graphs $\upM_r^{\boxtimes}$ and $\upM_r^{\boxminus}$ for odd $r$, 
and $\Pr_r^{\boxtimes}$ and $\Pr_r^{\boxminus}$ for even $r$ play a key role in the 
proof. These are bounded-degree, bounded-bandwidth and $3$-chromatic graphs 
admitting a $3$-colouring with a constant-sized third colour class. 
The B{\"o}ttcher--Schacht--Taraz-theorem in its full form 
\cite[Theorem~2]{MR2448444} is sufficiently general to guarantee the existence of 
embeddings of these graphs as \emph{spanning} subgraphs into graphs $X$ with 
$\delta(X)\geq (\tfrac12+\gamma)f_0(X)$. If $\upM_r^{\boxtimes}$ or $\Pr_r^{\boxtimes}$ 
spanningly embed into $X$, this implies that $\upZ_1(X;\Z/2)$ is generated by 
Hamilton circuits. If $\upM_r^{\boxminus}$ or $\Pr_r^{\boxminus}$ spanningly embed 
into $X$ this implies that $\upZ_1(X;\Z/2)$ is generated by the circuits having 
lengths in $\{f_0(X)-1,f_0(X)\}$. If the edge $x_0 z''$ were omitted from 
$\upM_r^{\boxminus}$ or $\Pr_r^{\boxminus}$, the remaining graph could no longer serve 
the purpose these graphs have in the present paper.}
\label{u7yt6r54e4e4ttf5drdr4rdxrdx}
\end{center}
\end{figure}

Justifying that $\CL_r$ is indeed one of the subgraphs guaranteed by 
Theorem~\ref{thm:BoettcherHeinigTaraz2010} will pose no difficulty and can be done 
uniformly for every $r\in\Z_{\geq 3}$. Matters are being complicated by parity 
issues when it comes to step \ref{it:proofstrategy:structuralgraphtheorystep}. We 
will later make essential use of the following sets. 

\begin{definition}\label{90987564224567898787878777899831323}
For every even $r\geq 4$ we define the sets of edge sets 
{\small 
\begin{enumerate}[label={\rm(P.$\boxtimes$.ES.\arabic{*})},leftmargin=7em] 
\item\label{425486446546434454567565454548} 
$\mathcal{CB}_{\Pr_r^{\boxtimes}}^{(1)}$ $:=$ 
$\left\{ 
\begin{matrix} 
C_{\ev,r,1} & := 
& z y_1x_1x_2y_2y_3\dotsc x_{r-2}y_{r-2}y_{r-1}x_{r-1}x_0y_0 z\quad ,\\
C_{\ev,r,2} & := 
& z x_1x_2 y_2y_3\dotsc  x_{r-2}y_{r-2}y_{r-1}x_{r-1}x_0y_0y_1 z\quad ,\\
C_{\ev,r,3} & := 
& z x_1y_1y_2x_2x_3\dotsc x_{r-2}x_{r-1}y_{r-1}y_0x_0 z\quad ,\\
C_{\ev,r,4} & := 
& z x_0x_1y_1y_2\dotsc y_{r-3}y_{r-2}x_{r-2}x_{r-1}y_{r-1}y_0 z\quad ,\\
C_{\ev,r,5} & := 
& z y_1y_2x_2x_3\dotsc x_{r-2}x_{r-1}y_{r-1}y_0x_0x_1 z\quad 
\end{matrix}\right\}$\quad ,
\item\label{867554134354675546485657563365} 
$\mathcal{CB}_{\Pr_r^{\boxtimes}}^{(2)}$ $:=$ 
$\left\{ \begin{matrix} 
C_{\ev,r}^{x_1y_1} & := 
& z x_0x_{r-1}x_{r-2}\dotsc x_2y_2y_3\dotsc y_{r-1}y_0y_1x_1 z\quad ,\\
C_{\ev,r}^{x_2y_2} & := 
& z x_0x_{r-1}x_{r-2}\dotsc x_3y_3y_4\dotsc y_{r-1}y_0y_1y_2x_2x_1 z\quad ,\\
& \vdots &  \\
C_{\ev,r}^{x_{r-2}y_{r-2}} & := 
& z x_0x_{r-1}y_{r-1}y_0y_1\dotsc y_{r-2}x_{r-2}x_{r-3}\dotsc x_1 z\quad ,\\
C_{\ev,r}^{x_{r-1}y_{r-1}} & := 
& z x_0x_1\dotsc x_{r-1}y_{r-1}y_{r-2}\dotsc y_0 z\quad 
\end{matrix}\right\}$ \quad .
\end{enumerate}
}
\end{definition}

The set $C_{\ev,r}^{x_{r-1}y_{r-1}}$ does not follow the pattern to be found in 
$C_{\ev,r}^{x_1y_1}$, $\dotsc$, $C_{\ev,r}^{x_{r-2}y_{r-2}}$. 

\begin{definition}\label{897564676522229897678687876876}
For every odd $r\geq 5$ we define the sets of edge sets 
{\small 
\begin{enumerate}[label={\rm(M.$\boxtimes$.ES.\arabic{*})},leftmargin=7em] 
\item\label{89845242425699876554213343443434} 
$\mathcal{CB}_{\upM_r^{\boxtimes}}^{(1)}$ $:=$ 
$\left\{ 
\begin{matrix} 
C_{\od,r,1} & := 
& z y_1x_1x_2y_2y_3\dotsc y_{r-2}x_{r-2}x_{r-1}y_{r-1}x_0y_0 z\quad ,\\
C_{\od,r,2} & := 
& z x_1x_2 y_2y_3\dotsc y_{r-2}x_{r-2}x_{r-1}y_{r-1}x_0y_0y_1 z\quad ,\\
C_{\od,r,3} & := 
& z x_1y_1y_2x_2x_3\dotsc y_{r-2}y_{r-1}x_{r-1}y_0x_0 z\quad ,\\
C_{\od,r,4} & := 
& z x_0x_1y_1y_2\dotsc x_{r-3}x_{r-2}y_{r-2}y_{r-1}x_{r-1}y_0 z\quad ,\\
C_{\od,r,5} & := 
& z y_1y_2x_2x_3\dotsc y_{r-2}y_{r-1}x_{r-1}y_0x_0x_1 z\quad 
\end{matrix}\right\}$\quad ,
\item\label{65534545456766765656554008976756565} 
$\mathcal{CB}_{\upM_r^{\boxtimes}}^{(2)}$ $:=$ 
$\left\{ 
\begin{matrix} 
C_{\od,r}^{x_1y_1} & := 
& z x_0y_{r-1}y_{r-2}\dotsc y_2x_2x_3\dotsc x_{r-1}y_0y_1x_1 z\quad ,\\
C_{\od,r}^{x_2y_2} & := 
& z x_0y_{r-1}y_{r-2}\dotsc y_3x_3x_4\dotsc x_{r-1}y_0y_1y_2x_2x_1 z\quad ,\\
& \vdots &  \\
C_{\od,r}^{x_{r-2}y_{r-2}} & := 
& z x_0y_{r-1}x_{r-1}y_0y_1\dotsc y_{r-2}x_{r-2}x_{r-3}\dotsc x_1 z\quad ,\\
C_{\od,r}^{x_{r-1}y_{r-1}} & := 
& z x_0x_1\dotsc x_{r-1}y_{r-1}y_{r-2}\dotsc y_0 z\quad 
\end{matrix}\right\}$ \quad .
\end{enumerate}
}
\end{definition}

Again, the set $C_{\od,r}^{x_{r-1}y_{r-1}}$ does not conform to the pattern to be found in  
$C_{\od,r}^{x_1y_1}$, $\dotsc$, $C_{\od,r}^{x_{r-2}y_{r-2}}$. 

\begin{definition}\label{988537675343454677887987987987}
For every even $r\geq 4$ we define the sets of edge sets 
{\small
\begin{enumerate}[label={\rm(P.$\boxminus$.ES.\arabic{*})},leftmargin=7em] 
\item\label{9899765435767548946635545443} 
$\mathcal{CB}_{\Pr_r^{\boxminus}}^{(1)}$ $:=$
$\left\{\begin{matrix}
C_{\boxminus,\ev,r,1} & := & 
z'x_0z''x_1x_2\dotsc x_{r-1}y_{r-1}y_{r-2}\dotsc y_0z' \quad ,\\
C_{\boxminus,\ev,r,2} & := & 
z'z''x_0x_{r-1}x_{r-2}\dotsc x_1y_1y_2\dotsc y_{r-1}y_0z'\quad ,\\
C_{\boxminus,\ev,r,3} & := & 
z'x_0z''x_1y_1y_2x_2x_3\dotsc x_{r-2}x_{r-1}y_{r-1}y_0z'\quad ,\\
C_{\boxminus,\ev,r,4} & := & 
z'z''x_1x_2\dotsc x_{r-1}y_{r-1}y_{r-2}\dotsc y_0x_0z'\quad ,\\
C_{\boxminus,\ev,r,5} & := & 
z'x_0x_{r-1}y_{r-1}y_{r-2}x_{r-2}x_{r-3}\dotsc x_2x_1z''y_1y_0z'
\end{matrix}\right\}$\quad ,
\item\label{76645565667786786676655009877} 
$\mathcal{CB}_{\Pr_r^{\boxminus}}^{(2)}$ $:=$ 
$\left\{ \begin{matrix} 
C_{\boxminus,\ev,r}^{x_1y_1} & := & 
z'x_0x_{r-1}x_{r-2}\dotsc x_2y_2y_3\dotsc y_{r-1}y_0y_1x_1z'' z'\quad ,\\
C_{\boxminus,\ev,r}^{x_2y_2} & := & 
z'x_0x_{r-1}x_{r-2}\dotsc x_3y_3y_4\dotsc y_{r-1}y_0y_1y_2x_2x_1z''z'\quad ,\\
& \vdots &  \\
C_{\boxminus,\ev,r}^{x_{r-2}y_{r-2}} & := & 
z'x_0x_{r-1}y_{r-1}y_0y_1\dotsc y_{r-2}x_{r-2}x_{r-3}\dotsc x_1z''z'\quad ,\\
C_{\boxminus,\ev,r}^{x_{r-1}y_{r-1}} & := & 
z'z''x_0x_1\dotsc x_{r-1}y_{r-1}y_{r-2}\dotsc y_0z'\quad 
\end{matrix}\right\}$ \quad .
\end{enumerate}
}
\end{definition}

\begin{definition}\label{987876676678787877676509876543}
For every odd $r\geq 5$ we define the sets of edge sets 
{\small
\begin{enumerate}[label={\rm(M.$\boxminus$.ES.\arabic{*})},leftmargin=7em] 
\item\label{98187655434547887654435465778977665544334} 
$\mathcal{CB}_{\upM_r^{\boxminus}}^{(1)}$ $:=$
$\left\{\begin{matrix}
C_{\boxminus,\od,r,1} & := & z'x_0z''x_1x_2\dotsc x_{r-1}y_{r-1}y_{r-2}\dotsc y_0 z'\  = \ 
C_{\boxminus,\ev,r,1} \quad ,\\
C_{\boxminus,\od,r,2} & := & 
z'z''x_0y_{r-1}y_{r-2}\dotsc y_1x_1x_2\dotsc x_{r-1}x_0z' \quad ,\\
C_{\boxminus,\od,r,3} & := & 
z'x_0z''x_1y_1y_2x_2x_3y_3\dotsc y_{r-2}y_{r-1}x_{r-1}x_0z' \quad ,\\
C_{\boxminus,\od,r,4} & := & 
z'z''x_1x_2\dotsc x_{r-1}y_{r-1}y_{r-2}\dotsc y_0x_0z'\ =\ C_{\boxminus,\ev,r,4} \quad ,\\
C_{\boxminus,\od,r,5} & := & z'x_0y_{r-1}x_{r-1}x_{r-2}y_{r-2}y_{r-3}\dotsc x_2x_1z''y_1y_0z'
\end{matrix}\right\}$\quad ,
\item\label{9198786653454809958487332626263} 
$\mathcal{CB}_{\upM_r^{\boxminus}}^{(2)}$ $:=$ $\left\{ \begin{matrix} 
C_{\boxminus,\od,r}^{x_1y_1} & := & 
z'x_0y_{r-1}y_{r-2}\dotsc y_2x_2x_3\dotsc x_{r-1}y_0y_1x_1z'' z'\quad ,\\
C_{\boxminus,\od,r}^{x_2y_2} & := & 
z'x_0y_{r-1}y_{r-2}\dotsc y_3x_3x_4\dotsc x_{r-1}y_0y_1y_2x_2x_1z'' z'\quad ,\\
& \vdots &  \\ 
C_{\boxminus,\od,r}^{x_{r-2}y_{r-2}} & := & 
z'x_0y_{r-1}x_{r-1}y_0y_1\dotsc y_{r-2}x_{r-2}x_{r-3}\dotsc x_1z''z' \quad ,\\
C_{\boxminus,\od,r}^{x_{r-1}y_{r-1}} & := & 
z'z''x_0x_1\dotsc x_{r-1}y_{r-1}y_{r-2}\dotsc y_0z'\ =\ C_{\boxminus,\ev,r}^{x_{r-1}y_{r-1}}\quad 
\end{matrix}\right\}$ \quad  .
\end{enumerate}
}
\end{definition}

\subsubsection{Explanation of step \ref{it:proofstrategy:structuralgraphtheorystep}}\label{subsubsection:explanationofthestepdealingwiththeauxiliarystructures}

If $G$ is a finite abelian group in additive notation, and $0\notin S\subseteq G$ 
has the property that $-S:=\{ -s\colon s\in S\} = S$, then we write 
$\langle S \rangle := \sum_{s\in S} \Z s$ for the abelian group generated by $S$ and 
define a graph $X:=\Cay(\langle S \rangle ; S)$ by $\upV(X):=\langle S\rangle$ and 
$\{a,b\}\in \upE(X)$ $:\Leftrightarrow$ $a-b\in S$, called the \emph{Cayley graph} 
associated to $G$ and $S$. The following theorem of C.~C.~Chen and N.~F.~Quimpo has 
proved to be fertile for the theory of Cayley graphs on finite abelian groups: 

\begin{theorem}[{Chen--Quimpo; \cite[Theorem~4]{MR641233} gives the 
non-bipartite case.\footnote{The bipartite case appears to be susceptible to analogous 
arguments as in \cite{MR641233}. The author does not know of any published proof 
of the bipartite case. Nevertheless, it is mentioned 
in \cite[Theorem~1.4]{MR1848324}, \cite[Theorem~1.7]{MR2592513}, 
\cite[Introductory Remarks and Proposition~2.1]{MIKLAVICSPARLhamiltoncycleandhamiltonpathextendability2011} and \cite[Proposition~3]{MIKLAVICSPARLonextendabilityofcayleygraphs2009}. Moreover, what little we need of the general bipartite case, namely 
Lemma~\ref{lem:generatingpropertiesofauxiliarestructures}.\ref{lem:it:bipartitecyclicladder:hamiltongenerated}, can be easily shown directly.}}]\label{thm:ChenQuimpo} 
For every finite abelian group $G$ and every $S\subseteq G$ with $-S = S$ and 
$\lvert S \rvert \geq 3$ the graph $X = \Cay(\langle S \rangle;S)$ is 
Hamilton-connected in case $X$ is not bipartite, and Hamilton-laceable 
in case $X$ is bipartite.\hfill $\Box$
\end{theorem}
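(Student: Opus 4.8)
The plan is to proceed by induction on $\lvert S\rvert$, exploiting that $X=\Cay(\langle S\rangle;S)$ is vertex-transitive: left translation by any element of $\langle S\rangle$ is an automorphism acting transitively on $\upV(X)$, so it suffices to produce, for the identity $0$ and every other relevant vertex $g$, a Hamilton path from $0$ to $g$ --- in the bipartite case only for those $g$ lying in the bipartition class opposite to that of $0$. When $X$ is bipartite that bipartition is induced by the unique homomorphism $\langle S\rangle\to\Z/2$ sending every element of $S$ to $1$, so each generator flips the class; this makes the parity bookkeeping below transparent and explains why laceability, rather than connectedness, is the correct target in the bipartite case.

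First I would fix a generator $s\in S$ and set $K:=\langle S\setminus\{s,-s\}\rangle$. If $K=\langle S\rangle$, then $s$ is redundant and $\Cay(\langle S\rangle;S\setminus\{s,-s\})$ is a spanning subgraph with fewer generators which is already Hamilton-connected (resp.\ Hamilton-laceable) by induction; adding back the remaining edges preserves this, so I may assume $K\subsetneq\langle S\rangle$. Then the cosets of $K$ partition $\upV(X)$ into $t:=[\langle S\rangle:K]\geq 2$ \emph{layers}, cyclically arranged because $\langle S\rangle/K$ is cyclic and generated by the image of $s$. The generators in $S':=S\setminus\{s,-s\}$ act inside each layer, making it a copy of $\Cay(K;S')$, while $s$ and $-s$ induce a perfect matching of $\lvert K\rvert$ edges between consecutive layers; going around the layer-cycle once shifts a layer by $ts\in K$ within itself, so $X$ is a \emph{twisted} product of $\Cay(K;S')$ with a cycle --- precisely a prism-like or M\"obius-ladder-like graph, which is why these families are pertinent here. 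Whenever $\lvert S'\rvert\geq 3$ the layers are Hamilton-connected (resp.\ -laceable) by induction, since $\lvert S'\rvert<\lvert S\rvert$.

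The degenerate configurations --- those in which every admissible choice of $s$ forces $\lvert S'\rvert\leq 2$, so that each layer is merely a cycle, an edge, or a point --- must be settled separately and carry most of the case-analysis. Here $X$ is a circulant graph of small connection set, or a prism $\Pr_r$, a M\"obius ladder $\upM_r$, or a twisted product of two cycles, and one checks Hamilton-connectedness (resp.\ -laceability) directly. Note that a cycle is not Hamilton-connected, so in this regime one genuinely cannot fall back on layerwise Hamilton-connectivity and has to build the paths explicitly.

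The heart of the argument, and the step I expect to be the main obstacle, is to splice the layerwise Hamilton paths into a single global Hamilton path from $0$ to a target $g$ in an arbitrary layer. Naively snaking through $L_0\to L_1\to\dotsb$ and using the Hamilton-connectivity of each layer to route between its two matching-ports only lets one terminate in a layer adjacent to $L_0$ along the layer-cycle; for a target in a \emph{far} layer one must detour. Exploiting that there are $\lvert K\rvert$ parallel matching edges --- not just one --- between neighbouring layers, one dips into an otherwise-skipped arc of layers, covers it by a sub-path, and returns, all while using every vertex exactly once. Making these detours consistent around the whole cyclic arrangement, and simultaneously controlling in the bipartite case the parity of the number of matching-crossings so that $0$ and $g$ end in opposite classes (as laceability demands), is the delicate combinatorial core; the freedom to choose, inside each layer, a Hamilton path between any prescribed pair of opposite-class ports --- exactly what the inductive laceability of the layer supplies --- is what makes the splicing go through.
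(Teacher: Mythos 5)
First, a point of reference: the paper does not prove Theorem~\ref{thm:ChenQuimpo} at all --- it is quoted from Chen--Quimpo \cite{MR641233} for the non-bipartite case, with the bipartite case flagged in the footnote as folklore, so there is no in-paper proof to compare yours against. Judged on its own terms, what you have written is a strategy outline in the spirit of the original induction-on-$\lvert S\rvert$ argument, not a proof. You yourself identify the splicing of layerwise Hamilton paths into a global one as ``the main obstacle'' and ``the delicate combinatorial core,'' and you identify the degenerate configurations as carrying ``most of the case-analysis'' --- and then neither is carried out. Those two items are essentially the entire content of the theorem; everything you do execute (vertex-transitivity, the coset layering, the bipartition homomorphism) is the routine setup.

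Beyond the acknowledged omissions, the induction as you have set it up breaks at several specific points. (i) In the reduction ``if $K=\langle S\rangle$ then $s$ is redundant,'' deleting $\{s,-s\}$ can turn a non-bipartite $X$ into a bipartite spanning subgraph, in which case the induction hypothesis yields only Hamilton-laceability of the subgraph --- strictly weaker than the Hamilton-connectedness you need for $X$; it can also drop $\lvert S'\rvert$ below $3$, outside the scope of the hypothesis. (ii) In the layered case, when $X$ is non-bipartite but each layer $\Cay(K;S')$ is bipartite, induction only supplies Hamilton paths between \emph{opposite-class} ports of a layer, so the splicing must respect a parity constraint in every layer simultaneously while still reaching an arbitrary target $g$; this is exactly where Chen--Quimpo expend their effort (via explicit lemmas on grids and prisms), and asserting that ``the freedom to choose \ldots is what makes the splicing go through'' does not establish it. (iii) The twist by $ts\in K$ around the layer-cycle, which distinguishes $\Pr_r$-like from $\upM_r$-like global structure, changes which port pairs are reachable and is not addressed in the splicing discussion. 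Until the splicing lemma is stated and proved and the small-$\lvert S'\rvert$ cases are checked, this remains a plausible plan rather than a proof.
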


We will use the following theorem of B.~Alspach, S.~C.~Locke and D.~Witte which 
appears to be the first result in the literature dealing with linear algebraic 
properties of Hamilton circuits (as to terminology, a graph $X$ is called a 
\emph{prism over the graph $Y$} if and only if $X\cong Y\square \upP_1$): 

\begin{theorem}[{Alspach--Locke--Witte \cite[Theorem~2.1 and Corollary~2.3]{MR1057481}}]\label{thm:AlspachLockeWitte} 
For every finite abelian group $G$ and every $0\notin S \subseteq G$ with $-S=S$ 
the graph $X:=\Cay(\langle S \rangle ; S)$ has the following properties: 
\begin{enumerate}[label={\rm(\arabic{*})}]
\item\label{thm:alspachlockewitte:cayleygraphbipartite} 
if $X$ is bipartite, then $\mathcal{H}(X)$ generates $\upZ_1(X;\Z/2)$ \quad ,
\item\label{thm:alspachlockewitte:numberofverticesodd} 
if $\lvert X \rvert = \lvert \langle S \rangle \rvert$ is odd, 
then $\mathcal{H}(X)$ generates $\upZ_1(X;\Z/2)$ \quad ,
\item\label{thm:alspachlockewitte:whenthecodimensionisone} 
if $\lvert X \rvert = \lvert \langle S \rangle \rvert$ is even and $X$ is not 
bipartite and not a prism over any circuit of odd length, then 
$\dim_{\Z/2}\bigl ( \upZ_1(X;\Z/2) / \langle\mathcal{H}(X)\rangle_{\Z/2}\bigr ) = 1$ 
\quad .\hfill $\Box$
\end{enumerate}
\end{theorem}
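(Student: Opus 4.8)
The plan is to isolate a single linear ``parity'' obstruction and then to show that, apart from it, the Hamilton circuits lift a convenient generating set of short cycles; the engine for the lifting will be Theorem~\ref{thm:ChenQuimpo}. Write $A:=\langle S\rangle$ and $n:=\lvert A\rvert$. First I would dispose of the degenerate cases $\lvert S\rvert\leq 2$ (where $X$ is a single edge, a perfect matching or a single circuit, and all three assertions are immediate), so that Theorem~\ref{thm:ChenQuimpo} applies and makes $X$ Hamilton-connected when it is not bipartite and Hamilton-laceable when it is bipartite --- this is the only structural fact about $X$ that I will use. Next I would record the obstruction: the map $\phi\colon\upZ_1(X;\Z/2)\to\Z/2$, $\phi(c):=\lvert\Supp(c)\rvert\bmod 2$, is $\Z/2$-linear, is nonzero exactly when $X$ has an odd circuit (i.e.\ is non-bipartite), and takes every Hamilton circuit to $n\bmod 2$ because a Hamilton circuit has $n$ edges. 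Hence when $n$ is even and $X$ is non-bipartite we get $\langle\mathcal{H}(X)\rangle_{\Z/2}\subseteq\ker\phi$, a subspace of codimension exactly one; this already yields the ``$\geq 1$'' half of~\ref{thm:alspachlockewitte:whenthecodimensionisone}. When $X$ is bipartite $\phi$ is identically $0$, and when $n$ is odd $X$ is automatically non-bipartite (a connected bipartite Cayley graph on an abelian group has equal colour classes, hence even order) and the Hamilton circuits avoid $\ker\phi$, so in these two regimes $\phi$ poses no obstruction to generating everything.

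Second, I would fix a small generating set reflecting the abelian structure: for $s,t\in S$ the \emph{quadrilaterals} $v,\,v+s,\,v+s+t,\,v+t,\,v$ (taken only when non-degenerate) and for each $s\in S$ of order $\geq 3$ the \emph{generator polygons} $v,\,v+s,\,\dotsc,\,v+(\mathrm{ord}(s)-1)s,\,v$, with $v$ ranging over $A$. The claim to be proved here, by a spanning-tree rewriting argument (induct on the distance in $X$ from $0\in A$, and use commutativity to pass between translates), is that these elementary cycles always span $\ker\phi$, and span all of $\upZ_1(X;\Z/2)$ precisely when some $s\in S$ has odd order $\geq 3$; every quadrilateral is even, so the only possible source of an odd generator is a generator polygon of odd length.

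Third --- and this is the heart of the matter --- I would prove a lifting lemma putting these elementary cycles into $\langle\mathcal{H}(X)\rangle_{\Z/2}$. The elementary move uses Theorem~\ref{thm:ChenQuimpo}: if $e=uv\in\upE(X)$ and $P,P'$ are Hamilton $u$--$v$ paths, then $P+e$ and $P'+e$ are Hamilton circuits, so $P+P'=(P+e)+(P'+e)\in\langle\mathcal{H}(X)\rangle_{\Z/2}$, and a single such path closed by $e$ already lies in $\mathcal{H}(X)$. To reach a given elementary cycle $C=u_0u_1\dotsc u_{\ell-1}u_0$ I would induct on $\ell$, taking a Hamilton circuit through $u_0u_1$ and rerouting its complementary Hamilton path (legitimate by Hamilton-connectivity, resp.\ Hamilton-laceability in the bipartite case) so as to trade $C$ for a strictly shorter cycle plus an element already known to lie in the span. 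I expect the bookkeeping of this telescoping --- ensuring the leftover paths recombine into Hamilton circuits rather than escaping the span, and tracking the parity of the number of circuits used --- to be the main obstacle, and it is exactly where the hypotheses must be spent: the move as stated manufactures only \emph{even} cycles when $n$ is even, so an isolated odd cycle can never be lifted in that regime.

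Finally I would assemble the three conclusions. If $X$ is bipartite, then $\ker\phi=\upZ_1(X;\Z/2)$, all elementary cycles are even and all lift, giving~\ref{thm:alspachlockewitte:cayleygraphbipartite}. If $n$ is odd, then (no involutions being present) some generator polygon is an odd cycle, the elementary cycles span all of $\upZ_1(X;\Z/2)$, and the lifting lemma applies to each, giving~\ref{thm:alspachlockewitte:numberofverticesodd}. If $n$ is even and $X$ is non-bipartite, then $\ker\phi$ is spanned by the even elementary cycles together with the pairwise sums of odd elementary cycles (both kinds being even); the even elementary cycles lift directly, and I would lift each pairwise sum of two odd generator polygons --- again an even object --- to conclude $\langle\mathcal{H}(X)\rangle_{\Z/2}=\ker\phi$ and hence codimension exactly one. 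This last lifting is the delicate one, and the hypothesis that $X$ is not a prism over an odd circuit is precisely what I would need to keep a suitable rerouting available: in a prism over an odd circuit the pairwise sum of the two triangles fails to lift, the Hamilton circuits obey a second constraint beyond $\phi$, and the codimension jumps, which is why that family must be excluded from~\ref{thm:alspachlockewitte:whenthecodimensionisone}.
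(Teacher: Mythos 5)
First, a remark on scope: Theorem~\ref{thm:AlspachLockeWitte} is not proved in this paper at all --- it is quoted from Alspach--Locke--Witte \cite{MR1057481} and used as a black box (hence the terminal $\Box$) --- so there is no in-paper argument to measure your proposal against. Judged on its own terms, your outline gets the easy half right: the parity functional $\phi(c)=\lvert\Supp(c)\rvert\bmod 2$ is indeed $\Z/2$-linear on the cycle space, sends every Hamilton circuit to $f_0(X)\bmod 2$, and yields the codimension-$\geq 1$ half of \ref{thm:alspachlockewitte:whenthecodimensionisone}; and the overall architecture (Chen--Quimpo connectivity as the engine, short ``relator'' cycles as intermediaries) is in the right spirit.

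However, the load-bearing lemma of your second step is false. Take $X=K_8=\Cay\bigl((\Z/2)^3;(\Z/2)^3\setminus\{0\}\bigr)$: every generator is an involution, so your elementary cycles are exactly the quadrilaterals $v,\,v+s,\,v+s+t,\,v+t,\,v$. For each nonzero $s$, the functional ``number of edges whose endpoints differ by $s$, reduced mod $2$'' vanishes on every quadrilateral (each quadrilateral uses $0$ or $2$ edges of each difference-type), and these seven functionals span a space of dimension $4$ modulo the cut space; hence the quadrilaterals span a subspace of codimension at least $4$ in the $21$-dimensional space $\upZ_1(K_8;\Z/2)$, whereas $\ker\phi$ has codimension $1$ and the theorem asserts $\langle\mathcal{H}(K_8)\rangle_{\Z/2}=\ker\phi$. (Conceptually: your elementary relators present $(\Z/2)^7$ rather than $(\Z/2)^3$, and the discrepancy is $(\Z/2)^4$, not a single $\Z/2$.) The same failure occurs in bipartite instances, e.g.\ $\Cay\bigl((\Z/2)^4;\{e_1,e_2,e_3,e_4,e_1+e_2+e_3\}\bigr)$, so neither \ref{thm:alspachlockewitte:cayleygraphbipartite} nor \ref{thm:alspachlockewitte:whenthecodimensionisone} can be reached by lifting your elementary cycles alone. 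Independently of this, the two steps you yourself flag as ``the main obstacle'' and ``the delicate one'' are not supplied: the telescoping identity $\upc_C=\sum_i\upc_{P_i+e_i}$ only produces $\upc_C$ if the chosen Hamilton paths sum to zero, which requires an explicit construction rather than an appeal to Hamilton-connectedness, and the no-prism hypothesis is only motivated, never used in an argument. So the proposal is an outline with a false intermediate claim and with precisely the hard content of \cite{MR1057481} left open.
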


To efficiently formulate properties of the auxiliary substructures, we have to 
agree upon some further terminology:

\begin{definition}\label{def:monotonegraphpropertiesGlzetaandBGlzeta}
Let $\mathfrak{L}$ be a map from graphs to subsets of $\Z_{\geq 1}$, let 
$\mathfrak{L}-1 := \{ l-1\colon l\in\mathfrak{L} \}$ and let $\xi\in\Z_{\geq 0}$. 
We define 
\begin{enumerate}[label={\rm(\arabic{*})}]
\item\label{it:def:Lconnected} 
a graph $X$ to be \emph{$\mathfrak{L}$-path-connected} (if 
$\mathfrak{L}=\{ f_0(\cdot)-1 \}$ we speak of being \emph{Hamilton-connected}) if 
and only if for every $\{v,w\}\in\binom{\upV(X)}{2}$ there exists in $X$ at least 
one $v$-$w$-path having its length in the set $\mathfrak{L}(X)$ (we denote the 
collection of all such graphs by $\mathcal{CO}_{\mathfrak{L}}$) \quad ,
\item\label{it:def:Llaceable}
a variant of $\mathcal{CO}_{\mathfrak{L}}$ for bipartite graphs: adopting a by 
now widespread usage dating back at least to work of G.~J.~Simmons \cite{MR527987}, 
a bipartite graph $X$ will be called \emph{$\mathfrak{L}$-laceable} 
(if $\mathfrak{L}=\{f_0(\cdot)-1\}$ also \emph{Hamilton-laceable}) if and only if 
for any two $v,w\in \upV(X)$ not in the same bipartition class there exists at 
least one $v$-$w$-path having its length in the set $\mathfrak{L}(X)$ (we denote 
the collection of all such graphs by $\mathcal{LA}_{\mathfrak{L}}$) \quad ,
\item\label{it:def:Lgenerated} 
for a graph $X$ the set $\mathcal{C}_{\mathfrak{L}}(X)$ as the set of all 
graph-theoretical circuits in $X$ whose length is an element 
of $\mathfrak{L}$. (In particular, $\mathcal{C}_{\{ f_0(X) \}}(X) = \mathcal{H}(X)$.)
\item\label{it:def:CDxi} 
$\mathrm{cd}_{\xi}\mathcal{C}_{\mathfrak{L}}$ as the collection of graphs $X$ with 
$\dim_{\Z/2}\bigl ( \langle \mathcal{C}_{\mathfrak{L}}(X) \rangle_{\Z/2} \bigr ) = 
\beta_1(X) - \xi$ \quad ,
\item\label{it:def:bCDxi} 
$\mathrm{b}\mathrm{cd}_{\xi}\mathcal{C}_{\mathfrak{L}}\subseteq 
\mathrm{cd}_{\xi}\mathcal{C}_{\mathfrak{L}}$ as the collection of all the 
\emph{bipartite} elements of $\mathrm{cd}_{\xi}\mathcal{C}_{\mathfrak{L}}$ \quad ,
\item\label{it:def:monotonegraphpropertiesGlzetaandBGlzeta:Glzeta} 
$\mathcal{M}_{\mathfrak{L},\xi} := \mathrm{cd}_{\xi}\mathcal{C}_{\mathfrak{L}} \cap 
\mathcal{CO}_{\mathfrak{L}-1}$ and $\mathrm{b}\mathcal{M}_{\mathfrak{L},\xi} := 
\mathrm{b}\mathrm{cd}_{\xi}\mathcal{C}_{\mathfrak{L}}\cap\mathcal{LA}_{\mathfrak{L}-1}$\quad .
\end{enumerate}
\end{definition}

The condition in \ref{it:def:CDxi} is equivalent to 
$\dim_{\Z/2}\bigl( \upZ_1(X;\Z/2)/\langle \mathcal{C}_{\mathfrak{L}(X)}(X) \rangle_{\Z/2} \bigr ) = \xi$, in other words, $\mathrm{cd}_{\xi}\mathcal{C}_{\mathfrak{L}}(X)$ is the 
set of all graphs for which $\langle \mathcal{C}_{\mathfrak{L}(X)}(X) \rangle_{\Z/2}$ 
has codimension $\xi$ in $\upZ_1(X;\Z/2)$. In particular 
$\mathrm{cd}_{0}\mathcal{C}_{\{f_0(\cdot)\}}(X)$ is the set of all graphs whose cycle 
space is generated by the set of their Hamilton circuits. We will now formulate all 
the properties of the auxiliary spanning substructures that we use in the proof:

\begin{lemma}[{properties of the auxiliary structures}]
\label{lem:generatingpropertiesofauxiliarestructures}
For every $n\geq 5$ and every $r\in\Z_{\geq 4}$, 

{\small
\begin{minipage}[b]{0.9\linewidth}\begin{enumerate}[label={\rm(a\arabic{*})},start=1]
\item\label{lem:it:squareofhamiltoncircuitascayleygraph} 
$\upC_n^2$ $\cong$ 
$\Cay(\Z/n;\{\overline{1},\overline{2},\overline{n-2},\overline{n-1}\})$\; ,
\end{enumerate}\end{minipage}

\begin{minipage}[b]{0.9\linewidth}\begin{enumerate}[label={\rm(a\arabic{*})},start=2]
\item\label{lem:it:squareofhamiltoncircuit:notaprism} $\upC_n^2$ is not a prism 
over a graph (i.e. there does not exist a graph $Y$ 
with $\upC_n^2 \cong Y\square \upP_1$)\; ,
\end{enumerate}\end{minipage}

\begin{minipage}[b]{0.45\linewidth}\begin{enumerate}[label={\rm(a\arabic{*})},start=3]
\item\label{876676545545655rr565667}
if $n$ is even, then $\upC_n^2\in\mathcal{M}_{\{f_0(\cdot)\},1}$\ ,
\end{enumerate}\end{minipage}
\begin{minipage}[b]{0.5\linewidth}\begin{enumerate}[label={\rm(a\arabic{*})},start=4]
\item\label{6t6t655r5r55r56656565rtrt}
if $n$ is odd, then $\upC_n^2\in\mathcal{M}_{\{f_0(\cdot)\},0}$\ ,
\end{enumerate}\end{minipage}

\begin{minipage}[b]{0.6\linewidth}\begin{enumerate}[label={\rm(a\arabic{*})},start=5]
\item\label{898u889y8778y6t5r5454e446446} 
if $n$ is even, then $\upC_n^2\in\mathcal{M}_{\{f_0(\cdot)-1,f_0(\cdot)\},0}$\quad ,  
\end{enumerate}\end{minipage}

\begin{minipage}[b]{0.53\linewidth}\begin{enumerate}[label={\rm(a\arabic{*})},start=6]
\item\label{lem:it:PrIsoCayC2Cr} 
$\Pr_r \cong \Cay(\Z/2\oplus\Z/r ; \{ (\overline{1},\overline{0}), 
(\overline{0},\overline{1}), (\overline{0},\overline{r-1})\})$ \ ,
\end{enumerate}\end{minipage}
\begin{minipage}[b]{0.42\linewidth}\begin{enumerate}[label={\rm(a\arabic{*})},start=7]
\item\label{lem:it:MrIsoCayC2r} 
$\upM_r \cong \Cay(\Z/(2r); \{\overline{1},\overline{r},\overline{2r-1}\})$ \ ,
\end{enumerate}\end{minipage}

\begin{minipage}[b]{0.45\linewidth}\begin{enumerate}[label={\rm(a\arabic{*})},start=8]
\item\label{lem:it:prism:hamiltonlaceable} 
if $r$ is even, then $\Pr_r\in\mathcal{LA}_{f_0(\cdot)-1}$ \; ,
\end{enumerate}\end{minipage}
\begin{minipage}[b]{0.5\linewidth}\begin{enumerate}[label={\rm(a\arabic{*})},start=9]
\item\label{lem:it:moebiusladder:hamiltonlaceable} 
if $r$ is odd, then $\upM_r\in\mathcal{LA}_{f_0(\cdot)-1}$ \; ,  
\end{enumerate}\end{minipage}

\begin{minipage}[b]{0.45\linewidth}\begin{enumerate}[label={\rm(a\arabic{*})},start=10]
\item\label{lem:it:prism:generator} 
if $r$ is even, then $\Pr_r\in\mathrm{b}\mathcal{M}_{\{f_0(\cdot)\},0}$ \; ,
\end{enumerate}\end{minipage}
\begin{minipage}[b]{0.5\linewidth}\begin{enumerate}[label={\rm(a\arabic{*})},start=11]
\item\label{lem:it:moebiusladder:generator} 
if $r$ is odd, then $\upM_r\in\mathrm{b}\mathcal{M}_{\{f_0(\cdot)\},0}$ \; ,  
\end{enumerate}\end{minipage}

\begin{minipage}[b]{0.45\linewidth}\begin{enumerate}[label={\rm(a\arabic{*})},start=12]
\item\label{lem:it:relationofBCrtoPr} 
if $r$ is even, then $\CL_r\cong \Pr_r$ \; , 
\end{enumerate}\end{minipage}
\begin{minipage}[b]{0.5\linewidth}\begin{enumerate}[label={\rm(a\arabic{*})},start=13]
\item\label{lem:it:relationofBCrtoMr}
if $r$ is odd, then $\CL_r\cong \upM_r$ \; , 
\end{enumerate}\end{minipage}

\begin{minipage}[b]{0.45\linewidth}\begin{enumerate}[label={\rm(a\arabic{*})},start=14]
\item\label{lem:it:bipartitecyclicladder:hamiltongenerated} 
$\CL_r\in\mathcal{LA}_{f_0(\cdot)-1}$ \; ,  
\end{enumerate}\end{minipage}
\begin{minipage}[b]{0.45\linewidth}\begin{enumerate}[label={\rm(a\arabic{*})},start=15]
\item\label{lem:it:bipartitecyclicladder:generator} 
$\CL_r\in\mathrm{b}\mathcal{M}_{\{f_0(\cdot)\},0}$ \; ,  
\end{enumerate}\end{minipage}

\begin{minipage}[b]{0.45\linewidth}\begin{enumerate}[label={\rm(a\arabic{*})},start=16]
\item\label{t6t67y8u899o8987g6g678jik90o}
if $r$ is even, then $\Pr_r^{\boxtimes}\in\mathcal{CO}_{\{f_0(\cdot)-1\}}$\; ,
\end{enumerate}\end{minipage}
\begin{minipage}[b]{0.5\linewidth}\begin{enumerate}[label={\rm(a\arabic{*})},start=17]
\item\label{89878675643545465r5556}
if $r$ is odd, then $\upM_r^{\boxtimes}\in\mathcal{CO}_{\{f_0(\cdot)-1\}}$\; ,
\end{enumerate}\end{minipage}

\begin{minipage}[b]{0.45\linewidth}\begin{enumerate}[label={\rm(a\arabic{*})},start=18]
\item\label{32671163432867327682}
if $r$ is even, then $\Pr_r^{\boxminus}\in\mathcal{CO}_{\{f_0(\cdot)-1\}}$\; ,
\end{enumerate}\end{minipage}
\begin{minipage}[b]{0.5\linewidth}\begin{enumerate}[label={\rm(a\arabic{*})},start=19]
\item\label{0o978986765565454454e}
if $r$ is odd, then $\upM_r^{\boxminus}\in\mathcal{CO}_{\{f_0(\cdot)-1\}}$\; ,
\end{enumerate}\end{minipage}

\begin{minipage}[b]{0.9\linewidth}\begin{enumerate}[label={\rm(a\arabic{*})},start=20]
\item\label{38794320986754435465687}
concerning $\Pr_r^{\boxtimes}$ and $\Pr_r^{\boxminus}$ for even $r$, and 
concerning $\upM_r^{\boxtimes}$ and $\upM_r^{\boxminus}$ for odd $r$, the 
set $\{\upc_{C}\colon C\in \mathcal{CB}_{X}^{(1)}\}$ is a linearly independent subset 
of $\upZ_1(X;\Z/2)$ for all 
$X\in \{ \Pr_r^{\boxtimes}, \Pr_r^{\boxminus}, \upM_r^{\boxtimes}, \upM_r^{\boxminus}\}$\; ,
\end{enumerate}\end{minipage}

\begin{minipage}[b]{0.9\linewidth}\begin{enumerate}[label={\rm(a\arabic{*})},start=21]
\item\label{98786543322154675878987}
Concerning $\Pr_r^{\boxtimes}$ and $\Pr_r^{\boxminus}$ for even $r$, and 
concerning $\upM_r^{\boxtimes}$ and $\upM_r^{\boxminus}$ for odd $r$, the 
set $\{\upc_{C}\colon C\in \mathcal{CB}_{X}^{(2)}\}$ is a linearly independent subset 
of $\upZ_1(X;\Z/2)$ for all 
$X\in \{ \Pr_r^{\boxtimes}, \Pr_r^{\boxminus}, \upM_r^{\boxtimes}, \upM_r^{\boxminus}\}$\ , 
\end{enumerate}\end{minipage}

\begin{minipage}[b]{0.9\linewidth}\begin{enumerate}[label={\rm(a\arabic{*})},start=22]
\item\label{98897665642334334544334879768}
Concerning $\Pr_r^{\boxtimes}$ and $\Pr_r^{\boxminus}$ for even $r\geq 4$, 
and concerning $\upM_r^{\boxtimes}$ and $\upM_r^{\boxminus}$ for odd $r\geq 5$, 
the sum $\left\langle\mathcal{CB}_{X}^{(1)}\right\rangle_{\Z/2}+ 
\left\langle \mathcal{CB}_{X}^{(2)}\right\rangle_{\Z/2} \subseteq \upC_1(X;\Z/2)$ 
is direct for all 
$X\in \{ \Pr_r^{\boxtimes}, \Pr_r^{\boxminus}, \upM_r^{\boxtimes}, \upM_r^{\boxminus}\}$ \ , 
\end{enumerate}\end{minipage}

\begin{minipage}[b]{0.95\linewidth}\begin{enumerate}[label={\rm(a\arabic{*})},start=23]
\item\label{9iu8u7yr5f4d3s44d5789u}
Concerning $\Pr_r^{\boxtimes}$ and $\Pr_r^{\boxminus}$ for even $r$, and 
concerning $\upM_r^{\boxtimes}$ and $\upM_r^{\boxminus}$ for odd $r$\ , 

{\scriptsize
\begin{minipage}[b]{0.5\linewidth}
\begin{enumerate}[label={\rm($\boxtimes$.(\arabic{*}))},labelsep=1.5em,start=0]
\item\label{3287923478248979847238947238} 
$\langle \mathcal{H}(\Pr_r^{\boxtimes}) \rangle_{\Z/2} = \upZ_1(\Pr_r^{\boxtimes};\Z/2)$
\ ,
\end{enumerate}
\end{minipage}
\begin{minipage}[b]{0.48\linewidth}
\begin{enumerate}[label={\rm($\boxtimes$.(\arabic{*}))},labelsep=1.5em,start=1]
\item\label{935872702348723487932893248934} 
$\langle \mathcal{H}(\upM_r^{\boxtimes}) \rangle_{\Z/2} = \upZ_1(\upM_r^{\boxtimes};\Z/2)$ 
\ ,
\end{enumerate}
\end{minipage}

\begin{minipage}[b]{0.5\linewidth}
\begin{enumerate}[label={\rm($\boxminus$.(\arabic{*}))},labelsep=1.5em,start=0]
\item\label{349078364454938747474747329} 
$\dim_{\Z/2}\bigl(\upZ_1(\Pr_r^{\boxminus};\Z/2) / \langle \mathcal{H}(\Pr_r^{\boxminus}) \rangle_{\Z/2}\bigr) = 1$ \ ,
\end{enumerate}
\end{minipage}
\begin{minipage}[b]{0.48\linewidth}
\begin{enumerate}[label={\rm($\boxminus$.(\arabic{*}))},labelsep=1.5em,start=1]
\item\label{1364867134086734108413408746} 
$\dim_{\Z/2}\bigl(\upZ_1(\upM_r^{\boxminus};\Z/2) / \langle \mathcal{H}(\upM_r^{\boxminus}) \rangle_{\Z/2}\bigr) = 1$ \ ,
\end{enumerate}
\end{minipage}

\begin{enumerate}[label={\rm($\boxminus$.$f_0(\cdot)-1$.(\arabic{*}))},
leftmargin=13em,labelsep=1.5em,start=0]
\item\label{83276268932879348793254328793} 
$\langle \mathcal{C}_{\{f_0(\cdot)-1,f_0(\cdot)\}}(\Pr_r^{\boxminus})\rangle_{\Z/2}$ 
$=$ $\upZ_1(\Pr_r^{\boxminus};\Z/2)$ \; ,
\item\label{328238346786839328743267237773} 
$\langle \mathcal{C}_{\{f_0(\cdot)-1,f_0(\cdot)\}}(\upM_r^{\boxminus})\rangle_{\Z/2}$ 
$=$ $\upZ_1(\upM_r^{\boxminus};\Z/2)$ \quad ,
\end{enumerate}
}
\vspace{0.5em}
\end{enumerate}\end{minipage}

\begin{minipage}[b]{0.46\linewidth}\begin{enumerate}[label={\rm(a\arabic{*})},start=24]
\item\label{3w44345465566565565r56} 
if $r$ is even, then {\scriptsize $\Pr_r^{\boxtimes}\in\mathcal{M}_{\{f_0(\cdot)\},0}$}\; ,
\end{enumerate}\end{minipage}
\begin{minipage}[b]{0.5\linewidth}\begin{enumerate}[label={\rm(a\arabic{*})},start=25]
\item\label{8767856454354e54565676767} 
if $r$ is odd, then {\scriptsize $\upM_r^{\boxtimes}\in\mathcal{M}_{\{f_0(\cdot)\},0}$}\; ,  
\end{enumerate}\end{minipage}

\begin{minipage}[b]{0.46\linewidth}\begin{enumerate}[label={\rm(a\arabic{*})},start=26]
\item\label{7667545344edr65656554r54d} 
if $r$ is even, then {\scriptsize $\Pr_r^{\boxminus}\in\mathcal{M}_{\{f_0(\cdot)\},1}$}\; ,
\end{enumerate}\end{minipage}
\begin{minipage}[b]{0.5\linewidth}\begin{enumerate}[label={\rm(a\arabic{*})},start=27]
\item\label{6756667898887y7yy7765544544ew} 
if $r$ is odd, then {\scriptsize $\upM_r^{\boxminus}\in\mathcal{M}_{\{f_0(\cdot)\},1}$}\; ,  
\end{enumerate}\end{minipage}

\begin{minipage}[b]{0.46\linewidth}\begin{enumerate}[label={\rm(a\arabic{*})},start=28]
\item\label{87876665r5r5r5665656565} if $r$ is even, then 
{\scriptsize $\Pr_r^{\boxminus}\in\mathcal{M}_{\{f_0(\cdot)-1,f_0(\cdot)\},0}$}\; ,
\end{enumerate}\end{minipage}
\begin{minipage}[b]{0.5\linewidth}\begin{enumerate}[label={\rm(a\arabic{*})},start=29]
\item\label{676776hy7h77h7767765656657} if $r$ is odd, then 
{\scriptsize $\upM_r^{\boxminus}\in\mathcal{M}_{\{f_0(\cdot)-1,f_0(\cdot)\},0}$}\; ,  
\end{enumerate}\end{minipage}

\begin{minipage}[b]{0.95\linewidth}\begin{enumerate}[label={\rm(a\arabic{*})},start=30]\item\label{it:roughbandwidthstatements}
for every $\beta>0$ there exists $n_0 = n_0(\beta)\in \Z$ such that---in case of 
$\Pr_r^{\boxtimes}$ and $\Pr_r^{\boxminus}$ for even $r$ while in case 
of $\upM_r^{\boxtimes}$ and $\upM_r^{\boxminus}$ for odd $r$---if 
$Y \in\{C_n^2,\CL_r,\Pr_r^{\boxtimes},\Pr_r^{\boxminus},\upM_r^{\boxtimes},\upM_r^{\boxminus}\}$ 
and $f_0(Y)\geq n_0$, the following is true: the bandwidth satisfies 
$\bw(Y) \leq \beta \cdot f_0(Y)$, and moreover for each 
$Y \in\{\Pr_r^{\boxtimes},\Pr_r^{\boxminus},\upM_r^{\boxtimes},\upM_r^{\boxminus}\}$ there 
exists a bijection $\upb_Y\colon\upV(Y)\to\{1,\dotsc,f_0(Y)\}$ and a map 
$\uph_Y\colon \upV(Y) \to \{0,1,2\}$ such that $\upb_Y$ is a 
bandwidth-$\beta f_0(Y)$-labelling and $\uph_Y$ a $3$-colouring of $Y$, and $\uph_Y$ 
has $\lvert \uph_Y^{-1}(0)\rvert \leq \beta f_0(Y)$ and is 
$\bigl(8 \cdot 2\cdot \beta\cdot f_0(Y), 4 \cdot 2\cdot \beta\cdot f_0(Y) \bigr)$-zero-free w.r.t. $\upb_Y$ \; .
\end{enumerate}\end{minipage}
}
\end{lemma}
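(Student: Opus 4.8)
The plan is to sort the twenty-nine assertions by the machinery each needs and to isolate the two genuinely new devices on which everything turns: a length-parity functional, and the explicit Hamilton-circuit families of Definitions~\ref{90987564224567898787878777899831323}--\ref{987876676678787877676509876543}. First I would dispatch the parts that reduce to known theorems about Cayley graphs. The identifications \ref{lem:it:squareofhamiltoncircuitascayleygraph}, \ref{lem:it:PrIsoCayC2Cr}, \ref{lem:it:MrIsoCayC2r} are settled by writing down the obvious bijections ($\overline{i}\mapsto i$ for $\upC_n^2$; $(\epsilon,i)\mapsto x_i,y_i$ for $\Pr_r$; a single residue-to-vertex map for $\upM_r$) and checking that the prescribed connection set reproduces exactly the stated edges. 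Assertion~\ref{lem:it:squareofhamiltoncircuit:notaprism} is then cleanest via triangles: in any prism $Y\square\upP_1$ the matching edges lie in \emph{no} triangle, whereas \emph{every} edge of $\upC_n^2$ lies in a triangle (each $\{i,i{+}1\}$ and $\{i,i{+}2\}$ sits in $\{i,i{+}1,i{+}2\}$), so $\upC_n^2$ is no prism. With the bipartite/non-bipartite dichotomy and the parity of the order read off, Chen--Quimpo (Theorem~\ref{thm:ChenQuimpo}) yields Hamilton-connectedness ($\upC_n^2$) and Hamilton-laceability ($\Pr_r$, even $r$; $\upM_r$, odd $r$), while Alspach--Locke--Witte (Theorem~\ref{thm:AlspachLockeWitte}) yields the cycle-space statements; together these give \ref{876676545545655rr565667}, \ref{6t6t655r5r55r56656565rtrt} and \ref{lem:it:prism:hamiltonlaceable}--\ref{lem:it:moebiusladder:generator}. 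Finally $\CL_r\cong\Pr_r$ for even $r$ and $\CL_r\cong\upM_r$ for odd $r$ (assertions \ref{lem:it:relationofBCrtoPr}, \ref{lem:it:relationofBCrtoMr}) by an explicit relabelling in which the two rails close into two cycles or into one according to the parity of $r$; transporting along these isomorphisms gives \ref{lem:it:bipartitecyclicladder:hamiltongenerated}, \ref{lem:it:bipartitecyclicladder:generator}.

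The conceptual key to the codimension-one phenomena is that the map $\ell\colon\upZ_1(X;\Z/2)\to\Z/2$ sending a cycle to the parity of $\lvert\Supp(\cdot)\rvert$ is $\Z/2$-linear (since $\lvert\Supp(c+c')\rvert\equiv\lvert\Supp(c)\rvert+\lvert\Supp(c')\rvert\pmod 2$), vanishes identically exactly when $X$ is bipartite, and vanishes on every even circuit. As $\upC_n^2$ for even $n$, together with $\Pr_r^{\boxminus}$ and $\upM_r^{\boxminus}$, all have even order and are non-bipartite, every Hamilton circuit lies in the proper hyperplane $\ker\ell$, furnishing the bound $\mathrm{codim}\geq 1$ uniformly (the matching upper bound coming from Alspach--Locke--Witte for $\upC_n^2$ and from the explicit families below for the $\boxminus$-graphs). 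Conversely any single circuit of \emph{odd} length $f_0(\cdot)-1$ lies outside $\ker\ell$, so adjoining it to a Hamilton-generating system of $\ker\ell$ restores all of $\upZ_1$; this is what turns the codimension-one results \ref{876676545545655rr565667}, \ref{7667545344edr65656554r54d}, \ref{6756667898887y7yy7765544544ew} into the codimension-zero results \ref{898u889y8778y6t5r5454e446446}, \ref{87876665r5r5r5665656565}, \ref{676776hy7h77h7767765656657} once $\{f_0(\cdot)-1,f_0(\cdot)\}$-circuits are allowed.

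For the enhanced graphs I would first record $\beta_1(\Pr_r^{\boxtimes})=\beta_1(\upM_r^{\boxtimes})=r+4$ and $\beta_1(\Pr_r^{\boxminus})=\beta_1(\upM_r^{\boxminus})=r+5$, and note that each $\mathcal{CB}_X^{(1)}$ has $5$ members and each $\mathcal{CB}_X^{(2)}$ has $r-1$ members, all Hamilton circuits. The independence assertions \ref{38794320986754435465687}, \ref{98786543322154675878987} and the directness \ref{98897665642334334544334879768} I would prove by projecting onto chosen coordinate edges: projecting $\mathcal{CB}_X^{(2)}$ onto the rung edges $x_ky_k$ exhibits a unitriangular incidence pattern (each $C^{x_jy_j}$ meets essentially only the rungs near index $j$), forcing independence, while a complementary projection (onto edges at $z$, resp.\ $z',z''$, and the repeated edges in $\mathcal{CB}_X^{(1)}$) separates the two spans and settles the $5$-element family by a finite check uniform in $r$. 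The dimension count then closes everything at once: for $\Pr_r^{\boxtimes},\upM_r^{\boxtimes}$ the $r+4$ Hamilton circuits are a basis, proving \ref{3287923478248979847238947238}, \ref{935872702348723487932893248934} and hence \ref{3w44345465566565565r56}, \ref{8767856454354e54565676767}; for $\Pr_r^{\boxminus},\upM_r^{\boxminus}$ the same $r+4$ Hamilton circuits span a subspace of dimension $\beta_1-1$, which combined with the parity bound above pins the codimension at exactly one, proving \ref{349078364454938747474747329}, \ref{1364867134086734108413408746}, \ref{83276268932879348793254328793}, \ref{328238346786839328743267237773} and hence \ref{7667545344edr65656554r54d}--\ref{676776hy7h77h7767765656657}.

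What remains, and what I expect to be the main obstacle, is the Hamilton-connectedness \ref{t6t67y8u899o8987g6g678jik90o}--\ref{0o978986765565454454e}: these graphs are \emph{not} Cayley graphs, so Chen--Quimpo is unavailable, and being non-bipartite they require a Hamilton path between \emph{every} pair of vertices, including two lying in the same bipartition class of the underlying $\Pr_r$ or $\upM_r$ --- precisely the case bipartite laceability cannot reach. The strategy is to reduce, for each prescribed endpoint pair, the search for a Hamilton path of the enhanced graph to the existence in the base of two vertex-disjoint paths covering all base vertices with endpoints $u,p$ and $q,v$, and then to absorb the parity defect by choosing \emph{which} neighbours $p,q$ of $z$ (resp.\ of $z',z''$) the path enters and leaves through --- the four neighbours of $z$ splitting two-and-two between the bipartition classes exactly so that the needed endpoint parities can always be met. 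These base path-systems come from the laceability established in \ref{lem:it:prism:hamiltonlaceable}, \ref{lem:it:moebiusladder:hamiltonlaceable} together with a bounded number of explicit local reroutings, but the bookkeeping over all endpoint positions is the substantial part. The bandwidth claims \ref{it:roughbandwidthstatements} are by comparison routine: each graph admits a bounded-bandwidth layout obtained from the zig-zag labelling realising $\bw(\upC_n)=2$, with the $O(1)$ extra vertices inserted harmlessly, so $\bw(Y)=O(1)=o(f_0(Y))$, and the accompanying $3$-colouring places colour $0$ only on $z$ (resp.\ $z',z''$), whence the zero-class is bounded and, once $f_0(Y)$ is large, every window of length $\Theta(\beta f_0(Y))$ contains a colour-$0$-free sub-window, giving the $(c_1,c_2)$-zero-free property.
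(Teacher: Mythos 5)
Much of your plan coincides with the paper's: the Cayley identifications \ref{lem:it:squareofhamiltoncircuitascayleygraph}, \ref{lem:it:PrIsoCayC2Cr}, \ref{lem:it:MrIsoCayC2r}, \ref{lem:it:relationofBCrtoPr}, \ref{lem:it:relationofBCrtoMr} followed by Chen--Quimpo and Alspach--Locke--Witte, the nonsingular-minor arguments for \ref{38794320986754435465687} and \ref{98786543322154675878987}, and the dimension count closing \ref{9iu8u7yr5f4d3s44d5789u} are exactly the paper's route; your triangle argument for \ref{lem:it:squareofhamiltoncircuit:notaprism} and your explicit length-parity functional for the codimension-one statements are local improvements. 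But two steps are genuinely incomplete. The Hamilton-connectedness claims \ref{t6t67y8u899o8987g6g678jik90o}--\ref{0o978986765565454454e} are the bulk of the lemma, and you do not prove them: you propose reducing a Hamilton path of $\Pr_r^{\boxtimes}$ between prescribed $v,w$ to a spanning system of two vertex-disjoint paths in $\Pr_r$ with ends $v,p$ and $q,w$ for suitable neighbours $p,q$ of $z$, but the existence of such spanning $2$-path-systems with prescribed endpoints is strictly stronger than the Hamilton-laceability recorded in \ref{lem:it:prism:hamiltonlaceable} and \ref{lem:it:moebiusladder:hamiltonlaceable} and does not follow from it by ``a bounded number of local reroutings''; making it precise would require a case analysis of the same order as the direct one (the paper exhibits explicit Hamilton paths in about a dozen endpoint configurations, reduced by the Klein-four symmetry, invoking the corner-vertex lemma for $\upP_2\square\upP_\ell$ in one subcase). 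As written, the hardest quarter of the lemma is a plan, not a proof.

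The second gap is the directness \ref{98897665642334334544334879768}, which cannot be obtained by the projection mechanism you describe. To separate the two spans by restricting to coordinates you would need a set $S$ of edges avoided by every circuit of $\mathcal{CB}_X^{(2)}$ on which $\langle\mathcal{CB}_X^{(1)}\rangle_{\Z/2}$ projects injectively; but for $X=\Pr_r^{\boxtimes}$ the circuits $C_{\ev,r}^{x_1y_1},\dotsc,C_{\ev,r}^{x_{r-1}y_{r-1}}$ jointly cover every edge of $X$ except $zy_1$ and $x_0y_0$, and a $5$-dimensional space cannot inject into $(\Z/2)^2$. The symmetric attempt fails as well: every rung $x_iy_i$ with $i\geq 2$ lies in all five circuits of $\mathcal{CB}_{\Pr_r^{\boxtimes}}^{(1)}$, so $\langle\mathcal{CB}^{(1)}\rangle_{\Z/2}$ does not vanish on any coordinate set large enough to detect $\langle\mathcal{CB}^{(2)}\rangle_{\Z/2}$. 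The paper's proof of this point is an intrinsically $r$-dependent parity analysis: it tracks whether $\lvert\{i\colon\lambda_i^{(2)}=1\}\rvert$ is odd or even and plays the resulting constraints on the edges $x_0x_{r-1}$, $y_0y_{r-1}$, $zx_1$, $zy_1$, $x_0y_0$, $x_1y_1$ against one another until a contradiction appears in every case. No ``finite check uniform in $r$'' on a fixed projection replaces this, and without it the dimension counts that yield \ref{3287923478248979847238947238}--\ref{328238346786839328743267237773}, and hence \ref{3w44345465566565565r56}--\ref{676776hy7h77h7767765656657}, do not close.
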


There are arbitrary choices to be made when proving 
Lemma~\ref{lem:generatingpropertiesofauxiliarestructures}. Let us especially 
mention that there are three different feasible strategies for proving 
\ref{lem:it:bipartitecyclicladder:generator}: 

\begin{enumerate}[label={\rm(A\arabic{*})}]
\item\label{it:way1} Realize $\CL_r$ as a Cayley graph on a finite abelian group. 
Then cite a theorem of B.~Alspach, S.~C.~Locke and D.~Witte which implies 
that $\upZ_1(\CL_r;\Z/2)$ is generated by Hamilton circuits. 
\item\label{it:way2} Determine the full set of non-separating induced 
circuits of $\CL_r$, then realize every single such circuit as a $\Z/2$-sum 
of Hamilton circuits of $\CL_r$ and then appeal to a theorem 
of W.~T.~Tutte (\cite[Statement (2.5)]{MR0158387} \cite[Theorem~3.2.3]{MR2159259}) 
which states that in a $3$-connected graph $X$ the cycle space $\upZ_1(X;\Z/2)$ is 
generated by the set of all non-separating induced circuits.  
\item\label{it:way3} Exhibit sufficiently many explicit Hamilton circuits 
of $\CL_r$ so that after choosing some basis the matrix of these circuits 
has $\Z/2$-rank equal to $\dim_{\Z/2} \upZ_1(\CL_r;\Z/2)$. It then follows 
that $\upZ_1(\CL_r;\Z/2) = \langle \mathcal{H}(\CL_r)\rangle_{\Z/2}$, since 
in a vector space, a maximal linearly independent subset is a generating system. 
\end{enumerate}

Each of \ref{it:way1}--\ref{it:way3} demands attention to the parity of $r$, for 
despite a superficial similarity, the sets of circuits in $\CL_r$ for odd and 
even $r$ turn out to be quite different. A positive way to look at this is as 
helping to decide which of \ref{it:way1}--\ref{it:way3} to choose. While 
each argument can be used for each parity of $r$, there are some reasons to choose 
\ref{it:way1} for odd $r$ and \ref{it:way2} for even $r$. The reason is a 
\emph{trade-off between being a circulant graph (i.e. a Cayley graph on a finite 
cyclic group) and being a planar graph:} 
if $r$ is even, then it can be shown that $\CL_r$ is not isomorphic to any 
Cayley graph on a cyclic group, whereas when $r$ is odd, $\CL_r$ \emph{is} a 
circulant graph. In return, $\CL_r$ is \emph{planar} if and only if $r$ is even, 
and this facilitates \ref{it:way2}: when it comes to proving that no 
non-separating induced circuits of $\CL_r$ have been overlooked, the planarity 
of $\CL_r$ for even $r$ opens up a shortcut via a theorem of A.~Kelmans. 
For odd $r$, however, the non-planarity of $\CL_r$ (easy to prove 
via Kuratowski's theorem \cite[p.~494]{MR0224499}), makes this shortcut 
disappear.\footnote{And then the non-separating induced circuits 
of $\CL_r \cong \upM_r$ are more numerous to boot. While an argument by 
realizing each non-separating induced circuit as a $\Z/2$-sum of 
Hamilton circuits is of course still possible due to $3$-connectedness 
of $\upM_r$, the point is that 
carrying out this argument suddenly becomes more laborious for the double 
reason that the convenient criterion for completeness of the list of all such 
circuits loses its validity and at the same time the number of such circuits 
is even larger.} For these reasons, \ref{it:way2} 
takes considerably more work when $r$ is odd than when $r$ is even. 

In the proofs in Section~\ref{t6t66t7y8778655454445e45} we will opt for 
the shortest route \ref{it:way1}. However, since an argument via non-separating 
induced circuits appears to have some value for auxiliary structures not realizable 
as Cayley-graphs, we will give an example for the constructive 
argumentation \ref{it:way2} in the special 
Section~\ref{sec:alternativeargumentation}---but only for even $r$. 
Strategy~\ref{it:way3}, the most arbitrary of all three (usually there is no 
overriding justification for choosing a particular set of linearly-independent 
Hamilton circuits except that it works) will be used for 
proving \ref{9iu8u7yr5f4d3s44d5789u}, i.e. for 
dealing with the rather ad-hoc auxiliary structures 
$\Pr_r^{\boxtimes}$, $\Pr_r^{\boxminus}$, $\upM_r^{\boxtimes}$ and $\upM_r^{\boxminus}$. 

\subsubsection{Explanation of \ref{it:proofstrategy:liftingstep}}  

A set of graphs is called a \emph{graph property} if and only if it is fixed 
(as a set) by graph isomorphisms. A graph property 
$\mathfrak{X}$ is called \emph{monotone increasing} if and only if for every 
$X\in\mathfrak{X}$, adding to $X$ an arbitrary edge again results in an 
element of $\mathfrak{X}$. A graph property $\mathfrak{X}$ consisting of 
bipartite graphs only is called a 
\emph{monotone increasing property of bipartite graphs} if and only if for every 
$X\in\mathfrak{X}$, adding to $X$ an arbitrary edge which does not create 
an odd circuit again results in an element of $\mathfrak{X}$. 

\begin{lemma}\label{monotonicityofsetoflminus1pathconnectedlgeneratedfinitegraphs}
For any function $\mathfrak{L}$ mapping graphs to subsets of $\Z_{\geq 1}$ and 
any $\xi\in\Z_{\geq 0}$, 
\begin{enumerate}[label={\rm(\arabic{*})}]
\item\label{lockelemmaforgeneralgraphs} the set $\mathcal{M}_{\mathfrak{L},\xi}$ is a 
monotone increasing graph property \quad , 
\item\label{lockelemmaforbipartitegraphs} the set 
$\mathrm{b}\mathcal{M}_{\mathfrak{L},\xi}$ is a monotone increasing property of 
bipartite graphs  \quad . 
\end{enumerate}
\end{lemma}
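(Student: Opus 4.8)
The plan is to verify the defining closure property directly: fix $X\in\mathcal{M}_{\mathfrak{L},\xi}$ and an edge $e=vw\notin\upE(X)$, and show that $X':=(\upV(X),\upE(X)\cup\{e\})$ again lies in $\mathcal{M}_{\mathfrak{L},\xi}$; statement \ref{lockelemmaforbipartitegraphs} will be the same argument with $e$ restricted to run between the two colour classes. Two preliminary observations make the bookkeeping clean. First, adding $e$ leaves $\upV(X)$ and hence $f_0$ unchanged, so $\mathfrak{L}(X)=\mathfrak{L}(X')$ throughout; consequently every path of $X$ witnessing membership in $\mathcal{CO}_{\mathfrak{L}-1}$ survives verbatim in $X'$, and the path-connectivity factor transfers for free. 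Second, since $X$ is connected (being $(\mathfrak{L}-1)$-path-connected) we have the standard edge-addition splitting $\upZ_1(X';\Z/2)=\upZ_1(X;\Z/2)\oplus\langle\upc_C\rangle_{\Z/2}$ for any circuit $C$ of $X'$ through $e$: the ``coefficient of $e$'' map $\upZ_1(X';\Z/2)\to\Z/2$ has kernel $\upZ_1(X;\Z/2)$ and is onto, so in particular $\beta_1(X')=\beta_1(X)+1$.

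All the content therefore sits in the $\mathrm{cd}_\xi$-factor, i.e.\ in showing $\dim_{\Z/2}\langle\mathcal{C}_{\mathfrak{L}}(X')\rangle_{\Z/2}=\beta_1(X')-\xi$. Using path-connectivity I would pick a $v$-$w$-path $P$ in $X$ of length in $\mathfrak{L}(X)-1$ and set $C:=P+e$, a circuit of length in $\mathfrak{L}(X')$ through $e$; then $\mathcal{C}_{\mathfrak{L}}(X)\subseteq\mathcal{C}_{\mathfrak{L}}(X')$ and $\upc_C\notin\upZ_1(X;\Z/2)$, so $\langle\mathcal{C}_{\mathfrak{L}}(X)\rangle_{\Z/2}\oplus\langle\upc_C\rangle_{\Z/2}\subseteq\langle\mathcal{C}_{\mathfrak{L}}(X')\rangle_{\Z/2}$ has dimension $(\beta_1(X)-\xi)+1$. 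This already gives codimension \emph{at most} $\xi$, the easy half. For the reverse inequality I would sort the circuits of $\mathcal{C}_{\mathfrak{L}}(X')$ into those avoiding $e$ (these are exactly $\mathcal{C}_{\mathfrak{L}}(X)$) and those using $e$ (these are $P'+e$ for $v$-$w$-paths $P'$ of length in $\mathfrak{L}(X)-1$); writing $\upc_{P'+e}=\upc_C+(\upc_{P'}+\upc_P)$ shows
\[
\langle\mathcal{C}_{\mathfrak{L}}(X')\rangle_{\Z/2}=\langle\mathcal{C}_{\mathfrak{L}}(X)\rangle_{\Z/2}+\langle\upc_C\rangle_{\Z/2}+\bigl\langle\,\upc_P+\upc_{P'}\ :\ P'\,\bigr\rangle_{\Z/2},
\]
with every $\upc_P+\upc_{P'}$ lying in $\upZ_1(X;\Z/2)$. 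Hence codimension \emph{exactly} $\xi$ is equivalent to the following \emph{Key Lemma}: for all $v$-$w$-paths $P,P'$ of length in $\mathfrak{L}(X)-1$ one has $\upc_P+\upc_{P'}\in\langle\mathcal{C}_{\mathfrak{L}}(X)\rangle_{\Z/2}$.

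This Key Lemma is where I expect all the difficulty to concentrate, and it is precisely the point at which one adapts Locke's Lemma~1. When $\xi=0$ there is nothing to prove, since then $\langle\mathcal{C}_{\mathfrak{L}}(X)\rangle_{\Z/2}=\upZ_1(X;\Z/2)$ already contains the cycle $\upc_P+\upc_{P'}$, so the single-edge step is immediate. For $\xi\geq 1$ the sum $\upc_P+\upc_{P'}$ is a genuine cycle that need not be short, so I would control it through the one invariant visibly constant on $\mathcal{C}_{\mathfrak{L}}(X)$ when $\mathfrak{L}(X)$ has constant parity: the support-size-parity functional $z\mapsto|\supp(z)|\bmod 2$, which is $\Z/2$-linear and vanishes on $\upc_P+\upc_{P'}$ because $P,P'$ have equal-parity lengths. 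In the cases actually occurring here, $\xi=1$ always comes paired with even order and $\mathfrak{L}=\{f_0(\cdot)\}$, so Hamilton circuits have even support and $\langle\mathcal{C}_{\mathfrak{L}}(X)\rangle_{\Z/2}$, having codimension one and being contained in the codimension-one kernel of this functional, \emph{equals} that kernel; the Key Lemma then follows from the parity computation. Equivalently, and this is the general form of the argument, every linear functional vanishing on $\mathcal{C}_{\mathfrak{L}}(X)$ extends to $\upZ_1(X';\Z/2)$ by prescribing its value on $\upc_C$, and the extension annihilates all of $\mathcal{C}_{\mathfrak{L}}(X')$ exactly when each $\upc_P+\upc_{P'}$ lies in the common kernel.

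For the bipartite part \ref{lockelemmaforbipartitegraphs} the only change is that the admissible edge $e=vw$ joins the two bipartition classes (otherwise it would create an odd circuit), so $v$ and $w$ lie in different classes and $\mathcal{LA}_{\mathfrak{L}-1}$ supplies the path $P$; bipartiteness and laceability of $X'$ are immediate, and the codimension bookkeeping together with the parity form of the Key Lemma goes through verbatim, now using that bipartite circuits have even length. The single genuinely load-bearing step remains the Key Lemma, and I would present it as the careful adaptation of Locke's argument, isolating the parity functional as the device that pins the codimension at exactly $\xi$ rather than merely bounding it from above.
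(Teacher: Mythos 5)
Your skeleton coincides with the paper's: the path-connectivity factor transfers for free, and your splitting $\upZ_1(X+e;\Z/2)=\upZ_1(X;\Z/2)\oplus\langle\upc_C\rangle_{\Z/2}$ is exactly \ref{directsum:Z1}, which the paper gets from Lemma~\ref{lem:decomposition} instead of from the coefficient-of-$e$ functional. The real divergence is that you make explicit what the paper's proof leaves unsaid. Applying Lemma~\ref{lem:decomposition} to $U=\langle\mathcal{C}_{\mathfrak{L}}(X+e)\rangle_{\Z/2}$ yields $U=\langle\{u\in U\colon e\notin\Supp_{\mathcal{B}}(u)\}\rangle_{\Z/2}\oplus\langle\upc_C\rangle_{\Z/2}$; to arrive at \ref{directsum:Circl} one must identify the first summand with $\langle\mathcal{C}_{\mathfrak{L}}(X)\rangle_{\Z/2}$, and that identification is precisely your Key Lemma (every $\upc_P+\upc_{P'}$ lies in $\langle\mathcal{C}_{\mathfrak{L}}(X)\rangle_{\Z/2}$), for which the paper supplies no argument. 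So your decomposition is not an alternative route but a more honest account of the same route.

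Your suspicion that all the difficulty concentrates in the Key Lemma is correct, but the situation is sharper than a hard step: for $\xi\geq 1$ the Key Lemma, and with it the lemma as stated, is \emph{false} in general, so no completion of your argument (or of the paper's) in the stated generality exists. Take $\mathfrak{L}=\{f_0(\cdot)\}$, $\xi=1$, and the seven-vertex graph of Proposition~\ref{3541567456216753254}: by the paper's own claims it is Hamilton-connected with $\dim_{\Z/2}\bigl(\upZ_1/\langle\mathcal{H}\rangle_{\Z/2}\bigr)=1$, hence lies in $\mathcal{M}_{\{f_0(\cdot)\},1}$; adding its nine missing edges one at a time produces $\upK^{7}$, which by Theorem~\ref{thm:AlspachLockeWitte}.\ref{thm:alspachlockewitte:numberofverticesodd} (or Hartman's theorem) is Hamilton-generated and therefore lies in $\mathcal{M}_{\{f_0(\cdot)\},0}$, not in $\mathcal{M}_{\{f_0(\cdot)\},1}$. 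The codimension can drop when an edge is added -- your easy half only bounds it from above -- so your restriction of the parity argument to ``the cases actually occurring here'' is a logical necessity, not a presentational choice. What does survive, and what your proposal essentially proves, is: (i) the case $\xi=0$ of both parts, where your argument is complete; and (ii) the case $\xi=1$, $\mathfrak{L}=\{f_0(\cdot)\}$, $f_0$ even, \emph{provided} $X$ is non-bipartite -- a hypothesis you must add, since otherwise the support-parity functional vanishes identically on $\upZ_1(X;\Z/2)$ and its kernel is not of codimension one; non-bipartiteness holds for $\Pr_r^{\boxminus}$, $\upM_r^{\boxminus}$ and $\upC_n^2$ and is preserved under edge addition, so every invocation of the lemma in the proof of Theorem~\ref{thm:mainresults} is covered. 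You should therefore present your argument as a proof of a corrected statement (restricted to $\xi=0$, or to ``codimension at most $\xi$'', or with the parity hypothesis for $\xi=1$) rather than of the lemma as it stands.
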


Lemma~\ref{monotonicityofsetoflminus1pathconnectedlgeneratedfinitegraphs} can serve 
to elevate theorems guaranteeing the existence of spanning subgraphs with a certain 
property to theorems guaranteeing this property for the entire ambient graph: 

\begin{corollary}[lifting properties from spanning subgraphs to host graphs]
\label{cor:conversion}
Let $\mathfrak{L}$ be a function mapping graphs to subsets of $\Z_{\geq 1}$, 
let $\xi\in\Z_{\geq 0}$, let $\mathfrak{X}$ be a set of graphs and 
let $\mathrm{b}\mathfrak{X}$ be a set of bipartite graphs. Then: 
{\small
\begin{enumerate}[label={\rm(\arabic{*})}]
\item\label{cor:lifting:generalgraphs} $\Biggl ($ 
\begin{minipage}[b]{0.37\linewidth} 
if $X\in\mathfrak{X}$, then $\exists Y\in \mathcal{M}_{\mathfrak{L},\xi}$ 
with $f_0(Y)=f_0(X)$ and $Y\hookrightarrow X$ 
\end{minipage} 
$\Biggr)$ $\Longrightarrow$ $\bigl ($ 
if $X\in\mathfrak{X}$, then $X\in\mathcal{M}_{\mathfrak{L},\xi}$ $\bigr)$, 
\item\label{cor:lifting:bipartitegraphs}
$\Biggl ($ 
\begin{minipage}[b]{0.37\linewidth} 
if $X\in\mathrm{b}\mathfrak{X}$, then 
$\exists Y\in\mathrm{b}\mathcal{M}_{\mathfrak{L},\xi}$ 
with $f_0(Y)=f_0(X)$ and $Y\hookrightarrow X$ 
\end{minipage} 
$\Biggr)$ $\Longrightarrow$ $\bigl ($ if $X\in\mathrm{b}\mathfrak{X}$, then 
$X\in\mathrm{b}\mathcal{M}_{\mathfrak{L},\xi}$ $\bigr)$. 
\hfill $\Box$
\end{enumerate}
}
\end{corollary}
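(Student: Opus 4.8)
The plan is to derive both implications directly from Lemma~\ref{monotonicityofsetoflminus1pathconnectedlgeneratedfinitegraphs} by the standard \emph{add-the-edges-one-at-a-time} argument, the only point needing care being the passage from an abstract embedding $Y\hookrightarrow X$ to a concrete spanning-subgraph relation. All the mathematical substance already sits in the Lemma, so this corollary is essentially its spanning-subgraph reformulation.

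First I would treat \ref{cor:lifting:generalgraphs}. Assume the left-hand side holds and fix any $X\in\mathfrak{X}$. By hypothesis there is a graph $Y\in\mathcal{M}_{\mathfrak{L},\xi}$ with $f_0(Y)=f_0(X)$ admitting an injective homomorphism $\iota\colon Y\to X$. Since $\iota$ is injective on vertices and $f_0(Y)=f_0(X)$, the map $\iota$ is a \emph{bijection} on vertex sets; hence its image is a subgraph $Y'\subseteq X$ with $\upV(Y')=\upV(X)$ and $\upE(Y')\subseteq\upE(X)$, and $Y'\cong Y$. Because $\mathcal{M}_{\mathfrak{L},\xi}$ is a graph property it is closed under isomorphism, so $Y'\in\mathcal{M}_{\mathfrak{L},\xi}$. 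Next I would enumerate $\upE(X)\setminus\upE(Y')=\{e_1,\dotsc,e_m\}$ and build the chain of spanning subgraphs $Y'=X_0\subseteq X_1\subseteq\dotsb\subseteq X_m=X$, where $X_i$ arises from $X_{i-1}$ by adding the single edge $e_i$. By Lemma~\ref{monotonicityofsetoflminus1pathconnectedlgeneratedfinitegraphs}.\ref{lockelemmaforgeneralgraphs} the property $\mathcal{M}_{\mathfrak{L},\xi}$ is monotone increasing, so an easy induction on $i$ gives $X_i\in\mathcal{M}_{\mathfrak{L},\xi}$ for all $i$; in particular $X=X_m\in\mathcal{M}_{\mathfrak{L},\xi}$, which is the right-hand side of \ref{cor:lifting:generalgraphs}.

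For \ref{cor:lifting:bipartitegraphs} I would run the identical scheme with $\mathrm{b}\mathfrak{X}$, $\mathrm{b}\mathcal{M}_{\mathfrak{L},\xi}$ and Lemma~\ref{monotonicityofsetoflminus1pathconnectedlgeneratedfinitegraphs}.\ref{lockelemmaforbipartitegraphs} in place of their unadorned counterparts. The one additional observation---and the single place where the argument genuinely uses bipartiteness---is that each added edge $e_i\in\upE(X)$ lies inside the bipartite graph $X$, so adjoining $e_i$ to the intermediate spanning subgraph $X_{i-1}\subseteq X$ cannot create an odd circuit. This is exactly the hypothesis under which a \emph{monotone increasing property of bipartite graphs} is preserved, so the inductive step applies at each stage and yields $X\in\mathrm{b}\mathcal{M}_{\mathfrak{L},\xi}$.

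I do not expect any real obstacle here. The two mildly delicate points are (i) the identification of the abstract embedding with an honest spanning subgraph, which rests only on the fact that an injection between finite vertex sets of equal size is a bijection, and (ii) the bipartite bookkeeping, where one must note that the edges being added already belong to a bipartite host and hence keep the graph odd-circuit-free throughout the induction.
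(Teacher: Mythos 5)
Your argument is correct and is precisely the (omitted) proof the paper intends: the corollary is stated with a terminal $\Box$ as an immediate consequence of Lemma~\ref{monotonicityofsetoflminus1pathconnectedlgeneratedfinitegraphs}, and your edge-by-edge induction, together with the observation that an injective homomorphism between graphs of equal order yields a spanning isomorphic copy, supplies exactly the missing details. The bipartite bookkeeping --- that every added edge already lies in the bipartite host $X$ and hence creates no odd circuit --- is also the right and only extra point needed for part \ref{cor:lifting:bipartitegraphs}.
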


Lemma~\ref{monotonicityofsetoflminus1pathconnectedlgeneratedfinitegraphs} is what 
makes \ref{it:proofstrategy:liftingstep} tick. It is very similar to a lemma 
of S.~C.~ Locke \cite[Lemma~1]{MR818599}, but we will re-prove 
Lemma~\ref{monotonicityofsetoflminus1pathconnectedlgeneratedfinitegraphs} 
in Section~\ref{t6t66t7y8778655454445e45}, for three reasons: first, Locke's 
assumption of $2$-connectedness and the attendant appeal to Menger's 
theorem \cite[p.~253, last line]{MR818599} were 
appropriate while 
being concerned with a (possibly small) subgraph of special nature within 
a larger $2$-connected graph but seem out of place when dealing 
with \emph{spanning} subgraphs. It feels more to the point to explicitly 
name a rank-one direct summand which is acquired as a result of the added edge. 

Second, we will need a version of Locke's lemma especially 
phrased for bipartite graphs, and this is not to be found in (but easily 
obtained by a small modification of) \cite{MR818599}. 

Third, there is a simple algebraic lemma underlying 
Lemma~\ref{monotonicityofsetoflminus1pathconnectedlgeneratedfinitegraphs}, and for 
this lemma it appears that free modules over principal 
ideal domains provide the natural generality. With a view towards possible future 
research on the role of $\mathcal{H}(X)$ vis-{\`a}-vis the $\Z$-module 
$\upZ_1(X;\Z)$ (for $X$ with high $\delta(X)$), let us opt for this generality 
right-away, at negligible additional cost, but with more insight into the 
underlying mechanism. If $R$ is a commutative ring, $M$ a free $R$-module and 
$\mathcal{B}\subseteq M$ an $R$-basis of $M$, then for every $v\in M$ we 
write $(\lambda_{\mathcal{B},v,b})_{b\in\mathcal{B}}\in R^B$ for the unique element 
of $R^B$ (cofinitely-many components zero) with 
$v = \sum_{b\in\mathcal{B}} \lambda_{\mathcal{B},v,b}\ b$. Then for every $b\in\mathcal{B}$ 
the map $\lambda_{\mathcal{B},\cdot,b} \colon v\mapsto \lambda_{\mathcal{B},v,b}$ is an 
element of $\Hom_R(M,R)$ (which elsewhere is often denoted by $b^\ast$). Moreover, we 
define $\Supp_{\mathcal{B}}(v) := 
\{ b\in \mathcal{B}\colon \lambda_{\mathcal{B},v,b}\neq 0 \} \subseteq \mathcal{B}$. 
We can now formulate a slight generalization 
of \cite[Lemma~1]{MR818599} and \cite[Corollary~3.2]{MR1057481}, 
which is the algebraic mechanism underlying 
Lemma~\ref{monotonicityofsetoflminus1pathconnectedlgeneratedfinitegraphs}:

\begin{lemma}\label{lem:decomposition}
If $R$ is a principal ideal domain, $R^\times$ its group of units, 
$M$ a finitely-generated free $R$-module, 
$\mathcal{B}\subseteq M$ an $R$-basis of $M$, $b_0\in\mathcal{B}$ an 
arbitrary element, $U\subseteq M$ an arbitrary sub-$R$-module, and 
$u_0\in U$ an arbitrary element with $\lambda_{\mathcal{B},u_0,b_0}\in R^\times$, then 
\begin{equation}\label{eq:decompositioningeneral}
U = \langle \{ u\in U \colon b_0\notin\Supp_{\mathcal{B}}(u)\} \rangle_R 
\oplus \langle u_0 \rangle_R \quad . 
\end{equation}
\end{lemma}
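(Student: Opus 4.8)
The plan is to recognize the right-hand side of \eqref{eq:decompositioningeneral} as the internal direct-sum splitting furnished by the coordinate functional attached to $b_0$. First I would introduce the $R$-linear form $\varphi := \lambda_{\mathcal{B},\cdot,b_0}|_U \colon U \to R$, the restriction to $U$ of the element $\lambda_{\mathcal{B},\cdot,b_0}\in\Hom_R(M,R)$, and observe that the generating set appearing on the right is nothing but $\ker\varphi$, since $b_0\notin\Supp_{\mathcal{B}}(u)$ is by definition the same as $\lambda_{\mathcal{B},u,b_0}=0$. Being a kernel, $U_0 := \ker\varphi$ is already an $R$-submodule of $U$, so $\langle U_0\rangle_R = U_0$ and the asserted identity \eqref{eq:decompositioningeneral} collapses to the single claim $U = U_0 \oplus \langle u_0\rangle_R$. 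Write $e := \varphi(u_0) = \lambda_{\mathcal{B},u_0,b_0}$, which by hypothesis lies in $R^\times$.

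Next I would check that the sum is direct. If $w\in U_0\cap\langle u_0\rangle_R$, then $w = r\,u_0$ for some $r\in R$ and $0 = \varphi(w) = r\,\varphi(u_0) = r\,e$; multiplying by $e^{-1}$ yields $r=0$, whence $w=0$. Thus $U_0\cap\langle u_0\rangle_R = \{0\}$.

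Finally I would verify that $U_0$ and $\langle u_0\rangle_R$ together span $U$, by exhibiting the splitting projection explicitly: for arbitrary $u\in U$ set $r := \varphi(u)\,e^{-1}\in R$; then $\varphi(u - r\,u_0) = \varphi(u) - r\,e = \varphi(u) - \varphi(u) = 0$, so $u - r\,u_0\in U_0$, and therefore $u = (u-r\,u_0) + r\,u_0 \in U_0 + \langle u_0\rangle_R$. Together with the previous step this gives the claimed direct-sum decomposition.

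I do not expect a genuine obstacle here: the only conceptual point is to notice that the bracketed generating set is already closed under the $R$-module operations (being $\ker\varphi$), so that the notation $\langle\,\cdot\,\rangle_R$ performs no actual generation, and that invertibility of the $b_0$-coordinate $e$ of $u_0$ is exactly the section condition that makes $\varphi$ split. In fact the argument uses nothing beyond $M$ being free over the commutative ring $R$ and $e\in R^\times$; the domain and principal-ideal hypotheses, convenient for the stated generality and for the envisaged application to $\upZ_1(X;\Z)$, are not exploited by this particular proof.
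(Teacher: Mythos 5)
Your proof is correct, and it takes a genuinely different route from the paper's. The paper establishes the spanning part by invoking the structure theorem for submodules of finitely generated free modules over a principal ideal domain to obtain a finite basis $\mathcal{E}$ of $U$, and then corrects each basis vector whose support contains $b_0$ by subtracting an explicit multiple of $u_0$; the finiteness of $\mathcal{E}$ is what makes the correcting sums well-defined, and the paper itself remarks afterwards that this argument breaks down when $M$ is not finitely generated. You instead observe that the generating set $\{u\in U\colon b_0\notin\Supp_{\mathcal{B}}(u)\}$ is exactly $\ker\bigl(\lambda_{\mathcal{B},\cdot,b_0}|_U\bigr)$, hence already a submodule equal to its own span, and you split off $\langle u_0\rangle_R$ via the retraction $u\mapsto \lambda_{\mathcal{B},u,b_0}\,(\lambda_{\mathcal{B},u_0,b_0})^{-1}\,u_0$. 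This is shorter, avoids the structure theorem entirely, and --- as you correctly note --- uses only that $R$ is commutative, that $\mathcal{B}$ is a basis of the free module $M$, and that the $b_0$-coordinate of $u_0$ is a unit; in particular it remains valid without the finite-generation hypothesis and without $R$ being a domain or a PID. That generality is not merely cosmetic: it removes precisely the obstacle the paper identifies as blocking an extension of the monotonicity argument to infinite (locally finite) graphs, so your version of the lemma is strictly more useful for the follow-up questions the paper raises.
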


\section{Proofs}\label{677656565658899822222222222222221}

\subsection{Proofs of the main results}\label{876556675765rr5r56565656565}

\subsubsection{Proofs of the implications in Theorem~\ref{thm:mainresults}}\label{787y5tr54r54565656556565}

As to \ref{thm:minDegOneHalfPlusGammaImpliesHamiltonGeneratedForOddOrder}, let 
$\gamma>0$ be given and invoke Theorem~\ref{thm:BoettcherSchachtTaraz2009} with 
this $\gamma$, $\rho:=2$ and $\Delta:=4$ to get a $\beta>0$ and an $n_0$, here 
denoted by $n_0'$, with the property stated there. Give this $\beta$ 
to Lemma~\ref{lem:generatingpropertiesofauxiliarestructures}.\ref{it:roughbandwidthstatements} to get an $n_0 = n_0(\beta)$, here denoted by $n_0''$, with the properties 
stated there. We now argue that with $n_0:=\max(n_0',n_0'')$ the claim 
in \ref{thm:minDegOneHalfPlusGammaImpliesHamiltonGeneratedForOddOrder} is true. 
Let $\mathfrak{X}$ be the set of all graphs $X$ with odd $f_0(X)\geq n_0$ and 
$\delta(X)\geq (\tfrac12+\gamma)f_0(X)$. Let $X\in\mathfrak{X}$ be arbitrary, 
$r:= \tfrac12(f_0(X)-1)$ and $Y:=\Pr_r^{\boxtimes}$ 
in case $f_0(X) \equiv 1\ (\mathrm{mod}\ 4)$, 
resp. $Y:=\upM_r^{\boxtimes}$ in case $f_0(X) \equiv 3\ (\mathrm{mod}\ 4)$. 
Then $Y\in\mathcal{M}_{\{f_0(\cdot)\},0}$ in 
view of Lemmas~\ref{lem:generatingpropertiesofauxiliarestructures}.\ref{3w44345465566565565r56} and \ref{lem:generatingpropertiesofauxiliarestructures}.\ref{8767856454354e54565676767}, moreover $f_0(Y)=f_0(X)$ and also $Y\hookrightarrow X$ since $\Delta(Y)=4\leq \Delta$ and Lemma~\ref{lem:generatingpropertiesofauxiliarestructures}.\ref{it:roughbandwidthstatements} in the case `$Y=\Pr_r^{\boxtimes}$' (resp. `$Y=\upM_r^{\boxtimes}$') 
allows us to apply Theorem~\ref{thm:BoettcherSchachtTaraz2009}---with the 
$\gamma$, $\rho$, $\Delta$, $\beta$, $n_0$ we already fixed---to the 
graphs $X$ and $Y$. Therefore, by 
Corollary~\ref{cor:conversion}.\ref{cor:lifting:generalgraphs} it follows 
that $X\in\mathcal{M}_{\{f_0(\cdot)\},0}$, 
in particular $X\in\mathrm{cd}_0\mathcal{C}_{\{f_0(\cdot)\}}$, which is what is claimed 
in \ref{thm:minDegOneHalfPlusGammaImpliesHamiltonGeneratedForOddOrder}. 

As to \ref{thm:minDegOneHalfPlusGammaImpliesAllOneCanAskForWhenOrderIsEven}, if 
throughout the preceding paragraph we replace 
`\ref{thm:minDegOneHalfPlusGammaImpliesHamiltonGeneratedForOddOrder}' 
by `\ref{thm:minDegOneHalfPlusGammaImpliesAllOneCanAskForWhenOrderIsEven}', 
`odd' by `even', 
`$r:=\tfrac12(f_0(X)-1)$' by `$r:=\tfrac12f_0(X)$', 
`$\Pr_r^{\boxtimes}$' by `$\Pr_r^{\boxminus}$', 
`$\upM_r^{\boxtimes}$' by `$\upM_r^{\boxminus}$', 
`$\mathcal{M}_{\{f_0(\cdot)\},0}$' by `$\mathcal{M}_{\{f_0(\cdot)\},1}$', 
`Lemma~\ref{lem:generatingpropertiesofauxiliarestructures}.\ref{3w44345465566565565r56}' by `Lemma~\ref{lem:generatingpropertiesofauxiliarestructures}.\ref{7667545344edr65656554r54d}', 
`Lemma~\ref{lem:generatingpropertiesofauxiliarestructures}.\ref{8767856454354e54565676767}' by `Lemma~\ref{lem:generatingpropertiesofauxiliarestructures}.\ref{6756667898887y7yy7765544544ew}', 
`$\Delta(Y)=4$' by `$\Delta(Y)=5$', and 
`$\mathrm{cd}_0\mathcal{C}_{f_0(\cdot)}$' by `$\mathrm{cd}_1\mathcal{C}_{f_0(\cdot)}$', 
then we obtain a proof of the codimension-one-statement in \ref{thm:minDegOneHalfPlusGammaImpliesAllOneCanAskForWhenOrderIsEven}. Moreover, if in these replacement 
instructions we replace 
`$\mathcal{M}_{\{f_0(\cdot)\},1}$' by `$\mathcal{M}_{\{f_0(\cdot)-1,f_0(\cdot)\},0}$', 
`Lemma~\ref{lem:generatingpropertiesofauxiliarestructures}.\ref{7667545344edr65656554r54d}' by `Lemma~\ref{lem:generatingpropertiesofauxiliarestructures}.\ref{87876665r5r5r5665656565}', and 
`Lemma~\ref{lem:generatingpropertiesofauxiliarestructures}.\ref{6756667898887y7yy7765544544ew}' by `Lemma~\ref{lem:generatingpropertiesofauxiliarestructures}.\ref{676776hy7h77h7767765656657}', and then apply the new instructions once more to the first 
paragraph, we obtain a proof of the second claim 
in \ref{thm:minDegOneHalfPlusGammaImpliesAllOneCanAskForWhenOrderIsEven}. 

As to \ref{thm:minDegOneFourthPlusGammaImpliesHamiltonGeneratednessInBipartiteGraphs}, 
let $\gamma>0$ be given and invoke Theorem~\ref{thm:BoettcherHeinigTaraz2010} with 
this $\gamma$ and $\Delta:=3$ to get a $\beta>0$ and an $n_0$, here 
denoted by $n_0'$, with the property stated there. Give this $\beta$ 
to Lemma~\ref{lem:generatingpropertiesofauxiliarestructures}.\ref{it:roughbandwidthstatements} to get an $n_0 = n_0(\beta)$, here denoted by $n_0''$, with the 
properties stated there. We now argue that with $n_0:=\max(n_0',n_0'')$ the claim 
in \ref{thm:minDegOneFourthPlusGammaImpliesHamiltonGeneratednessInBipartiteGraphs} 
is true. Let $\mathrm{b}\mathfrak{X}$ be the set of all square bipartite 
graphs $X$ with $f_0(X)\geq n_0$ and $\delta(X)\geq (\tfrac14+\gamma)f_0(X)$. 
Let $X\in\mathfrak{X}$ be arbitrary and set $r:=\tfrac12 f_0(X)$ and $Y:=\CL_r$. 
Then $Y\in\mathrm{b}\mathcal{M}_{\{f_0(\cdot)\},0}$ in 
view of Lemma~\ref{lem:generatingpropertiesofauxiliarestructures}.\ref{lem:it:bipartitecyclicladder:generator}, moreover $f_0(Y)=f_0(X)$ and also $Y\hookrightarrow X$ 
since $\Delta(Y)=3\leq\Delta$ and Lemma~\ref{lem:generatingpropertiesofauxiliarestructures}.\ref{it:roughbandwidthstatements} in the case $Y=\CL_r$ allows us to apply 
Theorem~\ref{thm:BoettcherHeinigTaraz2010}---with the 
$\gamma$, $\rho$, $\Delta$, $\beta$, $n_0$ we already fixed---to the 
graphs $X$ and $Y$. Therefore, by 
Corollary~\ref{cor:conversion}.\ref{cor:lifting:bipartitegraphs} it follows 
that $X\in\mathrm{b}\mathcal{M}_{\{f_0(\cdot)\},0}$, 
in particular $X\in\mathrm{bcd}_0\mathcal{C}_{\{f_0(\cdot)\}}$, which is what is 
claimed in \ref{thm:minDegOneFourthPlusGammaImpliesHamiltonGeneratednessInBipartiteGraphs}. 

As to \ref{thm:minDegTwoThirds}, let $\mathfrak{X}$ be the set of all graphs $X$ 
with $f_0(X)\geq 2\cdot 10^8$ and $\delta(X)\geq\tfrac23 f_0(X)$. 
Let $X\in\mathfrak{X}$ be arbitrary. Then 
Theorem~\ref{thm:KomlosSarkozySzemeredi1996} 
guarantees that $\upC_{f_0(X)}^2\hookrightarrow X$. If $f_0(X)$ is odd, then by 
combining Corollary~\ref{cor:conversion}.\ref{cor:lifting:generalgraphs} and 
Lemma~\ref{lem:generatingpropertiesofauxiliarestructures}.\ref{6t6t655r5r55r56656565rtrt}, it follows that $X\in\mathcal{M}_{\{f_0(\cdot)\},0}$, 
in particular $X\in\mathrm{cd}_0\mathcal{C}_{\{f_0(\cdot)\}}$, which proves 
\ref{thm:minDegTwoThirds} in the case of odd $f_0$ . If $f_0(X)$ is even, 
then \ref{thm:minDegTwoThirds} follows by combining 
Corollary~\ref{cor:conversion}.\ref{cor:lifting:generalgraphs} with 
Lemma~\ref{lem:generatingpropertiesofauxiliarestructures}.\ref{876676545545655rr565667}, resp. Lemma~\ref{lem:generatingpropertiesofauxiliarestructures}.\ref{898u889y8778y6t5r5454e446446}. 
All the implications in Theorem~\ref{thm:mainresults} have now been proved. 

\subsubsection{Proof of the claim about weakening the hypothesis of \ref{thm:minDegOneHalfPlusGammaImpliesHamiltonGeneratedForOddOrder} in Theorem~\ref{thm:mainresults}}\label{78676r55r5655566567878888u}

\begin{figure} 
\begin{center}
\input{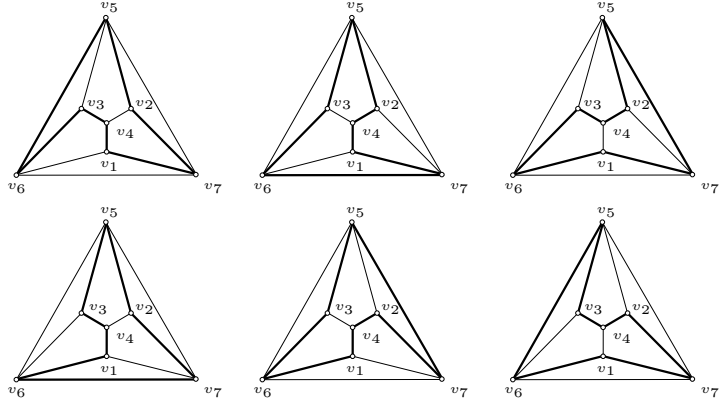}
\caption{A counterexample which proves that a graph having several properties 
which intuitively may seem conducive to the property of being 
Hamilton-generated, can nevertheless fail to have it: the 
graph $\mathrm{CE}_{{\tiny \ref{thm:minDegOneHalfPlusGammaImpliesHamiltonGeneratedForOddOrder}}}$ 
underlying Figure~\ref{3212bh21b1231nk123kn213nmk21} has odd $f_0$, 
is $3$-vertex-connected, only barely fails to satisfy the Dirac condition, 
is pancyclic (despite with $f_1 = 12 \ngeq 12.25$ narrowly missing Bondy's sufficient 
size-condition for the pancyclicity of a hamiltonian graph \cite[p.~81]{MR0285424}), 
is Hamilton-connected and has each of its edges contained in a Hamilton circuit. 
And yet it has its cycle space \emph{not} generated by its Hamilton circuits (all of 
which are shown in the figure).}\label{3212bh21b1231nk123kn213nmk21}
\end{center}
\end{figure}

Let $\mathrm{CE}_{{\tiny \ref{thm:minDegOneHalfPlusGammaImpliesHamiltonGeneratedForOddOrder}}}$ 
denote the seven-vertex graph with 
$\upV(\mathrm{CE}_{{\tiny \ref{thm:minDegOneHalfPlusGammaImpliesHamiltonGeneratedForOddOrder}}})$ $:=$ 
$\{v_1,v_2,v_3,v_4,v_5,v_6,v_7\}$ and 
$\upE(\mathrm{CE}_{{\tiny \ref{thm:minDegOneHalfPlusGammaImpliesHamiltonGeneratedForOddOrder}}})$ 
$:=$ $\{$ $\{v_1,v_4\},$ $\{v_1,v_6\},$ $\{v_1,v_7\},$ 
$\{v_2,v_4\},$ $\{v_2,v_5\},$  $\{v_2,v_7\},$ 
$\{v_3,v_4\},$ $\{v_3,v_5\},$ $\{v_3,v_6\},$ 
$\{v_5,v_6\},$ $\{v_5,v_7\},$ $\{v_6,v_7\}$ $\}$. 
(This is the graph underlying Figure~\ref{3212bh21b1231nk123kn213nmk21}.)
Then $\tfrac12 f_0(\mathrm{CE}_{{\tiny \ref{thm:minDegOneHalfPlusGammaImpliesHamiltonGeneratedForOddOrder}}})$ $=$ $3.5$ $\nleq$ $3$ $=$ $\delta(\mathrm{CE}_{{\tiny \ref{thm:minDegOneHalfPlusGammaImpliesHamiltonGeneratedForOddOrder}}})$, i.e. $\mathrm{CE}_{{\tiny \ref{thm:minDegOneHalfPlusGammaImpliesHamiltonGeneratedForOddOrder}}}$ barely misses the Dirac threshold. The graph $\mathrm{CE}_{{\tiny \ref{thm:minDegOneHalfPlusGammaImpliesHamiltonGeneratedForOddOrder}}}$ has odd $f_0$, 
is $3$-vertex-connected, pancyclic (i.e. contains at least one circuit of each of 
all possible lengths $3,\dotsc, f_0(X)$), Hamilton-connected and has each of its 
edges contained in a Hamilton circuit. Therefore the following fact (which proves 
the claim made in Theorem \ref{thm:mainresults} about weakening 
\ref{thm:minDegOneHalfPlusGammaImpliesHamiltonGeneratedForOddOrder}) also 
shows that the open question 
\ref{question:doesdiracthresholdalreadyimplyhamiltongenerated} in 
Section~\ref{sec:concludingremarks} can easily acquire a negative answer 
if its hypotheses are slightly weakened:

\begin{proposition}\label{3541567456216753254}
$\dim_{\Z/2}\bigl(\upZ_1(\mathrm{CE}_{{\tiny \ref{thm:minDegOneHalfPlusGammaImpliesHamiltonGeneratedForOddOrder}}};\Z/2)/\bigl\langle \mathcal{H}(\mathrm{CE}_{{\tiny \ref{thm:minDegOneHalfPlusGammaImpliesHamiltonGeneratedForOddOrder}}})\bigr\rangle_{\Z/2}\bigr)=1$
\end{proposition}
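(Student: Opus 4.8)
\emph{Setup.} Write $X$ for the seven-vertex graph of the statement. Since $f_0(X)=7$ and $f_1(X)=12$, we have $\dim_{\Z/2}\upZ_1(X;\Z/2)=\beta_1(X)=f_1(X)-f_0(X)+1=6$. The plan is to enumerate $\mathcal{H}(X)$ completely and then to show, by a single $\Z/2$-rank computation, that $\langle\{\upc_C\colon C\in\mathcal{H}(X)\}\rangle_{\Z/2}$ has dimension exactly $5$; the asserted codimension is then $6-5=1$.

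\emph{Enumeration.} The engine of the enumeration is the vertex $v_4$, with $\upN_X(v_4)=\{v_1,v_2,v_3\}$ and hence $\lvert\upN_X(v_4)\rvert=3$. Every Hamilton circuit therefore uses exactly two of the three edges at $v_4$, i.e. contains a subpath $v_iv_4v_j$ with $\{i,j\}\subseteq\{1,2,3\}$, and the three choices of $\{i,j\}$ partition $\mathcal{H}(X)$ into three classes. These classes are permuted by the order-three automorphism $\sigma$ of $X$ that fixes $v_4$ and cyclically permutes $(v_1\,v_2\,v_3)$ and $(v_5\,v_6\,v_7)$ (one checks directly that $\sigma$ preserves $\upE(X)$), so it suffices to treat the class $\{i,j\}=\{1,2\}$ and then apply $\sigma$ and $\sigma^2$. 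Given the subpath $v_1v_4v_2$, the edge $v_3v_4$ is unused, so the two circuit-edges at $v_3$ lie in $\upN_X(v_3)\setminus\{v_4\}=\{v_5,v_6\}$, forcing the subpath $v_5v_3v_6$; the only still-unplaced vertex is $v_7$. Assembling the two subpaths and $v_7$ into a single $7$-circuit reduces to choosing the link between an end of $v_1v_4v_2$ and an end of $v_5v_3v_6$, and the only admissible links are $v_1v_6$ and $v_2v_5$; each extends uniquely (through $v_7$) to a Hamilton circuit, namely $H_1=v_1v_4v_2v_7v_5v_3v_6v_1$ and $H_2=v_2v_4v_1v_7v_6v_3v_5v_2$. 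Thus this class contains exactly two circuits, and $\mathcal{H}(X)=\{H_1,\sigma H_1,\sigma^2 H_1,H_2,\sigma H_2,\sigma^2 H_2\}$ consists of exactly six Hamilton circuits.

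\emph{Rank computation.} Fixing an ordering of $\upE(X)$ and writing each $\upc_C$ as a $0/1$-vector in $(\Z/2)^{\upE(X)}$, two facts finish the argument. First, each of the nine non-triangle edges of $X$ lies in exactly four of the six circuits and each of the three triangle edges $v_5v_6,v_5v_7,v_6v_7$ in exactly two; all these incidences being even, $\sum_{C\in\mathcal{H}(X)}\upc_C=0$, so the six vectors are linearly dependent and $\langle\mathcal{H}(X)\rangle_{\Z/2}$ is already generated by the five circuits $H_1,\sigma H_1,\sigma^2 H_1,H_2,\sigma H_2$. Second, Gaussian elimination over $\Z/2$ on these five vectors produces five pivots, so they are linearly independent. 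Hence $\dim_{\Z/2}\langle\mathcal{H}(X)\rangle_{\Z/2}=5$ and $\dim_{\Z/2}\bigl(\upZ_1(X;\Z/2)/\langle\mathcal{H}(X)\rangle_{\Z/2}\bigr)=6-5=1$.

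\emph{Main obstacle.} The only delicate point is the completeness of the enumeration: a single overlooked Hamilton circuit could in principle raise the rank to $6$ and collapse the quotient to $0$. This risk is removed by the degree-$3$ analysis at $v_4$ together with the forced placement of $v_3$, which leaves genuinely no freedom beyond the two links identified above, while $\sigma$ guarantees that the two remaining classes behave identically. The concluding rank computation is routine but must be done carefully, since it is precisely the step that separates codimension $1$ from codimension $0$.
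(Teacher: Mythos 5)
Your proof is correct and follows essentially the same route as the paper's: enumerate all six Hamilton circuits of the graph and show that the corresponding $12\times 6$ edge--circuit incidence matrix over $\Z/2$ has rank exactly $5$, whence the codimension is $6-5=1$. The only differences are refinements that both check out --- you actually justify the completeness of the enumeration (via the degree-$3$ vertex $v_4$ and the order-three automorphism, where the paper merely appeals to the smallness of the graph), and you obtain the upper bound $\mathrm{rank}\leq 5$ from the observation that every edge lies in an even number of Hamilton circuits, so that $\sum_{C\in\mathcal{H}(X)}\upc_C=0$.
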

\begin{proof}
The smallness of 
$\mathrm{CE}_{{\tiny \ref{thm:minDegOneHalfPlusGammaImpliesHamiltonGeneratedForOddOrder}}}$ makes it 
easy to check that $\mathcal{H}(\mathrm{CE}_{{\tiny \ref{thm:minDegOneHalfPlusGammaImpliesHamiltonGeneratedForOddOrder}}})$  consists precisely of the six circuits (shown in 
Figure~\ref{3212bh21b1231nk123kn213nmk21}) 
$C_1 := v_1v_7v_2v_5v_6v_3v_4v_1$, 
$C_2 := v_1v_7v_6v_3v_5v_2v_4v_1$, 
$C_3 := v_1v_7v_5v_2v_4v_3v_6v_1$, 
$C_4 := v_1v_6v_7v_2v_5v_3v_4v_1$, 
$C_5 := v_1v_6v_3v_5v_7v_2v_4v_1$, 
$C_6 := v_1v_6v_5v_3v_4v_2v_7v_1$. 
If the standard basis of $\upC_1(\mathrm{CE}_{{\tiny \ref{thm:minDegOneHalfPlusGammaImpliesHamiltonGeneratedForOddOrder}}};\Z/2)$ is labelled 
$\upe_1$ $:=$ $\upc_{v_1v_4}$, 
$\upe_2$ $:=$ $\upc_{v_1v_6}$, 
$\upe_3$ $:=$ $\upc_{v_1v_7}$, 
$\upe_4$ $:=$ $\upc_{v_2v_4}$, 
$\upe_5$ $:=$ $\upc_{v_2v_5}$, 
$\upe_6$ $:=$ $\upc_{v_2v_7}$, 
$\upe_7$ $:=$ $\upc_{v_3v_4}$, 
$\upe_8$ $:=$ $\upc_{v_3v_5}$, 
$\upe_9$ $:=$ $\upc_{v_3v_6}$, 
$\upe_{10}$ $:=$ $\upc_{v_5v_6}$, 
$\upe_{11}$ $:=$ $\upc_{v_5v_7}$, 
$\upe_{12}$ $:=$ $\upc_{v_6v_7}$, then w.r.t. to this basis the Hamilton circuits 
$C_1$, $\dotsc$, $C_6$ give rise to the matrix 
shown in \eqref{23786324874872117204}, which has 
$\Z/2$-rank $5$. 
\begin{equation}\label{23786324874872117204}
\begin{smallmatrix}
& & C_1 & C_2 & C_3 & C_4 & C_5 & C_6  \\
& & & & & & & \\
\upe_1\  &     & 1 & 1 & 0 & 1 & 1 & 0 \\
\upe_2\  &     & 0 & 0 & 1 & 1 & 1 & 1 \\
\upe_3\  &     & 1 & 1 & 1 & 0 & 0 & 1 \\
\upe_4\  &     & 0 & 1 & 1 & 0 & 1 & 1 \\
\upe_5\  &     & 1 & 1 & 1 & 1 & 0 & 0 \\
\upe_6\  &     & 1 & 0 & 0 & 1 & 1 & 1 \\
\upe_7\  &     & 1 & 0 & 1 & 1 & 0 & 1 \\
\upe_8\  &     & 0 & 1 & 0 & 1 & 1 & 1 \\
\upe_9\  &     & 1 & 1 & 1 & 0 & 1 & 0 \\
\upe_{10}\  &   & 1 & 0 & 0 & 0 & 0 & 1 \\
\upe_{11}\  &   & 0 & 0 & 1 & 0 & 1 & 0 \\
\upe_{12}\  &   & 0 & 1 & 0 & 1 & 0 & 0 
\end{smallmatrix}
\end{equation}
Therefore $\bigl\langle \mathcal{H}(\mathrm{CE}_{{\tiny \ref{thm:minDegOneHalfPlusGammaImpliesHamiltonGeneratedForOddOrder}}})\bigr\rangle_{\Z/2}$ is a 
$5$-dimensional subspace of $\upZ_1(\mathrm{CE}_{{\tiny \ref{thm:minDegOneHalfPlusGammaImpliesHamiltonGeneratedForOddOrder}}};\Z/2)$, which has dimension $\beta_1(\mathrm{CE}_{{\tiny \ref{thm:minDegOneHalfPlusGammaImpliesHamiltonGeneratedForOddOrder}}})$ $=$ $f_1(\mathrm{CE}_{{\tiny \ref{thm:minDegOneHalfPlusGammaImpliesHamiltonGeneratedForOddOrder}}})$ $-$ $f_0(\mathrm{CE}_{{\tiny \ref{thm:minDegOneHalfPlusGammaImpliesHamiltonGeneratedForOddOrder}}})$ $+$ $1$ $=$ $12$ $-$ $7$ $+$ $1$ $=$ $6$. 
This proves Proposition~\ref{3541567456216753254}. 
\end{proof}

\subsubsection{Proof of the claim about weakening the hypothesis of \ref{thm:minDegOneFourthPlusGammaImpliesHamiltonGeneratednessInBipartiteGraphs} in Theorem~\ref{thm:mainresults}}\label{7y6t65r44r5676756565}

Let 
$\mathrm{CE}_{{\tiny \ref{thm:minDegOneFourthPlusGammaImpliesHamiltonGeneratednessInBipartiteGraphs}}}$ 
denote the six by six square bipartite graph with 
$\upV(\mathrm{CE}_{{\tiny \ref{thm:minDegOneFourthPlusGammaImpliesHamiltonGeneratednessInBipartiteGraphs}}})$ $:=$ $\{ v_1,\dotsc, v_6\}$ $\sqcup$ $\{v_7,\dotsc, v_{12}\}$ (bipartition classes 
indicated) and $\upE(\mathrm{CE}_{{\tiny \ref{thm:minDegOneFourthPlusGammaImpliesHamiltonGeneratednessInBipartiteGraphs}}})$ $:=$ $\{$ $v_1v_7,$ $v_1v_8,$ $v_1v_9,$ $v_1v_{12},$ 
$v_2v_7,$ $v_2v_8,$ $v_2v_9,$ $v_3v_7,$ $v_3v_8,$ $v_3v_9,$ 
$v_4v_9,$ $v_4v_{10},$ $v_4v_{11},$ $v_5v_{10},$ $v_5v_{11},$ $v_5v_{12},$ 
$v_6v_{10},$ $v_6v_{11},$ $v_6v_{12}$ $\}$. 
(This is the graph in Figure~\ref{5612512543225632657328}.)
Then $\tfrac14 f_0(\mathrm{CE}_{{\tiny \ref{thm:minDegOneFourthPlusGammaImpliesHamiltonGeneratednessInBipartiteGraphs}}}) = \delta(\mathrm{CE}_{{\tiny \ref{thm:minDegOneFourthPlusGammaImpliesHamiltonGeneratednessInBipartiteGraphs}}}) = 3$ and $\mathrm{CE}_{{\tiny \ref{thm:minDegOneFourthPlusGammaImpliesHamiltonGeneratednessInBipartiteGraphs}}}$ is hamiltonian. We will now prove by a short argument 
that $\langle \mathcal{H}(\mathrm{CE}_{{\tiny \ref{thm:minDegOneFourthPlusGammaImpliesHamiltonGeneratednessInBipartiteGraphs}}})\rangle_{\Z/2}$ has \emph{at least} codimension one in $\upZ_1(\mathrm{CE}_{{\tiny \ref{thm:minDegOneFourthPlusGammaImpliesHamiltonGeneratednessInBipartiteGraphs}}};\Z/2)$, which is enough to establish $\mathrm{CE}_{{\tiny \ref{thm:minDegOneFourthPlusGammaImpliesHamiltonGeneratednessInBipartiteGraphs}}}$ as a counterexample of the claimed kind. (By 
determining all $16$ Hamilton circuits of $\mathrm{CE}_{{\tiny \ref{thm:minDegOneFourthPlusGammaImpliesHamiltonGeneratednessInBipartiteGraphs}}}$ and subsequently computing the $\Z/2$-rank 
of a $12$ by $16$ matrix with zero-one entries it is possible to show that 
$\dim_{\Z/2} \langle\mathcal{H}(\mathrm{CE}_{{\tiny \ref{thm:minDegOneFourthPlusGammaImpliesHamiltonGeneratednessInBipartiteGraphs}}}) \rangle_{\Z/2} = 7 = \dim_{\Z/2} \upZ_1(\mathrm{CE}_{{\tiny \ref{thm:minDegOneFourthPlusGammaImpliesHamiltonGeneratednessInBipartiteGraphs}}};\Z/2) - 1$, i.e. the codimension actually is equal to $1$.)

\begin{proposition}\label{3476532561255674253476547356}
$\dim_{\Z/2} \bigl(\upZ_1(\mathrm{CE}_{{\tiny \ref{thm:minDegOneFourthPlusGammaImpliesHamiltonGeneratednessInBipartiteGraphs}}};\Z/2) / \langle\mathcal{H}(\mathrm{CE}_{{\tiny \ref{thm:minDegOneFourthPlusGammaImpliesHamiltonGeneratednessInBipartiteGraphs}}}) \rangle_{\Z/2}\bigr) \geq 1$
\end{proposition}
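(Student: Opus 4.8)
The plan is to produce a single nonzero linear functional on $\upZ_1(X;\Z/2)$ that vanishes on every Hamilton circuit, where I abbreviate $X := \mathrm{CE}_{{\tiny \ref{thm:minDegOneFourthPlusGammaImpliesHamiltonGeneratednessInBipartiteGraphs}}}$; this immediately forces $\langle\mathcal{H}(X)\rangle_{\Z/2}$ into a proper (hence codimension-$\geq 1$) subspace of $\upZ_1(X;\Z/2)$, so that no enumeration of Hamilton circuits is needed. First I would isolate the vertex set $P := \{v_1,v_2,v_3,v_7,v_8,v_9\}$ and observe that the only edges of $X$ joining $P$ to its complement are $v_1v_{12}$ and $v_4v_9$, so that $\{v_1v_{12},v_4v_9\}$ is an edge cut of size two. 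Since every Hamilton circuit $C$ meets both sides of this cut and can cross it only along these two edges, it must use each of $v_1v_{12}$ and $v_4v_9$ exactly once; consequently the edges of $C$ lying inside $P$ form a Hamilton path of the induced subgraph $X[P]$ running from $v_1$ to $v_9$ (the $P$-endpoints of the two cut edges).

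The key observation I would then record is that the edge $v_1v_9$ is used by no Hamilton circuit of $X$. Indeed, if some $C\in\mathcal{H}(X)$ used $v_1v_9$, then---because $v_1$ must also use the cut edge $v_1v_{12}$ while $v_9$ must also use the cut edge $v_4v_9$---the two circuit-edges at $v_1$ and at $v_9$ would be completely determined, producing the subpath $v_{12}\,v_1\,v_9\,v_4$ of $C$; but then the portion of $C$ inside $P$ would be the single edge $v_1v_9$, leaving $v_2,v_3,v_7,v_8$ unreachable and contradicting that $C$ is a Hamilton circuit. Equivalently, $X[P]$ is the complete bipartite graph on parts $\{v_1,v_2,v_3\}$ and $\{v_7,v_8,v_9\}$, and the endpoints of a Hamilton path of this graph lie at path-distance five, so they cannot be adjacent along the path.

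Finally I would let $\phi\colon\upC_1(X;\Z/2)\to\Z/2$ be the coordinate functional reading off the coefficient of the standard basis vector $\upc_{v_1v_9}$, restricted to $\upZ_1(X;\Z/2)$. By the previous step $\phi$ annihilates $\{\upc_C\colon C\in\mathcal{H}(X)\}$, hence all of $\langle\mathcal{H}(X)\rangle_{\Z/2}$. On the other hand $\phi$ is not identically zero on $\upZ_1(X;\Z/2)$, since the $4$-circuit $C':=v_1v_9v_2v_7v_1$ has $v_1v_9\in C'$ and therefore $\phi(\upc_{C'})=1$. Thus $\langle\mathcal{H}(X)\rangle_{\Z/2}\subseteq\ker\phi\subsetneq\upZ_1(X;\Z/2)$, which is exactly the asserted inequality $\dim_{\Z/2}\bigl(\upZ_1(X;\Z/2)/\langle\mathcal{H}(X)\rangle_{\Z/2}\bigr)\geq 1$. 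The only genuinely creative step is spotting the two-edge cut and the resulting forbidden edge $v_1v_9$; once these are in hand the rest is routine, and I expect the one point needing care to be the claim that each Hamilton circuit restricts to a Hamilton path of $X[P]$ between the two cut-attachment vertices $v_1$ and $v_9$.
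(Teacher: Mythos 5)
Your proposal is correct and rests on the same pivotal fact as the paper's proof, namely that the edge $v_1v_9$ lies on no Hamilton circuit of $\mathrm{CE}_{{\tiny \ref{thm:minDegOneFourthPlusGammaImpliesHamiltonGeneratednessInBipartiteGraphs}}}$, so that $\langle\mathcal{H}(\cdot)\rangle_{\Z/2}$ sits inside a codimension-one subspace of the cycle space. The paper certifies this forbidden edge slightly more directly---$\{v_1,v_9\}$ is a vertex separator, so no Hamilton path can join $v_1$ to $v_9$ through the rest of the graph---and then counts dimensions via $\upZ_1$ of the edge-deleted graph, whereas you use the two-edge cut $\{v_1v_{12},v_4v_9\}$ and a coordinate functional with the witness $4$-circuit $v_1v_9v_2v_7v_1$; these are equivalent packagings of the same argument.
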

\begin{proof}
It is enough to make the following simple observation: since $\{v_1,v_9\}$ is a 
separator of $\mathrm{CE}_{{\tiny \ref{thm:minDegOneFourthPlusGammaImpliesHamiltonGeneratednessInBipartiteGraphs}}}$, the edge $\{v_1,v_9\}$ cannot be an edge of any Hamilton circuit of 
$\mathrm{CE}_{{\tiny \ref{thm:minDegOneFourthPlusGammaImpliesHamiltonGeneratednessInBipartiteGraphs}}}$. 
Therefore the set of all Hamilton circuits of 
$\mathrm{CE}_{{\tiny \ref{thm:minDegOneFourthPlusGammaImpliesHamiltonGeneratednessInBipartiteGraphs}}}$ 
\emph{equals} the set of all Hamilton circuits of the graph 
$\mathrm{CE}_{{\tiny \ref{thm:minDegOneFourthPlusGammaImpliesHamiltonGeneratednessInBipartiteGraphs}}}-\{v_1,v_9\}$ obtained after deleting $\{v_1,v_9\}$ from 
$\mathrm{CE}_{{\tiny \ref{thm:minDegOneFourthPlusGammaImpliesHamiltonGeneratednessInBipartiteGraphs}}}$. 
This in particular implies the first equality in the calculation 
$\dim_{\Z/2} \langle\mathcal{H}(\mathrm{CE}_{{\tiny \ref{thm:minDegOneFourthPlusGammaImpliesHamiltonGeneratednessInBipartiteGraphs}}}) \rangle_{\Z/2}$ $=$ $\dim_{\Z/2} \langle\mathcal{H}(\mathrm{CE}_{{\tiny \ref{thm:minDegOneFourthPlusGammaImpliesHamiltonGeneratednessInBipartiteGraphs}}} - \{v_1,v_9\}) \rangle_{\Z/2}$ $\leq$ (since the dimension of a subspace of a vector space is bounded 
by the dimension of the latter's dimension) $\leq$ $\dim_{\Z/2} \upZ_1(\mathrm{CE}_{{\tiny \ref{thm:minDegOneFourthPlusGammaImpliesHamiltonGeneratednessInBipartiteGraphs}}} - \{v_1,v_9\};\Z/2)$ $=$ (by the Euler--Poincar{\'e} relation) $=$ $\dim_{\Z/2} \upZ_1(\mathrm{CE}_{{\tiny \ref{thm:minDegOneFourthPlusGammaImpliesHamiltonGeneratednessInBipartiteGraphs}}};\Z/2)$~$-$~$1$, which is 
just what is claimed in Proposition~\ref{3476532561255674253476547356}.
\end{proof}

\begin{figure} 
\begin{center}
\input{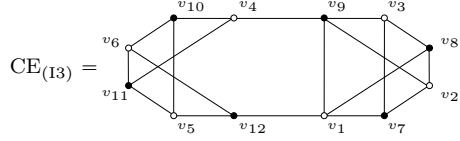}
\caption{A counterexample which proves that if in 
\ref{thm:minDegOneFourthPlusGammaImpliesHamiltonGeneratednessInBipartiteGraphs} the 
hypothesis `$\delta(X)\geq(\tfrac14 + \gamma)f_0(X)$' is weakened to 
`$\delta(X)\geq \tfrac14 f_0(X)$ and $X$~hamiltonian' the implication becomes false: 
the graph 
$\mathrm{CE}_{{\tiny \ref{thm:minDegOneFourthPlusGammaImpliesHamiltonGeneratednessInBipartiteGraphs}}}$ has 
$\delta = 3 = \tfrac14 f_0$ and is hamiltonian, 
yet $\langle \mathcal{H}(\cdot) \rangle_{\Z/2}$ has codimension one in 
$\upZ_1(\cdot;\Z/2)$. If the edge $\{v_1,v_9\}$ were omitted, we \emph{would} have 
$\langle \mathcal{H}(\cdot) \rangle_{\Z/2} = \upZ_1(\cdot;\Z/2)$, hence the resulting 
graph $\mathrm{CE}_{{\tiny \ref{thm:minDegOneFourthPlusGammaImpliesHamiltonGeneratednessInBipartiteGraphs}}}-\{v_1,v_9\}$ would---while still satisfying the weakenend hypotheses with 
respect to which $\mathrm{CE}_{{\tiny \ref{thm:minDegOneFourthPlusGammaImpliesHamiltonGeneratednessInBipartiteGraphs}}}$ \emph{is} a counterexample---cease to be a counterexample. (This 
does not contradict the fact that `Hamilton-laceable and Hamilton-generated' is a 
monotone property of bipartite graphs: 
$\mathrm{CE}_{{\tiny \ref{thm:minDegOneFourthPlusGammaImpliesHamiltonGeneratednessInBipartiteGraphs}}}-\{v_1,v_9\}$ is not Hamilton-laceable.) The author could not find a counterexample 
showing that 
\ref{thm:minDegOneFourthPlusGammaImpliesHamiltonGeneratednessInBipartiteGraphs} 
would become false were `$\delta(X)\geq(\tfrac14 + \gamma)f_0(X)$' weakened only 
to `$\delta(X)\geq \tfrac14 f_0(X)$ and $X$~Hamilton-laceable.'}
\label{5612512543225632657328}
\end{center}
\end{figure}

\subsection{Proofs of the auxiliary results}\label{t6t66t7y8778655454445e45}

\begin{proof}[Proof of 
Lemma~\ref{monotonicityofsetoflminus1pathconnectedlgeneratedfinitegraphs}]
First note that for both $\mathcal{M}_{\mathfrak{L},\xi}$ and 
$\mathrm{b}\mathcal{M}_{\mathfrak{L},\xi}$, it is obvious that the sets are fixed 
(as sets) under any graph isomorphism, i.e. both are graph properties.

As to the monotonicity claim in \ref{lockelemmaforgeneralgraphs}, 
if $\mathcal{M}_{\mathfrak{L},\xi}=\emptyset$, the claim is vacuously true. Otherwise, 
let $X\in \mathcal{M}_{\mathfrak{L},\xi}$ be an arbitrary 
element  and let $e\in \binom{\upV(X)}{2}\setminus \upE(X)$ be arbitrary. We will 
use the abbreviation $X+e := (\upV(X),\upE(X)\sqcup\{e\})$. We have to prove 
$X+e \in \mathcal{M}_{\mathfrak{L},\xi}$. 
Trivially, $X + e\in\mathcal{CO}_{\mathfrak{L}-1}$. What has to be justified is 
that $X+e$ $\in$ $\mathrm{cd}_{\xi}\mathcal{C}_{\mathfrak{L}}$.  
Since $X\in\mathcal{CO}_{\mathfrak{L}-1}$, there exists in $X$ a path $P$ with length 
in $\{ l-1\colon l\in \mathfrak{L} \}$ linking the endvertices of $e$ and we have 
$e\notin \upE(P)$ since $e\notin \upE(X)$. Choose any such $P$. We now use 
Lemma~\ref{lem:decomposition} twice: let $R := \Z/2$, $M := \upC_1(X+e;\Z/2)$, 
$\mathcal{B} := \{\upc_{\tilde{e}}\colon \tilde{e}\in\upE(X+e) \}$ 
(the standard basis of $\upC_1(X+e;\Z/2)$) and $b_0 := e$. Since (with $\{u,v\}:=e$) 
the circuit $C := uPvu$ satisfies both $C \in \mathcal{C}_{\mathfrak{L}}(X+e)$ and 
$C\in \upZ_1(X+e;\Z/2)$, it follows that whether we define 
$U := \langle \mathcal{C}_{\mathfrak{L}}(X + e) \rangle_{\Z/2}$ or 
$U := \upZ_1(X+e;\Z/2)$, in both cases we have $u_0 := \upc_C \in U$, and therefore 
Lemma~\ref{lem:decomposition} gives us 

{\small
\begin{minipage}[b]{0.5\linewidth}
\begin{enumerate}[label={\rm(ds\arabic{*})},leftmargin=3em,start=1]
\item\label{directsum:Circl} $\langle \mathcal{C}_{\mathfrak{L}}(X + e) \rangle_{\Z/2}$ 
$=$ $\langle \mathcal{C}_{\mathfrak{L}}(X) \rangle_{\Z/2} \oplus 
\langle \upc_C \rangle_{\Z/2}$ \quad ,
\end{enumerate}
\end{minipage}
\begin{minipage}[b]{0.5\linewidth}
\begin{enumerate}[label={\rm(ds\arabic{*})},leftmargin=3em,start=2]
\item\label{directsum:Z1} 
$\upZ_1(X + e;\Z/2) = \upZ_1(X;\Z/2) \oplus \langle \upc_C \rangle_{\Z/2}$ \quad . 
\end{enumerate}
\end{minipage}
}
The direct sum decompositions \ref{directsum:Circl} and \ref{directsum:Z1} imply 
$\dim_{\Z/2} \bigl ( \upZ_1(X + e;\Z/2) / 
\langle \mathcal{C}_{\mathfrak{L}}(X + e) \rangle_{\Z/2} \bigr )$ $=$ 
$\dim_{\Z/2} \bigl ( \upZ_1(X;\Z/2) / 
\langle \mathcal{C}_{\mathfrak{L}}(X) \rangle_{\Z/2} \bigr )$ $=$ $\xi$ and therefore 
$X+e$ $\in$ $\mathrm{cd}_{\xi}\mathcal{C}_{\mathfrak{L}}$, completing the proof of 
statement \ref{lockelemmaforgeneralgraphs}. 
As to \ref{lockelemmaforbipartitegraphs}, it suffices to note that the proof 
of \ref{lockelemmaforgeneralgraphs} may be repeated to yield a 
proof of \ref{lockelemmaforbipartitegraphs}, the only change required being to 
restrict $e$ to be an edge whose addition keeps the graph bipartite and 
to replace `$\mathcal{CO}_{\mathfrak{L}-1}$' by `$\mathcal{LA}_{\mathfrak{L}-1}$'. 
\end{proof} 
\begin{proof}[{Proof of Lemma~\ref{lem:decomposition}}]
The sum is obviously direct: $b_0\in \Supp_{\mathcal{B}}(u_0)$ 
while $b_0\notin\Supp_{\mathcal{B}}(v)$ for every 
$v\in \langle \{ u\in U \colon b_0\notin \Supp_{\mathcal{B}}(u) \} \rangle_R$, hence 
the intersection of the summands is $\{0\}$. What is to be justified is that 
$U \subseteq \langle \{ u\in U \colon b_0\notin \Supp_{\mathcal{B}}(u) \} \rangle_R$ 
$+$ $\langle u_0 \rangle_R$. So let $v\in U$ be arbitrary. 
By a well-known theorem (e.g.~\cite[Theorem~6.1]{MR2330890}), since $M$ is a free 
module over a principal ideal domain, so is $U$, and there exists a 
finite $R$-basis $\mathcal{E} \in \binom{U}{\rk_R(U)}$ of $U$. 
Let $\mathcal{E}_0 := \{ e\in \mathcal{E}\colon b_0\in \Supp_{\mathcal{B}}(e)\}$. 
Since $\lambda_{\mathcal{B},\cdot,b_0}\in\Hom_R(M,R)$, we have 
$\lambda_{\mathcal{B},\cdot,b_0}$ $\bigl($ $($ $\sum_{e\in\mathcal{E}\setminus\mathcal{E}_0}$ 
$\lambda_{\mathcal{E},v,e}$ $e$ $)$ $+$ $($ $\sum_{e\in\mathcal{E}_0}$ 
$\lambda_{\mathcal{E},v,e}$ $($ $e$ $-$ $\lambda_{\mathcal{B},e,b_0}$ 
$(\lambda_{\mathcal{B},u_0,b_0})^{-1}$ $u_0$ $)$ $)$  $\bigr)$ $=$ $0$, and 
therefore $b_0$ is not an element of $\Supp_{\mathcal{B}}(\cdot)$ of 
{\small
\begin{equation}\label{8797y7767878878787}
v - \biggl( (\lambda_{\mathcal{B},u_0,b_0})^{-1}\sum_{e\in\mathcal{E}_0}\lambda_{\mathcal{E},v,e}\ \lambda_{\mathcal{B},e,b_0}\  \biggr ) \ u_0 
= \biggl( \sum_{e\in \mathcal{E}\setminus \mathcal{E}_0} 
\lambda_{\mathcal{E},v,e}\ e \biggr) + \biggl(\sum_{e\in\mathcal{E}_0} \lambda_{\mathcal{E},v,e}\ (e -\lambda_{\mathcal{B},e,b_0} (\lambda_{\mathcal{B},u_0,b_0})^{-1} u_0) \biggr) \ .
\end{equation}
}
Thus, writing 
$v  = \bigl( v - \bigl( (\lambda_{\mathcal{B},u_0,b_0})^{-1}\sum_{e\in\mathcal{E}_0}\lambda_{\mathcal{E},v,e}\ \lambda_{\mathcal{B},e,b_0}\ \bigr )\ u_0 \bigr) 
+ \bigl( (\lambda_{\mathcal{B},u_0,b_0})^{-1}\sum_{e\in\mathcal{E}_0}\lambda_{\mathcal{E},v,e}\ \lambda_{\mathcal{B},e,b_0}\  \bigr) \ u_0$ shows that 
$v\in \langle \{ u\in U \colon b_0\notin \Supp_{\mathcal{B}}(u) \} \rangle_R + 
\langle u_0 \rangle_R$, completing the proof of 
$U \subseteq \langle \{ u\in U \colon 
b_0\notin \Supp_{\mathcal{B}}(u) \} \rangle_R \oplus \langle u_0 \rangle_R$. 
\end{proof}

The above proof of Lemma~\ref{lem:decomposition} does not work if 
the assumption of $M$ being finitely generated is dropped: while 
\cite[Theorem~6.1]{MR2330890} remains applicable, i.e. $U$ then still admits a 
basis, there is no general reason why $\mathcal{E}_0$ should then still 
be a finite set, hence the sums in \eqref{8797y7767878878787} may not 
be defined. This obstacle to adapting the monotonicity argument to an 
infinite setting may be a point of interest (possibly one to start from) 
in the unknown territory of linear-algebraic properties Hamilton of circuits 
in infinite graphs. There is also the issue of how to adapt the 
monotonicity argument in order to allow one to add infinitely-many edges.

\begin{proof}[Proof of Lemma~\ref{lem:generatingpropertiesofauxiliarestructures}]
As to \ref{lem:it:squareofhamiltoncircuitascayleygraph}, an easy verification 
shows that the map $\{ v_0, \dotsc, v_{n-1}\} \to \Z/n$, 
$v_i\mapsto \overline{i}$ is a graph isomorphism 
$\upC_n^2 \to\Cay(\Z/n;\{\overline{1},\overline{2},\overline{n-2},\overline{n-1}\})$. 
(Both for this verification and for the ones required in \ref{lem:it:PrIsoCayC2Cr}, 
\ref{lem:it:MrIsoCayC2r}, \ref{lem:it:relationofBCrtoPr} and 
\ref{lem:it:relationofBCrtoMr}, it is recommendable to use an obvious 
and known \cite[Section~1.5, first paragraph]{MR2089014} characterization of graph 
isomorphisms: \emph{every injective graph homomorphism 
between two graphs with equal $f$-vectors is a graph isomorphism.} This relieves 
one of the responsibility to explicitly show that non-edges are mapped to non-edges.)

As to \ref{lem:it:squareofhamiltoncircuit:notaprism}, the definition of $\square$ 
implies that for every graph $X$, every vertex of the graph $X\square \upP_1$ has 
odd degree. But for every $n\geq 5$ the graph $\upC_n^2$ is regular with 
vertex degree four.

As to \ref{876676545545655rr565667} and \ref{6t6t655r5r55r56656565rtrt},
first note that $\upC_n^2$ is non-bipartite, for both parities of $n$, and therefore 
\ref{lem:it:squareofhamiltoncircuitascayleygraph} and Theorem~\ref{thm:ChenQuimpo} 
combined imply that $\upC_n^2\in \mathcal{CO}_{\{f_0(\cdot)\}}$, for every $n$. 
It remains to justify that $\upC_n^2\in \mathrm{cd}_1\mathcal{C}_{\{f_0(\cdot)\}}$ for 
even $n$, resp. $\upC_n^2\in \mathrm{cd}_0\mathcal{C}_{\{f_0(\cdot)\}}$ for odd $n$. 
Both these statements follows from combining \ref{lem:it:squareofhamiltoncircuitascayleygraph} and \ref{lem:it:squareofhamiltoncircuit:notaprism} with 
Theorem~\ref{thm:AlspachLockeWitte}.\ref{thm:alspachlockewitte:numberofverticesodd} 
and Theorem~\ref{thm:AlspachLockeWitte}.\ref{thm:alspachlockewitte:whenthecodimensionisone}. 

As to \ref{898u889y8778y6t5r5454e446446}, first note that 
$\upC_n^2$ does indeed contain circuits of length $f_0(\upC_n^2)-1$ (in fact, 
$f_0(\upC_n^2)$ different ones), and then arbitrarily choose one such circuit $C$. 
Since $n$ is even, $C$ has odd length, and therefore 
$\upc_C\notin\langle\mathcal{H}(\upC_n^2)\rangle_{\Z/2}$. Moreover,  
$\dim_{\Z/2} \langle\mathcal{H}(\upC_n^2)\rangle_{\Z/2}
=\dim_{\Z/2}\upZ_1(\upC_n^2;\Z/2) - 1$ by \ref{876676545545655rr565667}, hence 
$\dim_{\Z/2} \langle\{\upc_C\}\sqcup\mathcal{H}(\upC_n^2)\rangle_{\Z/2} 
\geq \dim_{\Z/2}\upZ_1(\upC_n^2;\Z/2)$ and due to 
$\langle\{\upc_C\}\sqcup\mathcal{H}(\upC_n^2)\rangle_{\Z/2}$ being a $\Z/2$-linear 
subspace of $\upZ_1(\upC_n^2;\Z/2)$, this must hold with equality, 
proving \ref{898u889y8778y6t5r5454e446446}. 

As to \ref{lem:it:PrIsoCayC2Cr}, an easy verification shows that the map 
$\{ x_0,\dotsc,x_{r-1},$ $y_0,\dotsc,y_{r-1} \}$ $\to$ 
$\Z/2\oplus\Z/r$, $x_i\mapsto (\overline{0},\overline{i})$, 
$y_i\mapsto (\overline{1},\overline{i})$ is a graph isomorphism 
$\Pr_r\to\Cay(\Z/2\oplus\Z/r ; \{ (\overline{1},\overline{0}), 
(\overline{0},\overline{1}), (\overline{0},\overline{r-1})\})$. 

As to \ref{lem:it:MrIsoCayC2r}, an easy verification shows that the map 
$\upV(\upM_r)$ $=$ $\{ x_0,\dotsc,x_{r-1},$ $y_0,\dotsc,y_{r-1} \}$ $\to$ $\Z/(2r)$, 
$x_i\mapsto \overline{i}$, $y_i\mapsto \overline{i+r}$ is a graph isomorphism 
$\upM_r \to \Cay\left(\Z/(2r) ; \{ \overline{1},\overline{r},\overline{2r-1}\}\right)$.

As to \ref{lem:it:prism:hamiltonlaceable}, it is easy to check that $r$ being even 
implies that $\Pr_r$ is bipartite. Therefore \ref{lem:it:prism:hamiltonlaceable} 
follows from \ref{lem:it:PrIsoCayC2Cr} combined with Theorem~\ref{thm:ChenQuimpo}. 
Moreover, \ref{lem:it:prism:hamiltonlaceable} is straightforward 
to prove directly. 

As to \ref{lem:it:moebiusladder:hamiltonlaceable}, it is easy to check that $r$ being 
odd implies that $\upM_r$ is bipartite. Therefore \ref{lem:it:moebiusladder:hamiltonlaceable} follows from \ref{lem:it:MrIsoCayC2r} combined with 
Theorem~\ref{thm:ChenQuimpo}. Moreover, \ref{lem:it:moebiusladder:hamiltonlaceable} 
is straightforward to prove directly. 

As to \ref{lem:it:prism:generator}, it is easy to check that $r$ being even implies 
that $\Pr_r$ is bipartite. Therefore, combining \ref{lem:it:PrIsoCayC2Cr} with 
Theorem~\ref{thm:ChenQuimpo} yields that $\Pr_r\in\mathcal{LA}_{\{f_0(\cdot)-1\}}$, and 
combining \ref{lem:it:PrIsoCayC2Cr} with 
Theorem~\ref{thm:AlspachLockeWitte}.\ref{thm:alspachlockewitte:cayleygraphbipartite} 
yields $\Pr_r \in \mathrm{cd}_0\mathcal{C}_{\{f_0(\cdot)\}}$, completing the proof 
of \ref{lem:it:prism:generator}. 

As to \ref{lem:it:moebiusladder:generator}, it is easy to check that $r$ being 
odd implies that $\upM_r$ is bipartite. Therefore, combining \ref{lem:it:MrIsoCayC2r} 
with Theorem~\ref{thm:ChenQuimpo} yields that $\upM_r\in\mathcal{LA}_{\{f_0(\cdot)-1\}}$, 
and combining \ref{lem:it:MrIsoCayC2r} with 
Theorem~\ref{thm:AlspachLockeWitte}.\ref{thm:alspachlockewitte:cayleygraphbipartite} 
yields $\upM_r \in \mathrm{cd}_0\mathcal{C}_{\{f_0(\cdot)\}}$, completing the proof of 
\ref{lem:it:moebiusladder:generator}. 

As to \ref{lem:it:relationofBCrtoPr} and \ref{lem:it:relationofBCrtoMr}, an easy 
verification shows that the map $\upV(\CL_r)\to\upV(\Pr_r)=\upV(\upM_r)$ defined by  
$a_i\mapsto x_i$ for every even $0\leq i \leq r-1$, 
$a_i\mapsto y_i$ for every odd $0\leq i \leq r-1$, 
$b_i\mapsto y_i$ for every even $0\leq i \leq r-1$, 
$b_i\mapsto x_i$ for every odd $0\leq i \leq r-1$, is a graph 
isomorphism $\CL_r\to\Pr_r$ for every even $r\geq 4$ and a graph 
isomorphism  $\CL_r\to\upM_r$ for every odd $r\geq 4$. 

As to \ref{lem:it:bipartitecyclicladder:hamiltongenerated}, this follows by 
combining \ref{lem:it:prism:hamiltonlaceable} and 
\ref{lem:it:moebiusladder:hamiltonlaceable} with 
\ref{lem:it:relationofBCrtoPr} and \ref{lem:it:relationofBCrtoMr}.

As to \ref{lem:it:bipartitecyclicladder:generator}, this follows by combining 
\ref{lem:it:prism:generator} and \ref{lem:it:moebiusladder:generator} with 
\ref{lem:it:relationofBCrtoPr} and \ref{lem:it:relationofBCrtoMr}.

As to \ref{t6t67y8u899o8987g6g678jik90o} and \ref{89878675643545465r5556}, the 
literature apparently does not contain a sufficient criterion for 
Hamilton-connectedness which would apply to either 
$\Pr_r^{\boxtimes}$ or $\upM_r^{\boxtimes}$. Therefore a direct proof by distinguishing 
cases and providing explicit Hamilton paths appears to be 
unavoidable.\footnote{It might be possible to economize somewhat by putting more 
emphasis on the known Hamilton-laceability of cartesian products of the form 
$\Pr_1\square \Pr_\ell$ (which opens up the possibilty to argue by dividing the 
graph into appropriate pieces and subsequently glue Hamilton paths together). 
But even then one has to pay attention to parities, making the gain in brevity 
over explicitly exhibiting Hamilton paths seem small. To give a short example of 
this, Case~2.1.1 (where there is not much gluing to do) has been treated in that 
manner.} Let $\{v,w\}\subseteq\upV(\upM_r^{\boxtimes}) = \upV(\Pr_r^{\boxtimes})$ be 
arbitrary distinct vertices. 

We will repeatedly reduce the work to be done by making use of symmetries. The 
automorphism group of both $\Pr_r^{\boxtimes}$ and $\upM_r^{\boxtimes}$ is the group 
generated by the two unique homomorphic extensions of the maps 
$\binom{\{z,x_0,y_0,x_1,y_1\}\to\{z,x_0,y_0,x_1,y_1\}}
{z\mapsto z,\ x_0\leftrightarrow y_0,\ x_1\leftrightarrow y_1}$ and 
$\binom{\{z,x_0,y_0,x_1,y_1\}\to\{z,x_0,y_0,x_1,y_1\}}
{z\mapsto z,\ x_0\leftrightarrow x_1,\ y_0\leftrightarrow y_1}$ 
to all of $\upV(\Pr_r^{\boxtimes})$ $=$ $\upV(\upM_r^{\boxtimes})$ (thus both 
$\Aut(\Pr_r^{\boxtimes})$ and $\Aut(\upM_r^{\boxtimes})$ are isomorphic to the 
Klein four-group $\Z/2\oplus \Z/2$). These extensions are involutions on 
$\upV(\Pr_r^{\boxtimes})$ $=$ $\upV(\upM_r^{\boxtimes})$ and will be denoted by 
$\Psi_{xy}$ (the map $z\mapsto z$ and $x_i \leftrightarrow y_i$ for every 
$0\leq i \leq r-1$) and $\Psi_{xx}$ (the map $z\mapsto z$ and, for 
$u\in \{x,y\}$, by $u_1\leftrightarrow u_0$, $u_2\leftrightarrow u_{r-1}$, 
$u_3\leftrightarrow u_{r-2}$, $\dotsc$, 
$u_{\lfloor\frac{r+1}{2}\rfloor} \leftrightarrow u_{\lceil\frac{r+1}{2}\rceil}$). 
Both $\Psi_{xy}$ and $\Psi_{xx}$ are automorphisms of both $\upM_r^{\boxtimes}$ 
(for every $r\geq 5$) and $\Pr_r^{\boxtimes}$ (for every $r\geq 4$).

\textsl{Case~1.} $z\in \{v,w\}$. In the absence of information distinguishing 
$v$ from $w$ we may assume $z=v$. 

\textsl{Case~1.1.} $w\in \{x_0,y_0,x_1,y_1\}$. Since $\Aut(\Pr_r^{\boxtimes})$ acts 
transitively on the set $\{x_0,y_0,x_1,y_1\}$  while keeping $z$ fixed, we may 
assume that $w=x_0$. Then $x_0x_1\dotsc x_{r-1}y_{r-1}y_{r-2}\dotsc y_1y_0z$ 
in both $\Pr_r^{\boxtimes}$ and $\upM_r^{\boxtimes}$ is Hamilton path linking 
$v$ and $w$. 
This proves both \ref{t6t67y8u899o8987g6g678jik90o} and \ref{89878675643545465r5556} in the Case~1.1.

\textsl{Case~1.2.} $w\notin \{x_0,y_0,x_1,y_1\}$. Due to $\Psi_{xy}$ we may assume 
that $w=x_i$ with $2\leq i \leq r-1$. Now consider the expressions: 
{\scriptsize
\begin{enumerate}[label={\rm(Pr.1.2.(\arabic{*}))},leftmargin=9em,start=0]
\item\label{31241874612348761248} $x_iy_iy_{i+1}x_{i+1}x_{i+2}y_{i+2}\dotsc y_{r-2}y_{r-1}x_{r-1}x_0x_1x_2\dotsc x_{i-1}y_{i-1}y_{i-2}y_{i-3}\dotsc y_0z$ \quad ,
\item\label{56645546565545454745} $x_iy_iy_{i+1}x_{i+1}x_{i+2}y_{i+2}\dotsc x_{r-2}x_{r-1}y_{r-1}y_0y_1y_2\dotsc y_{i-1}x_{i-1}x_{i-2}x_{i-3}\dotsc x_0z$ \quad ,
\end{enumerate}
}
{\scriptsize
\begin{enumerate}[label={\rm(M.1.2.(\arabic{*}))},leftmargin=9em,start=0]
\item\label{67123867913467346712} $x_iy_iy_{i+1}x_{i+1}x_{i+2}y_{i+2}\dotsc x_{r-2}x_{r-1}y_{r-1}x_0x_1x_2\dotsc x_{i-1}y_{i-1}y_{i-2}y_{i-3}\dotsc y_0z$ \quad ,
\item\label{87553658454352435454} $x_iy_iy_{i+1}x_{i+1}x_{i+2}y_{i+2}\dotsc y_{r-2}y_{r-1}x_{r-1}y_0y_1y_2 \dotsc y_{i-1}x_{i-1}x_{i-2}x_{i-3}\dotsc x_0z$ \quad .
\end{enumerate}
}
If $i$ is even, then \ref{31241874612348761248}, and if $i$ is odd 
then \ref{56645546565545454745} is a Hamilton path of $\Pr_r$ 
linking $v$ and $w$, for every even $r\geq 4$. 
If $i$ is even, then \ref{67123867913467346712}, and if $i$ is odd 
then \ref{87553658454352435454} is a Hamilton path of $\upM_r$ 
linking $v$ and $w$, for every odd $r\geq 5$. 
This proves both \ref{t6t67y8u899o8987g6g678jik90o} and \ref{89878675643545465r5556} 
in the Case~1.2.

\textsl{Case~2.} $z\notin\{v,w\}$. 

\textsl{Case~2.1.} $\{v,w\} \subseteq \{x_0,\dotsc,x_{r-1}\}$ or 
$\{v,w\} \subseteq \{y_0,\dotsc,y_{r-1}\}$. In view of $\Phi_{xy}$  we may 
assume that $\{v,w\} \subseteq \{x_0,\dotsc,x_{r-1}\}$. 

\textsl{Case~2.1.1.} $\{v,w\}\cap\{x_0,x_1\}\neq\emptyset$. In the absence of 
information distinguishing $v$ from $w$ we may assume that $v\in \{x_0,x_1\}$. In 
view of the transitivity of both $\Aut(\Pr_r^{\boxtimes})$ and $\Aut(\upM_r^{\boxtimes})$ 
on $\{x_0,x_1,y_0,y_1\}$ we may further assume that $v=x_0$. Then $w=x_i$ for some 
$i\in [1,r-1]$. We can now reduce the claim we are currently proving to claims about 
a cartesian product of the form $\upP_1\square \upP_l$ (for some $l$) which is 
obtained after deleting certain vertices. The reduction is made possible by 
making---depending on the parity of the $i$ in $x_i$---the right choice of a 
$3$-path or a $4$-path within the graph induced by $\{z,x_0,x_1,y_0,y_1\}$. 

If $i$ is even (hence in particular $i\geq 2$), then starting out with the 
$4$-path $x_0 y_0 z x_1 y_1$ leaves us facing the task of connecting $y_2$ with 
$x_i$ (which lies in the opposite colour class compared to $y_2$) via a Hamilton 
path of the graph remaining after deletion of $\{x_0,y_0,x_1,y_1,z\}$. This 
remaining graph is---regardless of whether we are currently speaking about 
$\upM_r^{\boxtimes}$ or $\Pr_r^{\boxtimes}$---isomorphic to the cartesian product 
$\upP_2\square \upP_{r-3}$, of which the vertex $y_2$ is a `corner vertex' in 
the sense of \cite[Section~2]{MR641233}. Therefore this task \emph{can} be 
accomplished according to \cite[Lemma~1]{MR641233}.

If on the contrary $i$ is odd, then starting out with the $3$-path $x_0 z y_0 y_1$ 
leaves us facing the task of connecting $y_1$ with $x_i$ (which lies in the opposite 
colour class compared to $y_1$) by a Hamilton path of the graph remaining after 
deletion of $\{x_0,y_0,z\}$. This remaining graph is---regardless of whether we are 
currently speaking about $\upM_r^{\boxtimes}$ or $\Pr_r^{\boxtimes}$---isomorphic to the 
cartesian product $\upP_2\square \upP_{r-2}$, of which the vertex `$y_1$' is a corner 
vertex. Therefore this task, too, \emph{can} be accomplished according to 
\cite[Lemma~1]{MR641233}. 
This proves both \ref{t6t67y8u899o8987g6g678jik90o} and \ref{89878675643545465r5556} 
in the Case~2.1.1.

\textsl{Case~2.1.2.} $\{v,w\}\cap\{x_0,x_1\} = \emptyset$. Then $v=x_i$ and $w=x_j$ 
for some $\{i,j\}\in\binom{\{2,3,\dotsc,r-1\}}{2}$. In the absence of information 
distinguishing $v$ from $w$ we may assume that $2\leq i < j \leq r-1$. 

Now consider the expressions 
{\scriptsize
\begin{enumerate}[label={\rm(Pr.2.1.2.(\arabic{*}))},leftmargin=7em]
\item\label{547655676532984023032} 
$x_i x_{i+1}\dotsc x_{j-1} y_{j-1} y_{j-2}\dotsc y_{i-1} x_{i-1} x_{i-2} y_{i-2} y_{i-3} \dotsc x_2 y_2 y_1 x_1 z y_0 x_0 x_{r-1} y_{r-1} y_{r-2} \dotsc x_{j+1} y_{j+1} y_j x_j$ \quad ,
\item\label{787024879423237032872} 
$x_i x_{i+1}\dotsc x_{j-1} y_{j-1} y_{j-2}\dotsc y_{i-1} x_{i-1} x_{i-2} y_{i-2} y_{i-3} \dotsc x_2 y_2 y_1 x_1 z x_0 y_0 y_{r-1} x_{r-1} x_{r-2} \dotsc x_{j+1} y_{j+1} y_j x_j$ \quad ,
\item\label{987352879235897238792} 
$x_i x_{i+1}\dotsc x_{j-1} y_{j-1} y_{j-2}\dotsc y_{i-1} x_{i-1} x_{i-2} y_{i-2} y_{i-3} \dotsc y_2 x_2 x_1 y_1 z y_0 x_0 x_{r-1} y_{r-1} y_{r-2} \dotsc x_{j+1} y_{j+1} y_j x_j$ \quad ,
\item\label{782458725628791111111} 
$x_i x_{i+1}\dotsc x_{j-1} y_{j-1} y_{j-2}\dotsc y_{i-1} x_{i-1} x_{i-2} y_{i-2} y_{i-3} \dotsc y_2 x_2 x_1 y_1 z x_0 y_0 y_{r-1} x_{r-1} x_{r-2} \dotsc x_{j+1} y_{j+1} y_j x_j$ \quad .
\end{enumerate}
} 
and
{\scriptsize
\begin{enumerate}[label={\rm(M.2.1.2.(\arabic{*}))},leftmargin=7em]
\item\label{232356467656724625654} 
$x_i x_{i+1}\dotsc x_{j-1} y_{j-1} y_{j-2}\dotsc y_{i-1} x_{i-1} x_{i-2} y_{i-2} y_{i-3} \dotsc x_2 y_2 y_1 x_1 z y_0 x_0 y_{r-1} x_{r-1} x_{r-2} \dotsc x_{j+1} y_{j+1} y_j x_j$ \quad ,
\item\label{653125543246555443634} 
$x_i x_{i+1}\dotsc x_{j-1} y_{j-1} y_{j-2}\dotsc y_{i-1} x_{i-1} x_{i-2} y_{i-2} y_{i-3} \dotsc x_2 y_2 y_1 x_1 z x_0 y_0 x_{r-1} y_{r-1} y_{r-2} \dotsc x_{j+1} y_{j+1} y_j x_j$ \quad ,
\item\label{432347675412323434344} 
$x_i x_{i+1}\dotsc x_{j-1} y_{j-1} y_{j-2}\dotsc y_{i-1} x_{i-1} x_{i-2} y_{i-2} y_{i-3} \dotsc y_2 x_2 x_1 y_1 z y_0 x_0 y_{r-1} x_{r-1} x_{r-2} \dotsc x_{j+1} y_{j+1} y_j x_j$ \quad ,
\item\label{866432324546665234323} 
$x_i x_{i+1}\dotsc x_{j-1} y_{j-1} y_{j-2}\dotsc y_{i-1} x_{i-1} x_{i-2} y_{i-2} y_{i-3} \dotsc y_2 x_2 x_1 y_1 z x_0 y_0 x_{r-1} y_{r-1} y_{r-2} \dotsc x_{j+1} y_{j+1} y_j x_j$ \quad .
\end{enumerate}
}

If $i$ is even and $j$ is even, 
then \ref{547655676532984023032} for even $r$ is a Hamilton path of 
$\Pr_r^{\boxtimes}$ linking $v$ and $w$ 
and \ref{232356467656724625654} for odd $r$ is one of $\upM_r^{\boxtimes}$, 
while if $i$ is even and $j$ is odd, 
then \ref{787024879423237032872} for even $r$ is a Hamilton path of 
$\Pr_r^{\boxtimes}$ linking $v$ and $w$ 
and \ref{653125543246555443634} for odd $r$ is one of $\upM_r^{\boxtimes}$, 
while if $i$ is odd and $j$ is even, 
then \ref{987352879235897238792} for even $r$ is a Hamilton path of 
$\Pr_r^{\boxtimes}$ linking $v$ and $w$ 
and \ref{432347675412323434344} for odd $r$ is one of $\upM_r^{\boxtimes}$, 
while if $i$ is odd and $j$ is odd, 
then \ref{782458725628791111111} for even $r$ is a Hamilton path of 
$\Pr_r^{\boxtimes}$ linking $v$ and $w$ 
and \ref{866432324546665234323} for odd $r$ is one of $\upM_r^{\boxtimes}$.
This proves both \ref{t6t67y8u899o8987g6g678jik90o} and \ref{89878675643545465r5556} 
in the Case~2.1.2. 

\textsl{Case~2.2.} $\{v,w\} \cap \{x_0,\dotsc,x_{r-1}\} \neq \emptyset$ and $\{v,w\} \cap \{y_0,\dotsc,y_{r-1}\} \neq \emptyset$. Since we are within Case~2 we know that 
$\{v,w\}\subseteq \{x_0,\dotsc,x_{r-1}\}\sqcup\{y_0,\dotsc,y_{r-1}\}$. Therefore the 
statement defining Case~2.2 is the negation of the one defining Case~2.1.
Due to $\Phi_{xy}$ we may assume 
$v = x_i$ with $0\leq i \leq r-1$ and $w = y_j$ with $0\leq j \leq r-1$. 
Due to $\Phi_{xx}$ we may further assume that $i\leq j$. 

\textsl{Case~2.2.1.} $i\in \{0,1\}$. Not only do both $\Aut(\Pr_r^{\boxtimes})$ and 
$\Aut(\upM_r^{\boxtimes})$ act transitively on $\{x_0,x_1,y_0,y_1\}$, but it is possible 
to use this symmetry while still preserving the assumption $i\leq j$ that we already 
made: namely, if $i=1$, hence $v=x_1$ and $w=y_j$ with 
$1=i\leq j$, then $\Psi_{xx}(v)=x_0$ and $\Psi_{xx}(w) = y_{r+1-i}$ (with $y_r:=y_0$) 
and still $0=i\leq j=r+1-i$. Therefore we may further assume that $i=0$, 
i.e. $v=x_0$.
Now consider the expressions 
{\scriptsize
\begin{enumerate}[label={\rm(Pr.2.2.1.(\arabic{*}))},leftmargin=10em,start=0]
\item\label{67814286726326713263261232} 
$x_0 z x_1 x_2 \dotsc x_{j+1} y_{j+1} y_{j+2} x_{j+2} \dotsc  x_{r-2} x_{r-1} y_{r-1} y_0 \dotsc y_{j-1} y_j$ \quad ,
\item\label{98734567837822362368723628} 
$x_0 x_{r-1} x_{r-2}\dotsc x_{j+1} y_{j+1} y_{j+2} \dotsc y_{r-1} y_0 z x_1 y_1 y_2 x_2 \dotsc x_{j-1} x_j y_j$ \quad .
\end{enumerate}
}
{\scriptsize
\begin{enumerate}[label={\rm(M.2.2.1.(\arabic{*}))},leftmargin=10em,start=0]
\item\label{878676755945454656587879897} 
$x_0 z x_1 x_2 \dotsc x_{j+1} y_{j+1} y_{j+2} x_{j+2} \dotsc y_{r-2} y_{r-1} x_{r-1} y_0 y_1 \dotsc y_j$ \quad ,
\item\label{8797567544354454r54d5665} 
$x_0 y_{r-1} x_{r-1} x_{r-2} y_{r-2} \dotsc x_j x_{j+1} \dotsc x_1 z y_0 y_1 \dotsc y_j$ \quad .
\end{enumerate}
}

If $j$ is even, then \ref{67814286726326713263261232}, and if $j$ is odd 
then \ref{98734567837822362368723628} is a Hamilton path of $\Pr_r^{\boxtimes}$ 
linking $v$ and $w$, for every even $r\geq 4$. 
If $j$ is even, then \ref{878676755945454656587879897}, and if $j$ is odd 
then \ref{8797567544354454r54d5665} is a Hamilton path of $\upM_r^{\boxtimes}$ 
linking $v$ and $w$, for every odd $r\geq 4$. This proves 
\ref{t6t67y8u899o8987g6g678jik90o} in the Case~2.2.1. 

\textsl{Case~2.2.2.} $i\notin \{0,1\}$. Now consider the expressions 
{\scriptsize
\begin{enumerate}[label={\rm(Pr.2.2.2.(\arabic{*}))},leftmargin=8em,start=0]
\item\label{326752367523675326753} 
$x_i x_{i+1} \dotsc x_{j+1} y_{j+1} y_{j+2} x_{j+2} \dotsc x_{r-2} x_{r-1} y_{r-1} y_0 x_0 z x_1 y_1 y_2 x_2 x_3 y_3 \dotsc x_{i-2} x_{i-1} y_{i-1} y_i y_{i+1}\dotsc y_j$ \quad ,
\item\label{438977129421747384732} 
$x_i x_{i+1} \dotsc x_{j+1} y_{j+1} y_{j+2} x_{j+2} \dotsc y_{r-2} y_{r-1} x_{r-1} x_0 y_0 z x_1 y_1 y_2 x_2 x_3 y_3 \dotsc x_{i-2} x_{i-1} y_{i-1} y_i y_{i+1}\dotsc y_j$ \quad .
\end{enumerate}
}
{\scriptsize
\begin{enumerate}[label={\rm(M.2.2.2.(\arabic{*}))},leftmargin=8em,start=0]
\item\label{323256345657232433452} 
$x_i x_{i+1} \dotsc x_{j+1} y_{j+1} y_{j+2} x_{j+2} \dotsc y_{r-2} y_{r-1} x_{r-1} y_0 x_0 z x_1 y_1 y_2 x_2 x_3 y_3 \dotsc x_{i-2} x_{i-1} y_{i-1} y_i y_{i+1}\dotsc y_j$ \quad , 
\item\label{867912678032867032867} 
$x_i x_{i+1} \dotsc x_{j+1} y_{j+1} y_{j+2} x_{j+2} \dotsc x_{r-2} x_{r-1} y_{r-1} x_0 y_0 z x_1 y_1 y_2 x_2 x_3 y_3 \dotsc x_{i-2} x_{i-1} y_{i-1} y_i y_{i+1}\dotsc y_j$ \quad .
\end{enumerate}
} 
Since the automorphism $\Psi_{xx}$ changes the parity of the index of an $x_i$, and 
since (as explained in Case~2.2.1) the relation $i\leq j$ is preserved when applying 
$\Psi_{xx}$, we may assume that $i$ is even. 

If $j$ is even, then \ref{326752367523675326753}, and if $j$ is odd then 
\ref{438977129421747384732} is a Hamilton path of $\Pr_r^{\boxtimes}$ 
linking $v$ and $w$, for every even $r\geq 4$. 
If $j$ is even, then \ref{323256345657232433452}, and if $j$ is odd then 
\ref{867912678032867032867} is a Hamilton path of $\upM_r^{\boxtimes}$ 
linking $v$ and $w$, for every odd $r\geq 5$, completing the Case~2.2.2.  

Since at each level of the case distinction the property defining the preceding 
level was partitioned into mutually exclusive properties, both 
\ref{t6t67y8u899o8987g6g678jik90o} and \ref{89878675643545465r5556} have now 
been proved.

As to \ref{32671163432867327682} and \ref{0o978986765565454454e}, 
let $\{v,w\}\subseteq\upV(\Pr_r^{\boxminus})$ be arbitrary distinct vertices. 
For a large part (i.e. for a large majority of instances of the property of being 
Hamilton connected) it is possible to deduce the Hamilton-connectedness 
of $\Pr_r^{\boxminus}$ and $\upM_r^{\boxminus}$ from (the proof of) 
\ref{t6t67y8u899o8987g6g678jik90o} in Lemma~\ref{lem:generatingpropertiesofauxiliarestructures}: if $\{v,w\}\cap\{z',z''\} = \emptyset$, 
then we have $\{v,w\}\subseteq \upV(\Pr_r)\setminus\{z\}$ and therefore each 
Hamilton path $P$ in $\Pr_r$ or $\upM_r$ linking $v$ and $w$ contains $z$ as a 
vertex of degree two. This implies that $P$ can be extended to a Hamilton path 
in $\Pr_r^{\boxminus}$ linking $v$ and $w$. 

If on the contrary $\{v,w\}\cap\{z',z''\}\neq\emptyset$, then there are subcases: 
if $\{v,w\}=\{z',z''\}$, then $z' x_0y_0y_1\dotsc y_{r-1}x_{r-1}x_{r-2}\dotsc x_1 z''$ 
is---in $\Pr_r$ and in $\upM_r$ as well---a Hamilton path linking $v$ and $w$. 

We are left with the case $\lvert\{v,w\}\cap\{z',z''\}\rvert = 1$. In the absence of 
information distinguishing $v$ from $w$ we may assume that 
$v\in \{z',z''\}$ and $w\notin\{z',z''\}$. One may treat this case, too, 
by re-using Hamilton paths in $\Pr_r$ or $\upM_r$, but now it can make a 
difference (for the extendability) \emph{how} such Hamilton path looks like 
around the `special' subgraph induced on the vertices $\{z,x_0,y_0,x_1,y_1\}$ 
and it therefore seems quicker to treat this case directly. Since the property 
`$v\in \{z',z''\}$ and $w\notin\{z',z''\}$', at face value, still comprises several  
cases, we should reduce their number via automorphisms. However---essentially due 
to $x_0 z''$ and the unique degree-$5$-vertex $x_0$ caused by it---both 
$\Aut(\Pr_r^{\boxminus})$ and $\Aut(\upM_r^{\boxminus})$ are trivial. But since 
Hamilton-connectedness is a monotone graph property, it suffices to prove that 
$\Pr_r^{\boxminus,-} := \Pr_r^{\boxminus} - x_0 z'' $ and 
$\upM_r^{\boxminus,-} := \upM_r^{\boxminus} -  x_0 z''$ are 
Hamilton-connected, and these graphs \emph{do} have symmetries again, essentially 
the same as $\Pr_r^{\boxtimes}$ and $\upM_r^{\boxtimes}$. 

The automorphism group of both $\Pr_r^{\boxminus,-}$ and $\upM_r^{\boxminus,-}$ is the 
group generated by the two unique homomorphic extensions of 
$\binom{\{z',z'',x_0,y_0,x_1,y_1\} \to \{z',z'',x_0,y_0,x_1,y_1\}}{z'\mapsto z',\ z''\mapsto z'',\ x_0\leftrightarrow y_0,\ x_1\leftrightarrow y_1}$ and $\binom{\{z',z'',x_0,y_0,x_1,y_1\}\to\{z',z'',x_0,y_0,x_1,y_1\}}{z'\leftrightarrow z'',\ x_0\leftrightarrow x_1,\ y_0\leftrightarrow y_1}$ to all of 
$\upV(\Pr_r^{\boxminus,-})$ $=$ $\upV(\upM_r^{\boxminus,-})$ (thus both 
$\Aut(\Pr_r^{\boxminus,-})$ and $\Aut(\upM_r^{\boxminus,-})$ are isomorphic to the 
Klein four-group $\Z/2\oplus \Z/2$). These extensions are involutions on 
$\upV(\Pr_r^{\boxminus,-})$ $=$ $\upV(\upM_r^{\boxminus,-})$ and will be denoted by 
$\Xi_{xy}$ (the map with $z'\mapsto z'$, $z''\mapsto z''$ and 
$x_i \leftrightarrow y_i$ for every $0\leq i \leq r-1$) and $\Xi_{xx}$ 
(the map with $z'\leftrightarrow z''$ and, for $u\in \{x,y\}$, 
$u_1\leftrightarrow u_0$, $u_2\leftrightarrow u_{r-1}$, 
$u_3\leftrightarrow u_{r-2}$, $\dotsc$, 
$u_{\lfloor\frac{r+1}{2}\rfloor} \leftrightarrow u_{\lceil\frac{r+1}{2}\rceil}$. 
Both $\Xi_{xy}$ and $\Xi_{xx}$ are automorphisms of both $\upM_r^{\boxminus,-}$ 
(for every $r\geq 5$) and $\Pr_r^{\boxminus,-}$ (for every $r\geq 4$). 

Since $\Xi_{xx}$ interchanges $z'$ and $z''$, we may assume that $v=z'$. Then 
there are two cases left: $w\in \{x_0,y_0,x_1,y_1\}$ and its negation  
$w\in \{x_2,y_2,x_3,y_3\dotsc,x_{r-1},y_{r-1}\}$ (keep in mind that we already 
assumed $w\notin\{z',z''\}$ and therefore this indeed is the negation). 

\textsl{Case~1.} $w\in \{x_0,y_0,x_1,y_1\}$. Then since $\Xi_{xy}$ maps 
$x_0\leftrightarrow y_0$ and $x_1\leftrightarrow y_1$ while keeping $z'$ fixed, 
we may assume that $w\in \{ x_0,x_1\}$ and are left with two cases. 

\textsl{Case~1.1.} If $w=x_0$, then 
$z' y_0 y_1 z'' x_1 x_2 y_2 y_3 x_3 \dotsc y_{r-2} y_{r-1} x_{r-1} x_0$ 
is a Hamilton path linking $v$ and $w$ in $\Pr_r^{\boxminus,-}$ 
for every even $r\geq 4$, and 
$z' y_0 y_1 z'' x_1 x_2 y_2 y_3 x_3 \dotsc x_{r-2} x_{r-1} y_{r-1} x_0$ 
is one in $\upM_r^{\boxminus,-}$ for every odd $r\geq 5$. 

\textsl{Case~1.2.} If $w=x_1$, then 
$z' x_0 y_0 y_{r-1} x_{r-1} x_{r-2} y_{r-2} y_{r-3} x_{r-3} \dotsc y_2 y_1 z'' x_1$ 
is a Hamilton path linking $v$ and $w$ in $\Pr_r^{\boxminus,-}$ 
for every even $r\geq 4$, and 
$z' x_0 y_0 x_{r-1} y_{r-1} y_{r-2} x_{r-2} x_{r-3} y_{r-3} \dotsc y_2 y_1 z'' x_1$ 
is one in $\upM_r^{\boxminus,-}$ 
for every odd $r\geq 5$. 

\textsl{Case~2.} $w\in \{x_2,y_2,x_3,y_3\dotsc,x_{r-1},y_{r-1}\}$.
 Then since $\Xi_{xy}$ interchanges the sets $\{x_0,\dotsc,x_{r-1}\}$ 
and $\{y_0,\dotsc,y_{r-1}\}$ while fixing $z'$, we may assume that 
$w = x_i$ with $2\leq i \leq r-1$. If $i\geq 3$, then 
$z' x_0 y_0 y_1 z'' x_1 x_2 y_2 y_3 \dotsc y_{r-1} x_{r-1} x_{r-2} \dotsc x_i$ 
is---regardless of whether $i$ is odd or even---a Hamilton path linking $v$ and $w$ 
in both $\Pr_r^{\boxminus,-}$ and $\upM_r^{\boxminus,-}$. In the case that $i=2$, the path 
$z' y_0 x_0 x_{r-1} y_{r-1} y_{r-2} x_{r-2} x_{r-3} \dotsc x_3 y_3 y_2 y_1 z'' x_1 x_2$
is a Hamilton path linking $v$ and $w$ in $\Pr_r^{\boxminus,-}$, and 
$z' y_0 x_0 y_{r-1} x_{r-1} x_{r-2} y_{r-2} y_{r-3} \dotsc x_3 y_3 y_2 y_1 z'' x_1 x_2$
is one in $\upM_r^{\boxminus,-}$, completing Case~2, and also the proof of 
both \ref{32671163432867327682} and \ref{0o978986765565454454e}. 

As to \ref{38794320986754435465687} in the case $X=\Pr_r^{\boxtimes}$, for every 
even $r\geq 4$, the ($5\times 5$)-minor indexed by 
$x_0y_0$, $x_1y_1$, $zx_1$, $zy_1$, $y_0y_{r-1}$ 
of the $\bigl(f_1(\Pr_r^{\boxtimes})\times 5\bigr)$-matrix which represents the 
elements of $\{\upc_{C}\colon C\in \mathcal{CB}_{\Pr_r^{\boxtimes}}^{(1)}\}$ as elements of 
$\upC_1(\Pr_r^{\boxtimes};\Z/2)\supseteq \upZ_1(\Pr_r^{\boxtimes};\Z/2)$ w.r.t. the 
standard basis of $\upC_1(\Pr_r^{\boxtimes};\Z/2)$, is the one shown 
in \eqref{875633254768786664321}. 
\begin{equation}\label{875633254768786664321}
\begin{smallmatrix}
              & C_{\ev,r,1} & C_{\ev,r,2} & C_{\ev,r,3} & C_{\ev,r,4} & C_{\ev,r,5} \\ 
x_0\wedge    y_0 & 1 & 1 & 1 & 0 & 1 \\
x_1\wedge    y_1 & 1 & 0 & 1 & 1 & 0 \\
  z\wedge    x_1 & 0 & 1 & 1 & 0 & 1 \\
  z\wedge    y_1 & 1 & 1 & 0 & 0 & 1 \\
y_0\wedge y_{r-1} & 0 & 0 & 1 & 1 & 1 
\end{smallmatrix}
\end{equation}
The matrix in \eqref{875633254768786664321} is a nonsingular element of 
$(\Z/2)^{[5]^2}$, its inverse being 
{\tiny$\left(\begin{smallmatrix}
 1 & 0 & 1 & 0 & 0 \\
 0 & 1 & 0 & 1 & 1 \\
 1 & 0 & 0 & 1 & 0 \\
 0 & 1 & 1 & 1 & 0 \\
 1 & 1 & 1 & 0 & 1
\end{smallmatrix}\right)$} $\in(\Z/2)^{[5]^2}$. The existence of one such minor 
by itself proves \ref{38794320986754435465687} in the case $X=\Pr_r^{\boxtimes}$.
As to \ref{38794320986754435465687} in the case $X=\upM_r^{\boxtimes}$, for every 
odd $r\geq 5$, 
the ($5\times 5$)-minor indexed by $x_0y_0$, $x_1y_1$, $zx_1$, $zy_1$, $x_0y_{r-1}$ 
of the $\bigl(f_1(\upM_r^{\boxtimes})\times 5\bigr)$-matrix which represents the 
elements of $\{\upc_{C}\colon C\in \mathcal{CB}_{\upM_r^{\boxtimes}}^{(1)}\}$ as elements of 
$\upC_1(\upM_r^{\boxtimes};\Z/2)\supseteq \upZ_1(\upM_r^{\boxtimes};\Z/2)$ w.r.t. the 
standard basis of $\upC_1(\upM_r^{\boxtimes};\Z/2)$, is the one shown 
in \eqref{8767654323444334434343}. 
\begin{equation}\label{8767654323444334434343}
\begin{smallmatrix}
              & C_{\od,r,1} & C_{\od,r,2} & C_{\od,r,3} & C_{\od,r,4} & C_{\od,r,5} \\ 
x_0\wedge    y_0 & 1 & 1 & 1 & 0 & 1 \\
x_1\wedge    y_1 & 1 & 0 & 1 & 1 & 0 \\
  z\wedge    x_1 & 0 & 1 & 1 & 0 & 1 \\
  z\wedge    y_1 & 1 & 1 & 0 & 0 & 1 \\
x_0\wedge y_{r-1} & 1 & 1 & 0 & 0 & 0 
\end{smallmatrix}
\end{equation}
The matrix in \eqref{8767654323444334434343} is a nonsingular element 
of $(\Z/2)^{[5]^2}$, 
its inverse being 
{\tiny $\left(\begin{smallmatrix}
 1 & 0 & 1 & 0 & 0 \\
 1 & 0 & 1 & 0 & 1 \\
 1 & 0 & 0 & 1 & 0 \\
 0 & 1 & 1 & 1 & 0 \\
 0 & 0 & 0 & 1 & 1
\end{smallmatrix}\right)$} $\in(\Z/2)^{[5]^2}$. The existence of one such minor 
by itself proves \ref{38794320986754435465687} in the case $X=\upM_r^{\boxtimes}$.
As to \ref{38794320986754435465687} in the case $X=\Pr_r^{\boxminus}$, for every 
even $r\geq 4$ the $(5\times 5)$-minor indexed 
by $x_0y_0$, $x_1y_1$, $z'x_0$, $z''y_1$ and $x_0x_{r-1}$ of the 
$\bigl(f_1(\Pr_r^{\boxminus})\times 5\bigr)$-matrix which represents the 
elements of $\{\upc_C\colon C\in \mathcal{CB}_{\Pr_r^{\boxminus}}^{(1)}\}$ 
as elements of $\upC_1(\Pr_r^{\boxminus};\Z/2)\subseteq\upZ_1(\Pr_r^{\boxminus};\Z/2)$ 
w.r.t. the standard basis of  $\upC_1(\Pr_r^{\boxminus};\Z/2)$, is the one shown 
in \eqref{89887659878768767}.
\begin{equation}\label{89887659878768767}
\begin{smallmatrix} & C_{\boxminus,\ev,r,1} & C_{\boxminus,\ev,r,2} & C_{\boxminus,\ev,r,3} & 
C_{\boxminus,\ev,r,4} & C_{\boxminus,\ev,r,5} \\ 
x_0\wedge y_0     & 0 & 0 & 0 & 1 & 0 \\
x_1\wedge y_1     & 0 & 1 & 1 & 0 & 0 \\
z'\wedge x_0      & 1 & 0 & 1 & 1 & 1 \\
z''\wedge y_1     & 0 & 0 & 0 & 0 & 1 \\ 
x_0\wedge x_{r-1}  & 0 & 1 & 0 & 0 & 1 
\end{smallmatrix}
\end{equation}
The matrix in \eqref{89887659878768767} is a nonsingular element 
of $(\Z/2)^{[5]^2}$ with inverse {\tiny $\left(\begin{smallmatrix}
1 & 1 & 1 & 0 & 1 \\
0 & 0 & 0 & 1 & 1 \\
0 & 1 & 0 & 1 & 1 \\
1 & 0 & 0 & 0 & 0 \\
0 & 0 & 0 & 1 & 0 
\end{smallmatrix}\right)$}. The existence of one such minor by itself 
proves \ref{38794320986754435465687} in the case $X=\Pr_r^{\boxminus}$. 
As to \ref{38794320986754435465687} in the case $X=\upM_r^{\boxminus}$, due to the 
similar definitions in 
\ref{76645565667786786676655009877} and \ref{9198786653454809958487332626263}, 
it suffices to note that if in the preceding paragraph 
`$\Pr_r^{\boxminus}$' is replaced by `$\upM_r^{\boxminus}$', 
`even $r\geq 4$' by `odd $r\geq 5$' and `$x_0\wedge x_{r-1}$' by `$x_0\wedge y_{r-1}$', 
then the matrix obtained is exactly the one in \eqref{89887659878768767}. This 
completes the proof of \ref{38794320986754435465687} in its entirety.

As to \ref{98786543322154675878987} in the case $X=\Pr_r^{\boxtimes}$, for every 
even $r\geq 4$, the $\bigl((r-1)\times(r-1)\bigr)$-minor indexed by 
$x_1y_1$, $x_2y_2$, $\dotsc$, $x_{r-1}y_{r-1}$ of the 
$\bigl(f_1(\Pr_r^{\boxtimes})\times (r-1)\bigr)$-matrix which represents the 
elements of $\{\upc_{C}\colon C\in \mathcal{CB}_{\Pr_r^{\boxtimes}}^{(2)}\}$ as elements of 
$\upC_1(\Pr_r^{\boxtimes};\Z/2)\supseteq \upZ_1(\Pr_r^{\boxtimes};\Z/2)$ w.r.t. the 
standard basis of $\upC_1(\Pr_r^{\boxtimes};\Z/2)$, is the element $A$ of  
$(\Z/2)^{[r-1]^2}$ which is defined by $A\bigl [ x_1y_1 , C_{\ev,r}^{x_1y_1} \bigr ]:=1$, 
$A\bigl [ x_iy_i , C_{\ev,r}^{x_jy_j} \bigr ]:=1$ 
for every $(i,j)\in\bigsqcup_{2\leq \iota \leq r-1}\{(\iota,\iota-1),(\iota,\iota)\}$ 
and $A\bigl [ x_iy_i , C_{\ev,r}^{x_jy_j} \bigr ]:=0$  for every other 
$(i,j)\in \{1,\dotsc,r-1\}^2$. This is a band matrix which in particular is `lower' 
triangular with its main diagonal filled entirely with ones, hence nonsingular. The 
existence of one such minor alone implies the claim in the case $X=\Pr_r^{\boxtimes}$. 
As to the case $X=\Pr_r^{\boxminus}$, due to the similar definition of 
$\mathcal{CB}_{\Pr_r^{\boxminus}}^{(2)}$ compared to $\mathcal{CB}_{\Pr_r^{\boxtimes}}^{(2)}$, 
a proof in this case is obtained if in the first paragraph 
`$\Pr_r^{\boxtimes}$' is replaced by `$\Pr_r^{\boxminus}$', 
`$C_{\ev,r}^{x_1y_1}$' by `$C_{\boxminus,\ev,r}^{x_1y_1}$' and 
`$C_{\ev,r}^{x_iy_i}$' by `$C_{\boxminus,\ev,r}^{x_iy_i}$'.
As to \ref{98786543322154675878987} in the cases $X=\upM_r^{\boxtimes}$ 
(respectively,  $X=\upM_r^{\boxminus}$), due to the similar definition of 
$\mathcal{CB}_{\upM_r^{\boxtimes}}^{(2)}$ compared to $\mathcal{CB}_{\Pr_r^{\boxtimes}}^{(2)}$, 
a proof of these two cases is obtained if in the first paragraph 
`even $r\geq 4$' is replaced by `odd $r\geq 5$', 
`$\Pr_r^{\boxtimes}$' by `$\upM_r^{\boxtimes}$' (respectively, `$\upM_r^{\boxminus}$'), 
`$C_{\ev,r}^{x_1y_1}$' by `$C_{\od,r}^{x_1y_1}$' (respectively, `$C_{\boxminus,\od,r}^{x_1y_1}$'), 
and `$C_{\ev,r}^{x_iy_i}$' by `$C_{\od,r}^{x_iy_i}$' (respectively, 
`$C_{\boxminus,\od,r}^{x_iy_i}$'). This completes the proof 
of \ref{98786543322154675878987} in its entirety. 

As to \ref{98897665642334334544334879768} in the case $X=\Pr_r^{\boxtimes}$, for an 
arbitrary even $r\geq 4$ let $c$ $\in$ 
$\left\langle\mathcal{CB}_{\Pr_r^{\boxtimes}}^{(1)}\right\rangle_{\Z/2}$ $\cap$ 
$\left\langle\mathcal{CB}_{\Pr_r^{\boxtimes}}^{(2)}\right\rangle_{\Z/2}$ be arbitrary. 
Then there exist $(\lambda^{(1)})\in (\Z/2)^{[5]}$ and 
$(\lambda^{(2)})\in (\Z/2)^{[r-1]}$ such that 

{\small
\begin{minipage}[b]{0.5\linewidth}
\begin{enumerate}[label={\rm($\boxtimes$.Su \arabic{*})},leftmargin=5em,start=1]
\item\label{7089878676755443323454554} 
$c = \sum_{1\leq i \leq 5} \lambda_i^{(1)} \ \upc_{C_{\ev,r,i}}$ \quad ,
\end{enumerate}
\end{minipage}
\begin{minipage}[b]{0.5\linewidth}
\begin{enumerate}[label={\rm($\boxtimes$.Su \arabic{*})},leftmargin=5em,start=2]
\item\label{7096433434455676743331} 
$c = \sum_{1\leq i \leq r-1} \lambda_i^{(2)} \ \upc_{C_{\ev,r}^{x_iy_i}}$ \quad .
\end{enumerate}
\end{minipage}
}
where $\upc_{M}$ for some set of edges $M$ denotes the element 
$c\in\upC_1(\Pr_r^{\boxtimes};\Z/2)$ with $\Supp(c)=M$. 
We now show by contradiction that 
$\lambda_1^{(2)} = \dotsm = \lambda_{r-1}^{(2)} = 0$, hence 
$\left\langle\mathcal{CB}_{\Pr_r^{\boxtimes}}^{(1)}\right\rangle_{\Z/2}$ $\cap$ 
$\left\langle\mathcal{CB}_{\Pr_r^{\boxtimes}}^{(2)}\right\rangle_{\Z/2}$ $=$ $\{0\}$. 
To this end, we make the assumption that, on the contrary, 
{\small
\begin{equation}\label{98776434546587978787656554543}
\lambda_i^{(2)} = 1 \hspace{5em}\text{(for at least one $1\leq i \leq r-1$)}\quad .
\end{equation}
}
Drawing on the facts (straightforward to check using the definitions \ref{425486446546434454567565454548} 
and \ref{867554134354675546485657563365}), 
{\small
\begin{enumerate}[label={\rm(F\arabic{*})},leftmargin=6em] 
\item\label{8747854872042828703487934732} 
$\{x_2y_2,x_3y_3,\dotsc,x_{r-1}y_{r-1}\} =  
C_{\ev,r,1}\cap C_{\ev,r,2}\cap 
C_{\ev,r,3}\cap C_{\ev,r,4}\cap C_{\ev,r,5}$ \quad ,
\item\label{87987958798787026702560250506509} 
$x_0x_{r-1}\in C_{\ev,r,1} \cap C_{\ev,r,2}$ \quad , \qquad 
$x_0x_{r-1}\notin C_{\ev,r,3}\cup C_{\ev,r,4}\cup C_{\ev,r,5}$
\quad ,
\item\label{489238928728787934879487974030} 
$y_0y_{r-1}\notin C_{\ev,r,1} \cup C_{\ev,r,2}$ \quad , \qquad 
$y_0y_{r-1}\in C_{\ev,r,3} \cap C_{\ev,r,4} \cap C_{\ev,r,5}$ 
\quad ,
\item\label{9348704187013287987932871248714}
$\{x_2y_2,x_3y_3,\dotsc,x_{r-1}y_{r-1}\}\cap C_{\ev,r}^{x_iy_i} \neq \emptyset$ 
for every $1\leq i \leq r-1$ \quad ,
\item\label{87138791243783287211872732896765} 
$ \{ i\in\{ 1,2,\dotsc,r-1\}\colon x_1y_1\in C_{\ev,r}^{x_iy_i}\} = \{1\}$ \quad ,
\item\label{878713783126782678267216712678326789} 
$\{ i\in \{1,\dotsc,r-1\}\colon zx_1\in C_{\ev,r}^{x_iy_i} \} = \{ 1,\dotsc, r-2\}$ 
\quad ,
\item\label{89870918068789643545684866898} 
$\{ \iota\in\{ 1,2,\dotsc,r-1\}\colon x_iy_i\in C_{\ev,r}^{x_\iota y_\iota}\} = \{ i-1,i \}$ 
for every $2\leq i \leq r-1$ \quad ,
\item\label{987665443565676867675431313313343443}
$\{zy_1,x_0y_0\}\cap C_{\ev,r}^{x_iy_i} = \emptyset$ for every $1\leq i \leq r-1$ 
\quad , 
\item\label{89867675650988978676554546576786} 
$\{x_0x_{r-1},y_0y_{r-1}\}\subseteq C_{\ev,r}^{x_iy_i}$ for every $1\leq i \leq r-2$ 
\quad , 
\item\label{8986775654433443556657688}
$\{x_0x_{r-1},y_0y_{r-1}\}\cap C_{\ev,r}^{x_{r-1}y_{r-1}} = \emptyset$\quad , 
\end{enumerate}
}
we can now reason as follows, distinguishing whether $x_2y_2\in\Supp(c)$ or not: 

\textsl{Case~1.} $x_2y_2\in\Supp(c)$. Then \ref{7089878676755443323454554} 
together with \ref{8747854872042828703487934732} implies that 
$\lvert \{ i\in\{1,\dotsc,5\}\colon \lambda_i^{(1)} = 1 \}\rvert$ is odd, 
and this implies that exactly one of the two numbers 
$\lvert \{ i\in\{1,2\}\colon \lambda_i^{(1)}= 1\} \rvert$ and 
$\lvert \{ i\in\{3,4,5\}\colon \lambda_i^{(1)}= 1\} \rvert$ 
is odd, which combined with \ref{7089878676755443323454554}, 
\ref{87987958798787026702560250506509} and \ref{489238928728787934879487974030} 
implies that $\lvert \{ x_0x_{r-1}, y_0y_{r-1} \} \cap \Supp(c) \rvert = 1$. But this 
contradicts \ref{7096433434455676743331}, \ref{89867675650988978676554546576786} 
and \ref{8986775654433443556657688}, which when taken together imply 
that $\lvert \{ x_0x_{r-1}, y_0y_{r-1} \} \cap \Supp(c) \rvert \in \{0,2\}\not\ni 1$. 
This contradiction proves that Case~1 cannot occur (and we have not used our 
assumption \eqref{98776434546587978787656554543} to arrive at this conclusion). 

\textsl{Case~2.} $x_2y_2\notin\Supp(c)$. From this we deduce 

{\small
\begin{minipage}[b]{0.5\linewidth}
\begin{enumerate}[label={\rm(Co \arabic{*})}]
\item\label{897865654657787889898876}
$zy_1\notin\Supp(c)$\; ,
\item\label{987865334568789089879877869} 
$\lvert \{ i\in\{1,\dotsc,5\}\colon \lambda_i^{(1)} = 1 \}\rvert$ is even\; , 
\item\label{989009887665543434090909} 
$\{x_2y_2,x_3y_3,\dotsc,x_{r-1}y_{r-1}\}\cap\Supp(c)=\emptyset$ \; ,
\item\label{9887675443456565665657675} 
$\lambda_1^{(2)} = \dotsm = \lambda_{r-1}^{(2)} = 1$ \; , 
\end{enumerate}
\end{minipage}
\begin{minipage}[b]{0.4\linewidth}
\begin{enumerate}[label={\rm(Co \arabic{*})},start=5] 
\item\label{98767867566577887989}
$\{x_0x_{r-1},y_0y_{r-1}\}\cap\Supp(c)=\emptyset$\; ,
\item\label{128787927832867932673269189}
$zx_1\notin\Supp(c)$\; ,
\item\label{988676756554546678878989}
$x_1y_1\in\Supp(c)$\; ,
\item\label{8987656544545445545455665} 
$x_0y_0\notin\Supp(c)$\; .
\end{enumerate}
\end{minipage}
}

These claims can be justified thus: \ref{897865654657787889898876} follows 
from \ref{7096433434455676743331} and \ref{987665443565676867675431313313343443}. 
\ref{987865334568789089879877869} follows from combining $x_2y_2\notin\Supp(c)$ with 
\ref{7089878676755443323454554} and \ref{8747854872042828703487934732}. 
\ref{989009887665543434090909} follows from \ref{987865334568789089879877869}, 
\ref{7089878676755443323454554} and \ref{8747854872042828703487934732}. 
\ref{9887675443456565665657675} follows from \ref{989009887665543434090909}, 
\ref{7096433434455676743331}, \ref{9348704187013287987932871248714} and 
\ref{89870918068789643545684866898}, together with our assumption 
\eqref{98776434546587978787656554543}. (At this instance we have learned 
that in \eqref{98776434546587978787656554543}---if it is true---the existential 
quantifier must necessarily hold as a universal quantifier.) 
\ref{98767867566577887989} follows from \ref{9887675443456565665657675}, 
\ref{7096433434455676743331}, \ref{89867675650988978676554546576786}, 
\ref{8986775654433443556657688} and the evenness of $r-2$. 
\ref{128787927832867932673269189} follows from \ref{9887675443456565665657675}, 
\ref{878713783126782678267216712678326789}, and the evenness of
$r-2 = \lvert \{1,\dotsc, r-2\} \rvert$. \ref{988676756554546678878989} follows from 
\ref{7096433434455676743331} and \ref{87138791243783287211872732896765}.  
\ref{8987656544545445545455665} follows from \ref{7096433434455676743331} and 
\ref{987665443565676867675431313313343443}. 

Now from \ref{98767867566577887989} combined with 
\ref{87987958798787026702560250506509} and \ref{489238928728787934879487974030}, 
it follows that \ref{987865334568789089879877869} cannot be true with both 
$n_{1,2}:=\lvert \{ i\in\{1,2\}\colon \lambda_i^{(1)}= 1\} \rvert$ and 
$n_{3,4,5}:=\lvert \{ i\in\{3,4,5\}\colon \lambda_i^{(1)}= 1\} \rvert$ being odd. 
Therefore both $n_{1,2}$ and $n_{3,4,5}$ must be even. 
To finish the proof, we use the abbreviations 
$S_{1,2} := \Supp ( \lambda_1^{(1)} \cdot\upc_{C_{\ev,r,1}} + 
\lambda_2^{(1)} \cdot\upc_{C_{\ev,r,2}})$
and 
$S_{3,4,5} := \Supp( \lambda_3^{(1)} \cdot\upc_{C_{\ev,r,3}} + 
\lambda_4^{(1)} \cdot\upc_{C_{\ev,r,4}} + 
\lambda_5^{(1)} \cdot\upc_{C_{\ev,r,5}} )$, with which we have 
{\small
\begin{equation}\label{776554434243547898989888}
\Supp(c) = S_{1,2}\vartriangle S_{3,4,5}\hspace{5em} 
\text{(symmetric difference)}\quad ,
\end{equation}
}
and distinguish cases according to the value of $n_{1,2}\in\{0,2\}$. 

\textsl{Case~2.1.} $n_{1,2} = 0$. Then in particular 
$x_1y_1\notin S_{1,2}$, $zx_1\notin S_{1,2}$ and $zy_1\notin S_{1,2}$. 

\textsl{Case~2.1.1.} $n_{3,4,5}=0$. This implies that $S_{3,4,5}=\emptyset$, and this 
together with $x_1y_1\notin S_{1,2}$ and \eqref{776554434243547898989888} in 
particular implies $x_1y_1\notin\Supp(c)$, 
contradicting \ref{988676756554546678878989} and proving Case~2.1.1 to be impossible. 

\textsl{Case~2.1.2.} $n_{3,4,5} = 2$. Let us distinguish whether 
$\lambda_5^{(1)}\in\Z/2$ is $0$ or $1$ (the motivation for this being that 
$zy_1\notin S_{1,2}$ and among 
$C_{\ev,r,3}$, $C_{\ev,r,4}$, $C_{\ev,r,5}$ only $C_{\ev,r,5}$ contains $zy_1$, making it 
possible to draw a conclusion from the value of $\lambda_5^{(1)}$). 
If $\lambda_5^{(1)} = 1$, then 
$zy_1\in\Supp(\lambda_5^{(1)}\cdot\upc_{C_{\ev,r,5}})$ and moreover exactly one of 
$\lambda_3^{(1)}$ and $\lambda_4^{(1)}$ is $= 1$. 
Whichever it is, due to $zy_1\notin\Supp(\lambda_3^{(1)}\cdot\upc_{C_{\ev,r,3}})$ and 
$zy_1\notin\Supp(\lambda_4^{(1)}\cdot\upc_{C_{\ev,r,4}})$ it follows that
$zy_1\in S_{3,4,5}$, which combined with $zy_1\notin S_{1,2}$ and 
\eqref{776554434243547898989888} implies $zy_1\in\Supp(c)$, contradicting 
\ref{897865654657787889898876} and proving $\lambda_5^{(1)}= 1$ to be impossible. 
If on the contrary $\lambda_5^{(1)}=0$, then $\lambda_3^{(1)} = \lambda_4^{(1)} = 1$ and 
it follows that $zx_1\in S_{3,4,5}$. 
Being within Case~2.1 we know that $zx_1\notin S_{1,2}$, hence 
in view of \eqref{776554434243547898989888} we may conclude that $zx_1\in\Supp(c)$, 
contradicting \ref{128787927832867932673269189}, proving Case~2.1.2, and therefore 
Case~2.1 as a whole, to be impossible. 

\textsl{Case~2.2.} $n_{1,2} = 2$. 
This implies $x_0y_0\notin S_{1,2}$, $x_1y_1\in S_{1,2}$ and $zx_1\in S_{1,2}$. 
Again it remains to consider the possibilities for 
$n_{3,4,5} \in \{0,1,2,3\}$ to be even. 

\textsl{Case~2.2.1.} $n_{3,4,5} = 0$. Then $S_{3,4,5}=\emptyset$, and this together 
with $zx_1\in S_{1,2}$ and \eqref{776554434243547898989888} in particular implies 
$zx_1\in \Supp(c)$, contradicting \ref{128787927832867932673269189} and proving 
Case~2.2.1 to be impossible. 

\textsl{Case~2.2.2.} $n_{3,4,5} = 2$. Again we analyse this case by distinguishing 
whether $\lambda_5^{(1)}\in\Z/2$ is $0$ or $1$. If $\lambda_5^{(1)}=1$, then 
exactly one of $\lambda_3^{(1)}$ and $\lambda_4^{(1)}$ is $=1$ and, whichever it is, 
it follows that $x_1y_1\in S_{3,4,5}$. Being within Case~2.2. we 
know $x_1y_1\in S_{1,2}$, hence in view of \eqref{776554434243547898989888} it 
follows that $x_1y_1\notin\Supp(c)$, contradicting \ref{988676756554546678878989} 
and proving $\lambda_5^{(1)}=1$ to be impossible. If on 
the contrary $\lambda_5^{(1)} = 0$, then $\lambda_3^{(1) } = \lambda_4^{(1)} = 1$ and  
it follows that $x_0y_0\in S_{3,4,5}$. Being within Case~2.2 we know that 
$x_0y_0\in S_{1,2}$ which in view of \eqref{776554434243547898989888} implies 
$x_0y_0\in\Supp(c)$, contradicting \ref{8987656544545445545455665} and proving 
$\lambda_5^{(1)} = 0$ to be impossible. This proves Case~2.2.2, and therefore also 
Case~2.2 and the entire Case~2, to be impossible. Since the mutually exclusive 
Cases~1 and 2 both lead to contradictions, the assumption 
\eqref{98776434546587978787656554543} is false, completing the proof of 
\ref{98897665642334334544334879768} for $X=\Pr_r^{\boxtimes}$.  

As to \ref{98897665642334334544334879768} in the case $X=\upM_r^{\boxtimes}$, the 
proof given 
for the case $X=\Pr_r^{\boxtimes}$ can be repeated with the appropriate minor 
changes to obtain a proof in the case $X=\upM_r^{\boxtimes}$, these changes being 
the following: first of all, the statements 
\ref{8747854872042828703487934732}--\ref{8986775654433443556657688} have been 
chosen in such a way 
that each of \ref{8747854872042828703487934732}--\ref{8986775654433443556657688} 
becomes a true statement about the set $\mathcal{CB}_{\upM_{r}^{\boxtimes}}^{(2)}$ if exactly 
the following changes are made in 
\ref{8747854872042828703487934732}--\ref{8986775654433443556657688}: `$\ev$' is to 
be replaced by `$\od$', `$x_0x_{r-1}$' is to be replaced by `$x_0y_{r-1}$' (all 
occurrences, i.e. in \ref{87987958798787026702560250506509}, 
in \ref{89867675650988978676554546576786} and in \ref{8986775654433443556657688}), 
`$y_0y_{r-1}$' is to be replaced by `$y_0x_{r-1}$' (all occurrences, 
i.e. in \ref{489238928728787934879487974030}, 
in \ref{89867675650988978676554546576786} and in \ref{8986775654433443556657688}). 
With the references to 
\ref{8747854872042828703487934732}--\ref{8986775654433443556657688} now referring 
to the statements thus modified, the only thing to be done in the entire 
remaining proof of the case $X=\Pr_r^{\boxtimes}$ (in order to arrive at a proof of 
the case $X=\upM_r^{\boxtimes}$) is to replace `$x_0x_{r-1}$' by `$x_0y_{r-1}$' and 
`$y_0y_{r-1}$' by `$y_0x_{r-1}$' at all three occurences of these edges (twice in 
Case~1, once in \ref{98767867566577887989}), and moreover to replace 
`$\ev$' by `$\od$'. This completes the proof of \ref{98897665642334334544334879768} 
for $X=\upM_r^{\boxtimes}$. 

As to \ref{98897665642334334544334879768} in the case $X = \Pr_r^{\boxminus}$, for an 
arbitrary even $r\geq 4$ let $c$ $\in$ 
$\left\langle\mathcal{CB}_{\Pr_r^{\boxminus}}^{(1)}\right\rangle_{\Z/2}$ $\cap$ 
$\left\langle\mathcal{CB}_{\Pr_r^{\boxminus}}^{(2)}\right\rangle_{\Z/2}$ be arbitrary. 
Then there are $(\lambda^{(1)})\in(\Z/2)^{[5]}$ and $(\lambda^{(2)})\in(\Z/2)^{[r-1]}$ 
such that 

{\small
\begin{minipage}[b]{0.5\linewidth}
\begin{enumerate}[label={\rm($\boxminus$.Su \arabic{*})},leftmargin=5em,start=1]
\item\label{88766655465787654554656565} 
$c = \sum_{1\leq i \leq 5} \lambda_i^{(1)} \cdot\upc_{C_{\boxminus,\ev,r,i}}$ \quad ,
\end{enumerate}
\end{minipage}
\begin{minipage}[b]{0.5\linewidth}
\begin{enumerate}[label={\rm($\boxminus$.Su \arabic{*})},leftmargin=5em,start=2]
\item\label{9889755468998765466667877777777777} 
$c = \sum_{1\leq i \leq r-1} \lambda_i^{(2)}\cdot\upc_{C_{\boxminus,\ev,r}^{x_iy_i}}$\quad .
\end{enumerate}
\end{minipage}
}
where $C_M$ for some set of edges $M$ denotes the unique element 
$c\in\upC_1(\Pr_r^{\boxminus};\Z/2)$ with $\Supp(c)=M$. We will show directly (this 
time we will not have any use for making the 
assumption \eqref{98776434546587978787656554543}) that $c=0$, hence 
$\left\langle\mathcal{CB}_{\Pr_r^{\boxminus}}^{(1)}\right\rangle_{\Z/2}$ $\cap$ 
$\left\langle\mathcal{CB}_{\Pr_r^{\boxminus}}^{(2)}\right\rangle_{\Z/2}$ $=$ $\{0\}$. 
We can now use the evident facts 
{\scriptsize
\begin{enumerate}[label={\rm($\boxminus$.F\arabic{*})},leftmargin=6em] 
\item\label{8976745357688998876545} 
$z'z''\in \bigcap_{1\leq i \leq r-1} C_{\boxminus,\ev,r}^{x_iy_i}$ \quad ,
\item\label{897665434345467887987687879} 
$\{ x_0x_{r-1}, y_0y_{r-1}\} \subseteq C_{\boxminus,\ev,r}^{x_iy_i}$ for every 
$1\leq i \leq r-2$ \quad , 
\item\label{907665554476878997654} 
$z'z''\notin C_{\boxminus,\ev,r,1}$, 
$z'z''\in C_{\boxminus,\ev,r,2}$, 
$z'z''\notin C_{\boxminus,\ev,r,3}$, 
$z'z''\in C_{\boxminus,\ev,r,4}$, 
$z'z''\notin C_{\boxminus,\ev,r,5}$ \quad , 
\item\label{87867665555656565567}
for even $r\geq 4$, the only circuit among the circuits 
in $\mathcal{CB}_{\Pr_r^{\boxminus}}^{(2)}$ to 
contain $x_0z''$ is $C_{\boxminus,\ev,r}^{x_{r-1}y_{r-1}}$ \quad ,
\item\label{87866565454434454}
for even $r\geq 4$, the only circuit among the circuits 
in $\mathcal{CB}_{\Pr_r^{\boxminus}}^{(1)}\sqcup\mathcal{CB}_{\Pr_r^{\boxminus}}^{(2)}$ to 
contain $y_1z''$ is $C_{\boxminus,\ev,r,5}$ \quad ,
\item\label{986754354798765434657788787} 
for even $r\geq 4$, the only circuit among the circuits in 
$\mathcal{CB}_{\Pr_r^{\boxminus}}^{(1)}\sqcup\mathcal{CB}_{\Pr_r^{\boxminus}}^{(2)}$ to 
contain $x_0y_0$ is $C_{\boxminus,\ev,r,4}$ \quad , 
\item\label{879564534456787644554567876652354}
for even $r\geq 4$, the only circuits among the circuits 
in $\mathcal{CB}_{\Pr_r^{\boxminus}}^{(1)}\sqcup\mathcal{CB}_{\Pr_r^{\boxminus}}^{(2)}$
to contain an odd number of the two edges $x_0x_{r-1}$ and $y_0y_{r-1}$ are 
the two circuits $C_{\boxminus,\ev,r,3}$ and $C_{\boxminus,\ev,r,5}$ \quad ,
\end{enumerate}
}
to argue as follows. First of all, we immediately conclude that  
\begin{enumerate}[label={\rm($\boxminus$.Co \arabic{*})},leftmargin=6em] 
\item\label{9876656546578799889889} $\lambda_4^{(1)} = 0$ because 
of \ref{88766655465787654554656565} and \ref{9889755468998765466667877777777777} 
combined with \ref{986754354798765434657788787}\quad , 
\item\label{88767565454767897978} $\lambda_5^{(1)}=0$ because 
of \ref{88766655465787654554656565} and \ref{9889755468998765466667877777777777} 
combined with \ref{87866565454434454} \quad . 
\end{enumerate}

\textsl{Case~1.} $\lvert \{i\in \{1,\dotsc,r-1\}\colon \lambda_i^{(2)} = 1 \}\rvert$ 
is odd. Then \ref{9889755468998765466667877777777777} together 
with \ref{8976745357688998876545} implies $z'z''\in\Supp(c)$. Therefore, and 
because of \ref{907665554476878997654}, it follows that exactly one of 
$\lambda_2^{(1)}$ and $\lambda_4^{(1)}$ is equal to $1$, hence $\lambda_2^{(1)}=1$ 
because of \ref{9876656546578799889889}. Now let us consider $\lambda_3^{(1)}$. It 
cannot be true that $\lambda_3^{(1)}=1$, since then 
\ref{879564534456787644554567876652354} implies $\lambda_5^{(1)}=1$, contradicting 
\ref{88767565454767897978}. Thus we may assume that $\lambda_3^{(1)}=0$. This 
implies $x_1y_1\in\Supp(c)$ due to \ref{88766655465787654554656565}, 
$\lambda_2^{(1)}=1$, \ref{9876656546578799889889} and the fact that .
for every even $r\geq 4$, the only circuits among the circuits 
in $\mathcal{CB}_{\Pr_r^{\boxminus}}^{(1)}$ to contain $x_1y_1$ are 
$C_{\boxminus,\ev,r,2}$ and $C_{\boxminus,\ev,r,3}$. 
Among the coefficients 
$\lambda_i^{(1)}$, $1\leq i \leq 5$, only the value of $\lambda_1^{(1)}$ is not 
yet known to us. 

\textsl{Case~1.1.} $\lambda_1^{(1)} = 0$. Then 
$z'y_0\in C_{\boxminus,\ev,r,2}$, $\lambda_2^{(1)}=1$ and 
$\lambda_1^{(1)}=\lambda_3^{(1)}=\lambda_4^{(1)}=\lambda_5^{(1)}=0$ together 
with \ref{88766655465787654554656565} imply that $z'y_0\in\Supp(c)$. 
Since for every even $r\geq 4$, the only circuit among the circuits in 
$\mathcal{CB}_{\Pr_r^{\boxminus}}^{(2)}$ to contain $y_0z'$ is $C_{\boxminus,\ev,r}^{x_{r-1}y_{r-1}}$, 
from $z'y_0\in\Supp(c)$ it follows that $\lambda_{r-1}^{(2)}=1$. Being within Case~1, 
this implies that 
$\lvert \{i\in \{1,\dotsc,r-2\}\colon \lambda_i^{(2)} = 1 \}\rvert$ is even, 
which by \ref{897665434345467887987687879} implies that 
$\{ x_0x_{r-1}, y_0y_{r-1} \}\cap \Supp(c) = \emptyset$; 
but $\{ x_0x_{r-1}, y_0y_{r-1} \}\subseteq C_{\boxminus,\ev,r,2}$ together with 
\ref{88766655465787654554656565}, 
$\lambda_1^{(1)}=\lambda_3^{(1)}=\lambda_4^{(1)}=\lambda_5^{(1)}=0$ and 
$\lambda_2^{(1)}=1$ implies that, on the contrary, 
$\{ x_0x_{r-1}, y_0y_{r-1} \}\subseteq \Supp(c)$. This contradiction proves Case~1.1 
to be impossible. 

\textsl{Case~1.2.} $\lambda_1^{(1)} = 1$. Then 
$\lambda_3^{(1)}=\lambda_4^{(1)}=\lambda_5^{(1)}=0$, $\lambda_1^{(1)}=\lambda_2^{(1)}=1$ 
and \ref{88766655465787654554656565} together imply $x_0z''\notin\Supp(c)$. 
Because of \ref{87867665555656565567}, this implies $\lambda_{r-1}^{(2)}=0$. Being 
within Case~1, it follows that  
$\lvert \{i\in \{1,\dotsc,r-2\}\colon \lambda_i^{(2)} = 1 \}\rvert$ is even, 
hence \ref{897665434345467887987687879} together 
with \ref{9889755468998765466667877777777777} implies that 
$\{ x_0x_{r-1}, y_0y_{r-1} \}\cap\Supp(c) = \emptyset$; but 
$\lambda_3^{(1)}=\lambda_4^{(1)}=\lambda_5^{(1)}=0$, $\lambda_1^{(1)}=\lambda_2^{(1)}=1$, 
and \ref{9889755468998765466667877777777777}, together with the facts that 
$\{x_0x_{r-1},y_{r-1}\}\cap C_{\boxminus,r,1} = \emptyset$ 
and $\{x_0x_{r-1},y_{r-1}\} \subseteq C_{\boxminus,r,2}$  imply 
$\{ x_0x_{r-1}, y_0y_{r-1} \} \subseteq \Supp(c)$, contradiction. Therefore 
Case~1.2 is impossible, too. 

This proves the entire Case~1 to be impossible. 

\textsl{Case~2.} $\lvert \{i\in\{1,\dotsc,r-1\}\colon\lambda_i^{(2)}=1 \}\rvert$ 
is even. Then \ref{9889755468998765466667877777777777} together 
with \ref{8976745357688998876545} imply $z'z''\notin\Supp(c)$, hence in view 
of \ref{907665554476878997654} it follows that 
either $\lambda_2^{(1)}=\lambda_4^{(1)}=0$ or $\lambda_2^{(1)}=\lambda_4^{(1)}=1$, 
the latter being impossible because of \ref{9876656546578799889889}. 
Therefore, $\lambda_2^{(1)}=\lambda_4^{(1)}=0$. 

\textsl{Case~2.1.} $\lambda_3^{(1)} =1 $. This, together 
with \ref{88766655465787654554656565}, \ref{9889755468998765466667877777777777}, 
\ref{879564534456787644554567876652354} and the fact that every 
$C\in \{C_{\boxminus,\ev,r}^{x_iy_i}\colon 1\leq i \leq r-1 \}$ contains an even number 
of the edges $x_0x_{r-1}$ and $y_0y_{r-1}$, implies that we must have 
$\lambda_5^{(1)}=1$, contradicting \ref{88767565454767897978}.  

\textsl{Case~2.2.} $\lambda_3^{(1)}=0$. Then \ref{88766655465787654554656565}, 
$\lambda_2^{(1)}=0$ and the fact that $C_{\boxminus,r,2}$ and $C_{\boxminus,r,3}$ are 
the only circuits among $C_{\boxminus,r,1},\dotsc,C_{\boxminus,r,5}$ to contain 
$x_1y_1$ imply that $x_1y_1\notin\Supp(c)$. Hence from
\ref{9889755468998765466667877777777777}, together with the fact that 
for every even $r\geq 4$, the only circuit among the circuits in 
$\mathcal{CB}_{\Pr_r^{\boxminus}}^{(2)}$ to contain $x_1y_1$ is $C_{\boxminus,\ev,r}^{x_1y_1}$, 
it follows that $\lambda_1^{(2)}=0$. Now let us consider $\lambda_{r-1}^{(2)}$. If we 
would have $\lambda_{r-1}^{(2)}=1$, then---being within Case~2---the number 
$\lvert \{i\in\{2,\dotsc,r-2\}\colon\lambda_i^{(2)}=1 \}\rvert$ is odd, hence 
$\{ x_0x_{r-1}, y_0y_{r-1}\}\subseteq \Supp(c)$ 
by \ref{9889755468998765466667877777777777} and \ref{897665434345467887987687879}; 
but this contradicts \ref{88766655465787654554656565}, 
$\lambda_2^{(1)}=\lambda_3^{(1)}=0$, 
$\{ x_0x_{r-1}, y_0y_{r-1}\}\cap C_{\boxminus,r,1}=\emptyset$, 
$\{ x_0x_{r-1}, y_0y_{r-1}\}\cap C_{\boxminus,r,4}=\emptyset$ and 
$\{ x_0x_{r-1}, y_0y_{r-1}\}\cap C_{\boxminus,r,5}=\{x_0x_{r-1}\}$, which when taken 
together imply 
$\{ x_0x_{r-1}, y_0y_{r-1}\}\cap\Supp(c) \in \bigl\{\emptyset, \{x_0,x_{r-1}\}\bigr\}$. 
Therefore we may assume $\lambda_{r-1}^{(2)}=0$. Then---being within Case~2---the 
number $\lvert \{i\in\{2,\dotsc,r-2\}\colon\lambda_i^{(2)}=1 \}\rvert$ is even, 
hence \ref{9889755468998765466667877777777777} and 
\ref{897665434345467887987687879} imply that 
$\{x_0x_{r-1}, y_0y_{r-1}\}\cap \Supp(c) = \emptyset$. Since among 
$C_{\boxminus,r,1},\dotsc,C_{\boxminus,r,5}$ only $C_{\boxminus,r,5}$ contains $x_0x_{r-1}$, 
this implies $\lambda_5^{(1)}=0$. We now know that 
$\lambda_2^{(2)}=\lambda_3^{(2)}=\lambda_4^{(2)}=\lambda_5^{(2)}=0$. Therefore, if we 
would have $\lambda_1^{(1)}=1$, then $x_1z''\in \Supp(c)$, contradicting the fact 
that \ref{9889755468998765466667877777777777}, $\lambda_{r-1}^{(2)}=0$, the evenness of 
$\lvert \{i\in\{2,\dotsc,r-2\}\colon\lambda_i^{(2)}=1 \}\rvert$ and the property 
$x_1z''\in C_{\boxminus,\ev,r}^{x_iy_i}$ for every $1\leq i \leq r-2$
together imply $x_1z''\notin\Supp(c)$. Thus, 
$\lambda_1^{(1)}=\lambda_2^{(2)}=\lambda_3^{(2)}=\lambda_4^{(2)}=\lambda_5^{(2)}=0$, hence 
$c=0$ by \ref{88766655465787654554656565}, completing the proof of 
$\left\langle\mathcal{CB}_{\Pr_r^{\boxminus}}^{(1)}\right\rangle_{\Z/2}$ $\cap$ 
$\left\langle\mathcal{CB}_{\Pr_r^{\boxminus}}^{(2)}\right\rangle_{\Z/2}$ $=$ $\{0\}$ in Case~2. 
This completes the proof of \ref{98897665642334334544334879768} in the 
case $X=\Pr_r^{\boxminus}$. 
As to \ref{98897665642334334544334879768} in the case $X=\upM_r^{\boxminus}$, again 
the proof of the case $X=\Pr_r^{\boxminus}$ can be repeated with the necessary small 
changes, namely: throughout, `$\Pr_r$' is to be replaced by `$\upM_r$', 
`$\ev$' by `$\od$', `$x_0x_{r-1}$' by `$x_0y_{r-1}$', `$y_0y_{r-1}$' by `$y_0x_{r-1}$'. 
Afterwards, \ref{8976745357688998876545}---\ref{897665434345467887987687879} are 
still true and the proof given for the case $X=\Pr_r^{\boxminus}$ has become a proof 
for the case $X=\upM_r^{\boxminus}$. The proof of 
Lemma~\ref{98897665642334334544334879768} is now complete.

As to \ref{9iu8u7yr5f4d3s44d5789u}.\ref{3287923478248979847238947238}, note that 
$\dim_{\Z/2} \upZ_1(\Pr_r^{\boxtimes};\Z/2) = (3r+4) - (2r+1) + 1 = r+4$, and that 
\ref{38794320986754435465687}, \ref{98786543322154675878987} and 
\ref{98897665642334334544334879768} in the case $X=\Pr_r^{\boxtimes}$ together imply 
that for even $r\geq 4$ we have 
$\dim_{\Z/2} \left ( \left\langle \mathcal{CB}_{\Pr_r^{\boxtimes}}^{(1)}\right\rangle_{\Z/2} + 
\left\langle \mathcal{CB}_{\Pr_r^{\boxtimes}}^{(2)}\right\rangle_{\Z/2} \right ) = r+4$. 
Therefore the set $\left\langle \mathcal{CB}_{\Pr_r^{\boxtimes}}^{(1)}\right\rangle_{\Z/2} + 
\left\langle \mathcal{CB}_{\Pr_r^{\boxtimes}}^{(2)}\right\rangle_{\Z/2}$ is a $\Z/2$-linear 
subspace of $\upZ_1(\Pr_r^{\boxtimes};\Z/2)$ having the same dimension as the ambient 
space. In a vector space this implies equality as a set. This 
proves \ref{3287923478248979847238947238}. An entirely analogous argument 
proves \ref{9iu8u7yr5f4d3s44d5789u}.\ref{935872702348723487932893248934}. 

As to \ref{9iu8u7yr5f4d3s44d5789u}.\ref{349078364454938747474747329}, note that 
$\dim_{\Z/2} \upZ_1(\Pr_r^{\boxminus};\Z/2) = (3r+6) - (2r+2) + 1 = r+5$ and that 
\ref{38794320986754435465687}, \ref{98786543322154675878987} and 
\ref{98897665642334334544334879768} in the case $X=\Pr_r^{\boxtimes}$ together imply 
that for even $r\geq 4$ we have 
$\dim_{\Z/2} \left ( \left\langle \mathcal{CB}_{\Pr_r^{\boxtimes}}^{(1)}\right\rangle_{\Z/2} + 
\left\langle \mathcal{CB}_{\Pr_r^{\boxtimes}}^{(2)}\right\rangle_{\Z/2} \right ) = r+4$. 
Since $\dim_K(V/U) = \dim_K(V) - \dim_K(U)$ for finite-dimensional 
$K$-vectors spaces $U\subseteq V$, this implies \ref{349078364454938747474747329}. 
An entirely analogous argument 
proves \ref{9iu8u7yr5f4d3s44d5789u}.\ref{1364867134086734108413408746}. 

As to \ref{9iu8u7yr5f4d3s44d5789u}.\ref{83276268932879348793254328793}, this 
claim follows quickly from \ref{349078364454938747474747329}: it suffices to note 
that in $\Pr_r^{\boxminus}$ there actually exists a circuit of length $f_0(\cdot)-1$. 
Since $f_0(\Pr_r^{\boxminus})=f_0(\upM_r^{\boxminus})=r+4$ is even for even $r$, and 
since the support of the sum of two circuits of even length is an 
edge-disjoint union of circuits of even length, any circuit of length $f_0(\cdot)-1$ 
in $\Pr_r^{\boxminus}$  is not contained in 
$\left\langle \mathcal{CB}_{\Pr_r^{\boxtimes}}^{(1)}\right\rangle_{\Z/2} + 
\left\langle \mathcal{CB}_{\Pr_r^{\boxtimes}}^{(2)}\right\rangle_{\Z/2}$, hence after adding 
this circuit to the set 
$\mathcal{CB}_{\Pr_r^{\boxtimes}}^{(1)}\sqcup \mathcal{CB}_{\Pr_r^{\boxtimes}}^{(2)}$, 
the $\Z/2$-linear span has dimension 
$(r+4)+1 = r+5 = \dim_{\Z/2} \upZ_1(\Pr_r^{\boxminus};\Z/2)$, proving 
\ref{83276268932879348793254328793}, since that finite-dimensional vector 
spaces do not contain proper subspaces of the same dimension. An entirely analogous 
argumentation 
proves \ref{9iu8u7yr5f4d3s44d5789u}.\ref{328238346786839328743267237773}, this time 
using \ref{1364867134086734108413408746}.

We have now proved \ref{3w44345465566565565r56}--\ref{676776hy7h77h7767765656657}: 
property\ref{3w44345465566565565r56} follows from \ref{3287923478248979847238947238} 
(which is equivalent to $\Pr_r^{\boxtimes}\in\mathrm{cd}_0\mathcal{C}_{\{f_0(\cdot)\}}$), 
\ref{t6t67y8u899o8987g6g678jik90o} and Definition~\ref{def:monotonegraphpropertiesGlzetaandBGlzeta}.\ref{it:def:monotonegraphpropertiesGlzetaandBGlzeta:Glzeta}; 
property \ref{8767856454354e54565676767} follows 
from \ref{935872702348723487932893248934} 
(which is equivalent to $\upM_r^{\boxtimes}\in\mathrm{cd}_0\mathcal{C}_{\{f_0(\cdot)\}}$), 
\ref{89878675643545465r5556} and Definition~\ref{def:monotonegraphpropertiesGlzetaandBGlzeta}.\ref{it:def:monotonegraphpropertiesGlzetaandBGlzeta:Glzeta}; 
property \ref{7667545344edr65656554r54d} follows 
from \ref{349078364454938747474747329} 
(which is equivalent to $\Pr_r^{\boxminus}\in\mathrm{cd}_1\mathcal{C}_{\{f_0(\cdot)\}}$), 
\ref{32671163432867327682} and Definition~\ref{def:monotonegraphpropertiesGlzetaandBGlzeta}.\ref{it:def:monotonegraphpropertiesGlzetaandBGlzeta:Glzeta}; 
property \ref{6756667898887y7yy7765544544ew} follows 
from \ref{1364867134086734108413408746} 
(which is equivalent to $\upM_r^{\boxminus}\in\mathrm{cd}_1\mathcal{C}_{\{f_0(\cdot)\}}$), 
\ref{0o978986765565454454e} and Definition~\ref{def:monotonegraphpropertiesGlzetaandBGlzeta}.\ref{it:def:monotonegraphpropertiesGlzetaandBGlzeta:Glzeta}; 
property \ref{87876665r5r5r5665656565} follows 
from \ref{83276268932879348793254328793} (which is equivalent to 
$\Pr_r^{\boxminus}\in\mathrm{cd}_0\mathcal{C}_{\{f_0(\cdot)-1,f_0(\cdot)\}}$), 
\ref{32671163432867327682} and Definition~\ref{def:monotonegraphpropertiesGlzetaandBGlzeta}.\ref{it:def:monotonegraphpropertiesGlzetaandBGlzeta:Glzeta};  
property \ref{676776hy7h77h7767765656657} follows follows 
from \ref{328238346786839328743267237773} (which is equivalent to 
$\upM_r^{\boxminus}\in\mathrm{cd}_0\mathcal{C}_{\{f_0(\cdot)-1,f_0(\cdot)\}}$), 
\ref{0o978986765565454454e} and Definition~\ref{def:monotonegraphpropertiesGlzetaandBGlzeta}.\ref{it:def:monotonegraphpropertiesGlzetaandBGlzeta:Glzeta}. 

As to \ref{it:roughbandwidthstatements}, the bandwidth of any of $\upC_n^2$, $\CL_r$, 
$\Pr_r^{\boxtimes}$, $\Pr_r^{\boxminus}$, $\upM_r^{\boxtimes}$ and $\upM_r^{\boxminus}$ is 
constant, i.e. does not grow with $r$ or $n$. Therefore \ref{it:roughbandwidthstatements} 
is true in stronger form than is stated here. Since knowing the exact bandwidths 
would profit us nothing given the proof technology that is available at present, 
knowing the statement \ref{it:roughbandwidthstatements} is enough. To prove it, we 
employ a general characterization \cite[Theorem~8]{MR2644412} of 
low-bandwidth graphs due to 
J.~B{\"o}ttcher, K.~P.~Pruessmann, A.~Taraz and A.~W{\"u}rfl. This characterization 
allows us to prove the smallness of the bandwidth for each of the rather different 
graphs $\upC_n^2$, $\CL_r$, $\Pr_r^{\boxtimes}$, $\Pr_r^{\boxminus}$, $\upM_r^{\boxtimes}$ 
and $\upM_r^{\boxminus}$ without any close attention to the specifics of these 
graphs---simply by exhibiting small separators: in $C_n^2$ there does not exist any 
edge between the two sets 
$A := \{\overline{0},\overline{1},\dotsc,\overline{\lfloor\tfrac{n}{2}\rfloor-2}\}$ 
and $B:=\{\overline{\lfloor\tfrac{n}{2}\rfloor+1},\dotsc,\overline{n-3}\}$, and since 
both $\lvert A\rvert$ and $\lvert B\rvert$ are $\leq\tfrac23 f_0(C_n^2)$, the 
existence of the separator 
$S:=\{\overline{\lfloor\tfrac{n}{2}\rfloor-1}, \overline{\lfloor\tfrac{n}{2}\rfloor},
\overline{n-2}, \overline{n-1}\}$ implies that the separation number (in the sense 
of \cite[Definition~2]{MR2644412}) of $C_n^2$ is at most $4$. 
The claim \ref{it:roughbandwidthstatements} in the case of $X=\CL_r$ now follows 
by \cite[Theorem~8, equivalence (2) $\Leftrightarrow$ (4)]{MR2644412}. 
To prove the case $X=\CL_r$ of \ref{it:roughbandwidthstatements}, in the first 
sentence of this paragraph use 
`$A := \bigsqcup_{1\leq i\leq \lfloor \frac{r}{2} \rfloor - 1} \{ a_i, b_i \}$', 
`$B := \bigsqcup_{\lfloor \frac{r}{2} \rfloor + 1\leq i \leq r-1} \{ a_i, b_i \}$' 
and `$S:=\{a_0,b_0,a_{\lfloor \frac{r}{2} \rfloor},b_{\lfloor \frac{r}{2} \rfloor}\}$'. 
To prove the cases $X\in\{\Pr_r^{\boxtimes},\upM_r^{\boxtimes}\}$ of 
\ref{it:roughbandwidthstatements}, in the first sentence of this paragraph use 
`$A:=\{z\}\sqcup\bigsqcup_{1\leq i\leq \lfloor \frac{r}{2} \rfloor - 1} \{ x_i, y_i \}$', 
`$B := \bigsqcup_{\lfloor \frac{r}{2} \rfloor + 1\leq i \leq r-1} \{ x_i, y_i \}$' 
and `$S:=\{a_0,b_0,a_{\lfloor \frac{r}{2} \rfloor},b_{\lfloor \frac{r}{2} \rfloor}\}$'. 
To prove the cases $X\in\{\Pr_r^{\boxminus},\upM_r^{\boxminus}\}$ 
of \ref{it:roughbandwidthstatements}, use $B$ and $S$ as in the preceding sentence but 
`$A:=\{z',z''\}\sqcup\bigsqcup_{1\leq i\leq \lfloor \frac{r}{2} \rfloor - 1} \{ x_i, y_i \}$'. 
This proves the statement about the bandwidth in \ref{it:roughbandwidthstatements}, for every 
$Y \in\{C_n^2,\CL_r,\Pr_r^{\boxtimes},\Pr_r^{\boxminus},\upM_r^{\boxtimes},\upM_r^{\boxminus}\}$. 

As to the additional claims concerning 
$Y \in\{\Pr_r^{\boxtimes},\Pr_r^{\boxminus},\upM_r^{\boxtimes},\upM_r^{\boxminus}\}$, we 
explicitly give suitable maps $\upb_Y$ and $\uph_Y$ (thus for 
$\Pr_r^{\boxtimes}$, $\Pr_r^{\boxminus}$, $\upM_r^{\boxtimes}$, $\upM_r^{\boxminus}$ giving 
another proof of the small bandwidth). 

As to $Y=\Pr_r^{\boxtimes}$, for every even $r\geq 4$, the map $\upb_Y$ 
defined by $z\mapsto 1$, $x_0\mapsto 2$
$x_i\mapsto 4i$ for $1\leq i \leq\lfloor\tfrac{r}{2}\rfloor$, 
$x_i\mapsto 4(r-i) + 2$ for $\lfloor\tfrac{r}{2}\rfloor+1\leq i \leq r-1$, 
$y_0\mapsto 3$, 
$y_i\mapsto 4i + 1$ for $1\leq i \leq\lfloor\tfrac{r}{2}\rfloor$,  and 
$y_i\mapsto 4(r-i) + 3$ for $\lfloor\tfrac{r}{2}\rfloor+1\leq i \leq r-1$ is a 
bandwidth-$4$-labelling of $\Pr_r^{\boxtimes}$. Moreover, the map $\uph_Y$ defined by 
$z\mapsto 0$, $x_i\mapsto 1$ and $y_i\mapsto 2$ for even 
$0\leq i \leq r-1$, $x_i\mapsto 2$ and $y_i\mapsto 1$ for odd $0\leq i \leq r-1$, 
is a $3$-colouring of $\Pr_r^{\boxtimes}$ which for every $r$ large enough to have 
simultaneously $\beta f_0(Y) = \beta (2r+1) \geq 1 = \lvert \uph_Y^{-1}(0) \rvert$ and 
$8\cdot 2\cdot \beta\cdot f_0(Y) = 16\beta(2r+1)\geq 2$ obviously satisfies the 
requirement in Theorem~\ref{thm:BoettcherSchachtTaraz2009} of being 
$(8\cdot 2\cdot \beta\cdot f_0(Y),4\cdot 2\cdot \beta\cdot f_0(Y))$-zero-free 
w.r.t. $\upb_Y$ and having $\lvert\uph_Y^{-1}(0)\rvert \leq \beta f_0(Y)$. This proves 
\ref{it:roughbandwidthstatements} for $Y=\Pr_r^{\boxtimes}$. 

As to $Y=\upM_r^{\boxtimes}$, the same map $\upb_Y$ that was 
defined at the beginning of the preceding paragraph is (this being the reason for 
having used $\lfloor\cdot\rfloor$ despite even $r$) 
a bandwidth-$5$-labelling of $\upM_r^{\boxtimes}$ (which has bandwidth $4$, by the 
way), for every odd $r\geq 5$. Likewise, the same map $\uph_Y$ defined in the 
preceding paragraph is a $3$-colouring of $\upM_r^{\boxtimes}$ for which concerning 
$\lvert\uph_Y^{-1}(0)\rvert$ and zero-freeness w.r.t. $\upb_Y$ exactly the same can 
be said as in the previous paragraph. This proves \ref{it:roughbandwidthstatements} 
for $Y=\upM_r^{\boxtimes}$. 

As to $Y = \Pr_r^{\boxminus}$, for every even $r\geq 4$, the map $\upb_Y$ defined by 
$z'\mapsto 1$, $z''\mapsto 2$, $x_0\mapsto 3$, $y_0\mapsto 4$, 
$x_i\mapsto 4i+1$ and $y_i\mapsto 4i+2$ for $1\leq i\leq\lfloor\tfrac{r}{2}\rfloor$, 
$x_i\mapsto 4(r-i) + 3$ and $y_i\mapsto 4(r-i) + 4$ for 
$\lfloor\tfrac{r}{2}\rfloor + 1\leq i \leq r-1$ 
is a bandwidth-$5$-labelling of $\Pr_r^{\boxminus}$. Moreover, the map $\uph_Y$ defined 
by $z'\mapsto 2$, $z''\mapsto 0$, 
$x_0\mapsto 1$, $y_0\mapsto 2$, $x_1\mapsto 0$, $y_1\mapsto 1$, 
$x_i\mapsto 1$ and $y_i\mapsto 2$ for even $2\leq i \leq r-1$, and 
$x_i\mapsto 2$ and $y_i\mapsto 1$ for odd $2\leq i \leq r-1$ is a 
$3$-colouring of $\Pr_r^{\boxminus}$. In view of 
$\lvert\uph_Y^{-1}(0)\rvert =2$ and in particular in view of the fact that 
$\upb(\uph_Y^{-1}(0)) = \{2,5\}$ for \emph{every} even $r\geq 4$ (i.e. the distance 
along the bandwidth-$5$-labelling of the two $0$-labelled vertices is 
constantly $3$, i.e. independent of $f_0(Y)$), it is obvious that $\uph_Y$ is 
$(8\cdot 2\cdot \beta\cdot f_0(Y),4\cdot 2\cdot \beta\cdot f_0(Y))$-zero-free 
w.r.t. $\upb_Y$, provided that $r$ is large enough to have 
$4\cdot 2\cdot \beta\cdot f_0(Y) = 8\beta (2r+2)\geq 5$ (when testing the 
zero-freeness-property for the vertex $z'=\upb_Y^{-1}(1)$, we have to make five 
steps forward in order to have a zero-free interval ahead of us---but this is also 
the highest number of necessary repositioning steps we can encounter). If $r$ is 
large enough to have 
$\beta f_0(Y) = \beta (2r+2) \geq 2 = \lvert \uph_Y^{-1}(0) \rvert$, too, 
then both requirements about $\uph_Y$ are met. This completes the proof 
of \ref{it:roughbandwidthstatements} in the case $Y = \Pr_r^{\boxminus}$. 

As to $Y = \upM_r^{\boxminus}$, replace `$\upM_r^{\boxtimes}$' by `$\upM_r^{\boxminus}$' 
throughout the paragraph before the last (and delete the comment about bandwidth equal 
to $4$) in order to arrive at a proof of \ref{it:roughbandwidthstatements} in the 
case $Y = \upM_r^{\boxminus}$. 

Since $n_0$ can be chosen large enough to simultaneously 
satisfy the finitely many (and only $\beta$-dependent) requirements on $r$ 
encountered in the above cases, we have now 
proved \ref{it:roughbandwidthstatements} (where the $n_0$ is promised \emph{before} 
the choice $Y\in\{\upC_n^2,\CL_r,\Pr_r^{\boxtimes},\Pr_r^{\boxminus},\upM_r^{\boxtimes},\upM_r^{\boxminus}\}$ is made) in its entirety. 
\end{proof}

Let us close Section~\ref{t6t66t7y8778655454445e45} with two comments. Firstly, 
our proof of \ref{83276268932879348793254328793} shows that out of the generating 
set $\mathcal{C}_{\{f_0(\cdot)-1,f_0(\cdot)\}}(\Pr_r^{\boxminus})$ it suffices to 
use only \emph{one} circuit having the length $f_0(\cdot)-1$. The same is true 
for $\mathcal{C}_{\{f_0(\cdot)-1,f_0(\cdot)\}}(\Pr_r^{\boxminus})$. Since the 
monotonicity-argument used for proving Theorem~\ref{thm:mainresults} keeps adding 
Hamilton circuits to the current generating system---but never adds a circuit of 
length $f_0(\cdot)-1$ to it---this also implies that in 
Theorem~\ref{thm:mainresults}.\ref{thm:minDegOneHalfPlusGammaImpliesAllOneCanAskForWhenOrderIsEven}, a single circuit of length $f_0(\cdot)-1$ suffices in a generating set. 
Secondly, with $\Pr_r^{\boxminus,-} := \Pr_r^{\boxminus} -  x_0 z''$ and 
$\upM_r^{\boxminus,-} := \upM_r^{\boxminus} -x_0 z''$, the study of the special 
cases $r=4$ and $r=6$ strongly suggests that for every even $r\geq 4$,

{\scriptsize 
\begin{minipage}[b]{0.48\linewidth}
\begin{enumerate}[label={\rm($\boxminus,-$.(\arabic{*}))},start=0]
\item\label{39074298364454938747474747329} 
$\dim_{\Z/2}\bigl(\upZ_1(\Pr_r^{\boxminus,-};\Z/2) / \langle \mathcal{H}(\Pr_r^{\boxminus}) \rangle_{\Z/2}\bigr) = 2$ \; ,
\end{enumerate}
\end{minipage}
\begin{minipage}[b]{0.48\linewidth}
\begin{enumerate}[label={\rm($\boxminus,-$.(\arabic{*}))},start=1]
\item\label{1364867134086734108764408746} 
$\dim_{\Z/2}\bigl(\upZ_1(\upM_r^{\boxminus,-};\Z/2) / \langle \mathcal{H}(\upM_r^{\boxminus}) \rangle_{\Z/2}\bigr) = 2$ \; ,
\end{enumerate}
\end{minipage}
\newline
}
but we will not prove this in this paper. The statements 
\ref{39074298364454938747474747329} and \ref{1364867134086734108764408746}, if true 
in general, provide a justification for employing the symmetry-destroying edge 
$x_0 z''$: because of these two codimensions, the  graphs $\Pr_r^{\boxminus,-}$ and 
$\upM_r^{\boxminus,-}$---while spanning---are unsuitable as auxiliary substructures for 
proving \ref{thm:minDegOneHalfPlusGammaImpliesAllOneCanAskForWhenOrderIsEven} 
in Theorem~\ref{thm:mainresults}; for when adding an edge, the codimension of the 
span of Hamilton circuits within the cycle space can at most stay the same, never 
decrease.

\section{An alternative argumentation for step \ref{it:proofstrategy:structuralgraphtheorystep}}\label{sec:alternativeargumentation}

The entire Section~\ref{sec:alternativeargumentation} is logically superfluous 
for our proof of Theorem~\ref{thm:mainresults}. It is included here for two 
reasons. Firstly, to provide readers with an alternative way of arguing. 
Secondly, it seems conceivable that if there should ever exist graph-theoretical 
characterizations of the property of a Hamilton-generated cycle space, 
then non-separating induced circuits will play a role in them. 

The following theorem proved by A.~Kelmans will save us work in proving 
Lemma~\ref{nonsepindcircuitsPr}: 

\begin{theorem}[{cf. \cite[Theorem~4.5.2]{MR2159259} and \cite[p.~264]{MR625067}}]\label{thm:planaritycriterionintermsofnonseparatinginducedcircuits}
If $X$ is a $3$-connected graph, it is planar if and only if 
each $e\in\upE(X)$ is contained in at most two non-separating 
induced circuits of $X$. \hfill $\Box$
\end{theorem}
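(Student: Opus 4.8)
The plan is to establish the two implications separately, in each case reducing to a classical planarity criterion. Write $\mathcal{N}(X)$ for the set of non-separating induced circuits of $X$ (the \emph{peripheral} circuits). Throughout I would freely use W.~T.~Tutte's theorem, already quoted in strategy~\ref{it:way2} above, that for $3$-connected $X$ the set $\{\upc_C\colon C\in\mathcal{N}(X)\}$ generates $\upZ_1(X;\Z/2)$, together with the classical planarity criterion of MacLane: a finite graph is planar if and only if $\upZ_1(\cdot\,;\Z/2)$ admits a basis in which every edge lies in at most two basis circuits (a \emph{$2$-basis}).

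For the implication ``planar $\Rightarrow$ at most two'', I would first fix a plane embedding of $X$ and prove the auxiliary identity that, for $3$-connected planar $X$, the peripheral circuits are \emph{exactly} the face boundaries. That every face boundary is peripheral is routine from $3$-connectedness: a chord would contradict the face being a face, and the residual graph lies in the complementary region, which is connected. For the reverse inclusion I would invoke the Jordan curve theorem: a peripheral circuit $C$ is a simple closed curve splitting the plane into two regions, and since $C$ is non-separating all of $\upV(X)\setminus\upV(C)$ must lie in a single region, while $C$ being induced forbids any chord, so the other region is vertex- and edge-free, i.e. a face. Once this identity is in hand the conclusion is immediate, since in a bridgeless $2$-connected plane graph every edge lies on the boundary of exactly two faces, hence in exactly two peripheral circuits, which is even sharper than the asserted ``at most two''.

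For the converse ``at most two $\Rightarrow$ planar'', the argument is short once Tutte's and MacLane's theorems are available. By Tutte's theorem the peripheral circuits span $\upZ_1(X;\Z/2)$, so one may extract from $\mathcal{N}(X)$ a subfamily $\mathcal{B}$ whose associated vectors form a basis of $\upZ_1(X;\Z/2)$. By hypothesis each edge of $X$ lies in at most two members of $\mathcal{N}(X)$, hence a fortiori in at most two members of the subfamily $\mathcal{B}$; thus $\mathcal{B}$ is a $2$-basis, and MacLane's criterion yields planarity. Note that this direction never invokes the complementary Tutte fact that each edge lies in \emph{at least} two peripheral circuits.

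I expect the main obstacle to be the forward direction's auxiliary identity ``peripheral circuits $=$ face boundaries'', since this is where $3$-connectedness and the topology of the embedding genuinely enter (and where, implicitly, Whitney's uniqueness of the combinatorial embedding lurks). The converse, by contrast, should reduce to essentially one line, because any linearly independent subfamily of an ``at most two'' family is automatically a $2$-basis, so the whole content of that direction is already packaged in the two quoted theorems.
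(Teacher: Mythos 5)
The paper does not actually prove this theorem: it is stated with a terminal $\Box$ and attributed to Kelmans via the two cited sources, so there is no in-paper proof to compare against. Your argument is the standard textbook proof of Kelmans' criterion (it is essentially the proof in the cited Diestel reference): Tutte's generating theorem plus MacLane's $2$-basis criterion for the direction ``at most two $\Rightarrow$ planar'', and the identification of peripheral circuits with face boundaries for the other direction. The backward direction as you give it is complete and correct.

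In the forward direction, however, the two one-line justifications you offer for the inclusion ``face boundary $\Rightarrow$ peripheral'' do not work. A chord of a face boundary $C=\partial f$ does not ``contradict the face being a face'': the chord is simply drawn in the closure of the other region and $f$ remains a face; the real reason $C$ is induced is that a chord $xy$ would make $\{x,y\}$ a $2$-separator (the interiors of the two $x$--$y$ arcs of $C$ cannot be joined in $X-\{x,y\}$ without crossing the chord or passing through the graph-free face $f$), contradicting $3$-connectedness. Likewise, the fact that $X-C$ is drawn inside a single connected region of the plane does not make $X-C$ connected as a graph; two components could both sit in that region, and ruling this out again needs a separator argument. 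Fortunately this inclusion is dispensable for the theorem: the inclusion you do prove correctly (peripheral $\Rightarrow$ face boundary, via the Jordan curve theorem and inducedness) already suffices, because distinct peripheral circuits bound distinct faces and every edge of a $2$-connected plane graph lies on the boundary of at most two faces, so at most two peripheral circuits can contain a given edge. I would therefore either drop the claim of the full identity and keep only the inclusion you need, or supply the genuine separator arguments for the other inclusion.
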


\begin{lemma}\label{nonsepindcircuitsPr}
For every $r\in \Z_{\geq 4}$, the set of nonseparating induced circuits 
in $\Pr_r$ equals 
\begin{equation}\label{specificationofallnonseparatinginducedcircuitsinPr}
 \Nsi_{\Pr_r}  := \{ \upC_{r,1} \} \sqcup \{ \upC_{r,2} \} \sqcup 
\bigsqcup_{0\leq i \leq r-1} \{ \upC_{4,i}\}
\end{equation}
where $\upC_{r,1}:= x_0 x_1 \dotsc x_{r-1} x_0$, 
$\upC_{r,2} := y_0 y_1 \dotsc y_{r-1} y_0$ and 
$\upC_{4,i}:= x_i x_{i+1} y_{i+1} y_i x_i$. In particular there are exactly $r+2$ 
non-separating induced circuits in $\Pr_r$.
\end{lemma}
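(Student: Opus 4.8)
The plan is to prove the asserted equality by two inclusions, invoking Kelmans' criterion (Theorem~\ref{thm:planaritycriterionintermsofnonseparatinginducedcircuits}) only for the harder one. As a preliminary I would record two structural facts valid for every $r\geq 4$: the prism $\Pr_r$ is $3$-connected (it is $3$-regular and has no $2$-vertex-cut), and it is planar (the standard drawing as two concentric $r$-gons joined by the rungs $x_iy_i$). These are precisely the hypotheses needed to apply Theorem~\ref{thm:planaritycriterionintermsofnonseparatinginducedcircuits}.

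First I would verify that each circuit listed in \eqref{specificationofallnonseparatinginducedcircuitsinPr} is indeed non-separating and induced. Each square $\upC_{4,i}$ is induced because its only potential chords $x_iy_{i+1}$ and $x_{i+1}y_i$ are non-edges of $\Pr_r$, and it is non-separating because deleting $\{x_i,x_{i+1},y_i,y_{i+1}\}$ leaves the ladder on $\{x_j,y_j\colon j\notin\{i,i+1\}\}$, which is connected for $r\geq 4$. The two $r$-gons $\upC_{r,1}$ and $\upC_{r,2}$ are induced because $\Pr_r$ joins no two non-consecutive $x$'s (resp.\ $y$'s), and each is non-separating because the graph $\Pr_r-\upC_{r,1}$ (resp.\ $\Pr_r-\upC_{r,2}$) in the sense of the paper's deletion operation is exactly the opposite $r$-gon, which is connected.

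The heart of the argument is the reverse inclusion, and the key observation is an exact incidence count: every edge of $\Pr_r$ lies in precisely two of the $r+2$ listed circuits. Indeed, a rung $x_iy_i$ lies only in $\upC_{4,i-1}$ and $\upC_{4,i}$; an $x$-edge $x_ix_{i+1}$ lies only in $\upC_{r,1}$ and $\upC_{4,i}$; and symmetrically a $y$-edge lies only in $\upC_{r,2}$ and a single square. Granting this, suppose toward a contradiction that some non-separating induced circuit $C$ of $\Pr_r$ is not among the listed ones. Then $C$ contains some edge $e$, and $e$ already belongs to two listed non-separating induced circuits, both distinct from $C$; hence $e$ lies in at least three non-separating induced circuits of $\Pr_r$. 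But $\Pr_r$ is $3$-connected and planar, so Theorem~\ref{thm:planaritycriterionintermsofnonseparatinginducedcircuits} bounds this number by two, a contradiction. Finally, the $r+2$ listed circuits are pairwise distinct, since their vertex sets differ, which yields the stated cardinality.

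I expect the main obstacle to be organisational rather than conceptual: carrying out the incidence bookkeeping cleanly and confirming the $3$-connectedness and planarity hypotheses. The genuinely economical idea is that planarity, through the at-most-two bound of Kelmans' theorem, replaces what would otherwise be an exhaustive enumeration of all circuits of $\Pr_r$.
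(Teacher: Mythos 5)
Your proposal is correct and follows essentially the same route as the paper's proof: verify the listed circuits are non-separating and induced, observe that every edge of $\Pr_r$ already lies in two of them, and then use the planarity and $3$-connectedness of $\Pr_r$ together with Kelmans' criterion (Theorem~\ref{thm:planaritycriterionintermsofnonseparatinginducedcircuits}) to rule out any further non-separating induced circuit. The only cosmetic difference is that the paper reduces the edge-type case analysis via the $x\leftrightarrow y$ symmetry, whereas you enumerate all three edge types directly.
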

\begin{proof}
Inclusion $\supseteq$ is easy to check. What we have to justify is that  
\eqref{specificationofallnonseparatinginducedcircuitsinPr} is the complete list 
of non-separating induced circuits in $\Pr_r$. 
This can be done by working directly from the definitions and distinguishing 
cases but we will take a shortcut via Kelmans' characterization of 
planar $3$-connected graphs:  let $C$ be an arbitrary non-separating induced 
circuit in $\Pr_r$. Suppose that $C$ is missing from 
\eqref{specificationofallnonseparatinginducedcircuitsinPr}. Let $e$ be an arbitrary 
edge of $C$. Since $\upE(C)\subseteq\upE(\Pr_r)$, 
Definition~\ref{def:squareofcircuitprismmobiusladder} implies that there 
is $0\leq j \leq r-1$ with 
$e\in \{ x_j x_{j+1},\ x_j y_j,\ y_j y_{j+1} \}$. By swapping the 
symbols $x$ and $y$ if necessary we may assume that there are only the 
two alternatives $e\in \{ x_j x_{j+1},\ x_j y_j \}$. 
If $e = x_j x_{j+1}$, then $\upC_{r,1}$ and $\upC_{4,j}$ are two distinct 
non-separating induced circuits in $\Pr_r$ which contain the edge $e$. Since 
by assumption $C$ does not appear in 
\eqref{specificationofallnonseparatinginducedcircuitsinPr}, $C$ is a \emph{third} 
non-separating induced circuit containing $e$. This is where Kelmans' theorem comes 
in: it is evident that $\Pr_r$ is planar and also (using Menger's theorem) that 
$\Pr_r$ is $3$-connected for every $r\in \Z_{\geq 3}$, and therefore 
Theorem~\ref{thm:planaritycriterionintermsofnonseparatinginducedcircuits} implies 
that every $e\in\upE(\Pr_r)$ lies in \emph{at most two} non-separating induced 
circuits of $\Pr_r$, a contradiction. Similarly, for the alternative 
$e = x_j y_j$, the circuits $\upC_{4,j-1}$ and $\upC_{4,j}$ are two 
distinct non-separating induced circuits in $\Pr_r$ containing $e$. 
Again, $C$ being a third one is a contradiction to 
Theorem~\ref{thm:planaritycriterionintermsofnonseparatinginducedcircuits}. This 
proves that none of the non-separating induced circuits of $\Pr_r$ has been 
forgotten in \eqref{specificationofallnonseparatinginducedcircuitsinPr}.
\end{proof}

\begin{lemma}\label{nonseparatinginducedcircuitsinPrExpressedAsSumsOfHamCircuitsIfrIsEven} 
Let $r\in \Z_{\geq 3}$ be \emph{even} and 
\begin{align}
H_{\upw,1} & := x_0x_1y_1y_2x_2x_3\dotsc x_{r-1}y_{r-1}y_0x_0 \quad , \label{defOfHw1} \\
H_{\upw,2} & := y_0y_1x_1x_2y_2y_3\dotsc y_{r-1}x_{r-1}x_0y_0 \qquad , \label{defOfHw2} \\
H_i & := x_ix_{i+1}\dotsc x_{i+r-1}y_{i+r-1}y_{i+r-2}\dotsc y_ix_i \quad . \label{defOfHi}
\end{align}
Then every $C\in \Nsi_{\Pr_r}$ can be expressed as a symmetric difference of some 
of the Hamilton circuits in 
$\{H_{\upw,1}\} \sqcup \{ H_{\upw,2}\} \sqcup \bigsqcup_{0\leq i \leq r-1} \{H_i\}$. One 
way to do this the following (for the definition of $\upC_{4,i}$, $\upC_{r,1}$ 
and $\upC_{r,2}$ see \eqref{specificationofallnonseparatinginducedcircuitsinPr}, 
and for the notation `$\upc_{X}$' see 
Section~\ref{section:presentationofmainresults}). Regardless of the value 
of $r\  \mathrm{mod}\  4$, for every $0\leq i \leq r-1$,
\begin{equation}\label{reprofC4i}
\upc_{\upC_{4,i}} = \upc_{H_{\upw,1}} + \upc_{H_{\upw,2}} + \upc_{H_{i+1}} \quad .
\end{equation}
Moreover, with the abbreviation 
$\Sigma:= \sum_{0\leq i \leq \frac{r}{2} - 1} \upc_{H_{2i}}$ , 
if $r\equiv\ 0\ (\mathrm{mod}\ 4)$, then 
\begin{equation}\label{repOfCr1andCr2ifrcongruent0mod4}
\upc_{\upC_{r,1}} = \upc_{H_{\upw,1}} + \Sigma \qquad\text{and}\qquad \upc_{\upC_{r,2}} = 
\upc_{H_{\upw,2}} + \Sigma\quad ,
\end{equation}
while if $r\equiv\ 2\ (\mathrm{mod}\ 4)$, then 
\begin{equation}\label{repOfCr1andCr2ifrcongruent2mod4}
\upc_{\upC_{r,1}} = \upc_{H_{\upw,2}} + \Sigma \qquad\text{and}\qquad \upc_{\upC_{r,2}} = 
\upc_{H_{\upw,1}} + \Sigma\quad .
\end{equation}
\end{lemma}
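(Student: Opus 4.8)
The statement to prove is Lemma~\ref{nonseparatinginducedcircuitsinPrExpressedAsSumsOfHamCircuitsIfrIsEven}, which asserts that each of the $r+2$ non-separating induced circuits of $\Pr_r$ (for even $r$) listed in Lemma~\ref{nonsepindcircuitsPr} can be written as a $\Z/2$-sum of the explicitly given Hamilton circuits $H_{\upw,1}$, $H_{\upw,2}$ and $H_0,\dotsc,H_{r-1}$, and moreover the specific identities \eqref{reprofC4i}, \eqref{repOfCr1andCr2ifrcongruent0mod4} and \eqref{repOfCr1andCr2ifrcongruent2mod4} do the job. Since all the objects involved are concrete edge-sets, the entire proof is a verification of equalities of $\Z/2$-chains, i.e.\ of symmetric differences of edge-sets. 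The plan is therefore to treat the three displayed identities one at a time, in each case computing the symmetric difference on the right and checking edge-by-edge that it equals the claimed left-hand side. The work is bookkeeping, not ingenuity; the main thing to get right is the parity arithmetic of the indices modulo $r$ and modulo $4$.

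\textbf{Step 1: the square faces $\upC_{4,i}$.} First I would verify \eqref{reprofC4i}. The plan is to expand $\upc_{H_{\upw,1}}+\upc_{H_{\upw,2}}$ first and describe its support explicitly: comparing the definitions \eqref{defOfHw1} and \eqref{defOfHw2}, both Hamilton circuits use every ``rung'' edge $x_jy_j$ in an alternating pattern, so their symmetric difference is supported on a predictable union of rung- and rail-edges. Then add $\upc_{H_{i+1}}$, where $H_{i+1}$ from \eqref{defOfHi} is the Hamilton circuit running up the $x$-rail, across one rung, and back down the $y$-rail starting at index $i+1$. The key observation is that $H_{i+1}$ shares all of its rail-edges with $H_{\upw,1}\vartriangle H_{\upw,2}$ except exactly those bounding the single square $\upC_{4,i}$, so that triple symmetric difference collapses to the four edges $x_ix_{i+1}$, $x_{i+1}y_{i+1}$, $y_{i+1}y_i$, $y_ix_i$. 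Because the indexing in \eqref{defOfHi} is cyclic (indices mod $r$), this argument is uniform in $i$ and independent of $r\bmod 4$, matching the ``regardless of the value of $r\bmod 4$'' clause.

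\textbf{Step 2: the two rail-circuits $\upC_{r,1}$, $\upC_{r,2}$.} Next I would handle \eqref{repOfCr1andCr2ifrcongruent0mod4} and \eqref{repOfCr1andCr2ifrcongruent2mod4}, whose right-hand sides both feature $\Sigma=\sum_{0\le i\le \frac r2-1}\upc_{H_{2i}}$, the sum over even-indexed Hamilton circuits. The plan is to compute the support of $\Sigma$ directly. Each $H_{2i}$ contains both full rails plus two rungs; when one sums the $r/2$ circuits $H_0,H_2,\dotsc,H_{r-2}$ over $\Z/2$, every rung $x_jy_j$ is covered an even or odd number of times in a pattern governed by how the rung-endpoints fall, and every rail-edge is covered exactly $r/2$ times. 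Here the parity of $r/2$ is decisive: when $r\equiv 0\ (\mathrm{mod}\ 4)$ we have $r/2$ even, so all rail-edges cancel out of $\Sigma$ and $\Sigma$ is supported purely on rungs, whereas when $r\equiv 2\ (\mathrm{mod}\ 4)$ we have $r/2$ odd, so the rail-edges survive. Adding $\upc_{H_{\upw,1}}$ or $\upc_{H_{\upw,2}}$ (which contribute all rungs in their alternating fashion plus their own rail-edges) then produces exactly one full rail-cycle; the swap of roles of $H_{\upw,1}$ and $H_{\upw,2}$ between \eqref{repOfCr1andCr2ifrcongruent0mod4} and \eqref{repOfCr1andCr2ifrcongruent2mod4} is precisely an artifact of this parity of $r/2$.

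\textbf{The main obstacle.} I expect the genuinely delicate point to be Step~2, specifically tracking which rung-edges $x_jy_j$ appear in $\Sigma$ and confirming that, after adding the appropriate $\upc_{H_{\upw,\bullet}}$, the rungs all cancel and only the edges of a single rail-cycle ($x_0x_1\dotsc x_{r-1}x_0$ or $y_0y_1\dotsc y_{r-1}y_0$) remain. The cleanest way to manage this is to set up a small incidence table---rows indexed by the edge-classes (two rails' worth of rail-edges plus the $r$ rungs), columns by the generators $H_{\upw,1},H_{\upw,2},H_0,\dotsc,H_{r-1}$---and then read off each identity as a $\Z/2$-linear combination of columns, letting the two residue classes $r\bmod 4$ dictate the parity of the rail-column contributions. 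Once that table is correct, all three identities follow by inspection, and since $\Nsi_{\Pr_r}$ consists of exactly the circuits $\upC_{4,0},\dotsc,\upC_{4,r-1},\upC_{r,1},\upC_{r,2}$ by Lemma~\ref{nonsepindcircuitsPr}, verifying \eqref{reprofC4i}--\eqref{repOfCr1andCr2ifrcongruent2mod4} exhausts every element of $\Nsi_{\Pr_r}$ and completes the proof.
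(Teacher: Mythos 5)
Your Step~1 is sound and in fact slightly slicker than the paper's verification: since $H_{\upw,1}$ and $H_{\upw,2}$ each contain every rung $x_jy_j$ and complementary halves of the rail edges, $\upc_{H_{\upw,1}}+\upc_{H_{\upw,2}}$ is supported on exactly the full edge set of both rails, and $H_{i+1}$ contains all rail edges except $x_ix_{i+1}$ and $y_iy_{i+1}$ together with the two rungs $x_iy_i$ and $x_{i+1}y_{i+1}$, so the triple sum collapses to $\upE(\upC_{4,i})$ as claimed.

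Step~2, however, contains a genuine error in the bookkeeping of $\Sigma$, and the proposed argument would fail there. You assert that ``each $H_{2i}$ contains both full rails plus two rungs''---this is impossible by edge count (a Hamilton circuit of $\Pr_r$ has $2r$ edges, while two full rails already account for $2r$). In fact $H_{2i}$ omits exactly one edge from each rail, namely $x_{2i-1}x_{2i}$ and $y_{2i-1}y_{2i}$, both of which have \emph{odd} smaller index. Consequently your claim that ``every rail-edge is covered exactly $r/2$ times'' is false: as $i$ ranges over $0,\dotsc,\tfrac r2-1$ the omitted edges sweep out precisely the odd-indexed rail edges, so the even-indexed rail edges $x_{2j}x_{2j+1}$, $y_{2j}y_{2j+1}$ lie in all $\tfrac r2$ summands while the odd-indexed ones lie in only $\tfrac r2-1$ of them (and each rung lies in exactly one). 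The distinction is essential. If one follows your accounting for $r\equiv 0\ (\mathrm{mod}\ 4)$, all rail edges would cancel and $\Sigma$ would be supported on the rungs alone; then $\upc_{H_{\upw,1}}+\Sigma$ would be supported on the even-indexed $x$-rail edges together with the odd-indexed $y$-rail edges, i.e.\ a perfect matching rather than the circuit $\upC_{r,1}$, and \eqref{repOfCr1andCr2ifrcongruent0mod4} would not follow. With the correct counts, $\Supp(\Sigma)$ consists of all rungs plus exactly the alternating half of the rail edges whose multiplicity ($\tfrac r2$ or $\tfrac r2-1$) is odd---which half depends on $r\bmod 4$---and adding $\upc_{H_{\upw,1}}$ or $\upc_{H_{\upw,2}}$ then cancels all rungs and one rail's worth of alternating edges while completing the other rail, yielding the stated identities. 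This is exactly the content of the paper's Claims~1--3, and your proposal needs that corrected parity analysis before the incidence-table verification you describe can close the argument.
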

\begin{proof} 
Among all non-trivial coefficient rings, $\Z/2$ is the only one which has the 
convenient property that two chains are equal if and only if their supports are. 
We will make use of this without further mention. 
We first prove \eqref{reprofC4i} by showing 
$\Supp(\upc_{C_{4,i}}) = \upE(\upC_{4,i})$ $=$ 
$\Supp (\upc_{H_{\upw,1}} + \upc_{H_{\upw,2}} + \upc_{H_{i+1}})$. 

As to $\upE(\upC_{4,i}) \subseteq \Supp (\upc_{H_{\upw,1}} + \upc_{H_{\upw,2}} + \upc_{H_{i+1}})$ 
one may argue as follows. There are only three types of $e\in \upE(\upC_{4,i})$, 
namely $e=x_iy_i$, $e=x_ix_{i+1}$ and $e=y_iy_{i+1}$. Regardless of the parity of $i$, 
an $e=x_iy_i$ is simultaneously in $\upE(H_{\upw,1})$, 
in $\upE(H_{\upw,2})$ and in $\upE(H_{i+1})$. Thus, such an $e$ is in the support 
of each of the three summands $\upc_{H_{\upw,1}}$, $\upc_{H_{\upw,2}}$ and $\upc_{H_{i+1}}$ 
in \eqref{reprofC4i}, and this implies 
$e\in \Supp(\upc_{H_{\upw,1}} + \upc_{H_{\upw,2}} + \upc_{H_{i+1}})$. 
For the other two types of $e\in \upE(\upC_{4,i})$, we have to pay attention to 
the parity of $i$: if $i$ is even, then $x_ix_{i+1}\in \upE(H_{\upw,1})$, 
$x_i x_{i+1} \notin \upE(H_{\upw,2})$, $y_{i} y_{i+1}\notin \upE(H_{\upw,1})$ 
and $y_iy_{i+1}\in \upE(H_{\upw,2})$, while if $i$ is odd, the latter four 
statements are true with $\in$ and $\notin$ interchanged. This shows that, for 
whatever parity of $i$, both $x_i x_{i+1}$ and $y_i y_{i+1}$ are contained in the 
support of exactly one of the two summands $\upc_{H_{\upw,1}}$ and $\upc_{H_{\upw,2}}$. 
Concerning the third summand $\upc_{H_{i+1}}$ we see from its definition that 
regardless of the parity of $i$ the edges $x_i x_{i+1}$ and $y_i y_{i+1}$ are precisely 
those two edges of the two 
circuits $x_0x_1\dotsc x_{r-1}x_0$ and $y_0y_1\dotsc y_{r-1}y_0$ which are 
missing from $\upE(H_{i+1})$. Therefore, for both parities of $i$, for both 
$e\in\{ x_ix_{i+1},\ y_iy_{i+1}\}$ we know that $e$ is contained in exactly one 
support of the three summands $\upc_{H_{\upw,1}}$, $\upc_{H_{\upw,2}}$ and $\upc_{H_{i+1}}$ 
in \eqref{reprofC4i}, and therefore 
$e\in \Supp(\upc_{H_{\upw,1}} + \upc_{H_{\upw,2}} + \upc_{H_{i+1}})$. This completes 
the proof of 
$\upE(\upC_{4,i}) \subseteq \Supp (\upc_{H_{\upw,1}} + \upc_{H_{\upw,2}} + \upc_{H_{i+1}})$. 

As to 
$\upE(\upC_{4,i}) \supseteq \Supp (\upc_{H_{\upw,1}} + \upc_{H_{\upw,2}} + \upc_{H_{i+1}})$ we 
prove the equivalent inclusion 
$\upE(\Pr_r) \setminus \upE(\upC_{4,i}) \subseteq \upE(\Pr_r) \setminus \Supp(\upc_{H_{\upw,1}} + \upc_{H_{\upw,2}} + \upc_{H_{i+1}})$, 
thus taking advantage of a less complex description of the left-hand side of the 
inclusion: the set $\upE(\Pr_r) \setminus \upE(\upC_{4,i})$ can be classified into 
three type of edges, namely $x_\iota y_\iota$ for every 
$\iota\in [0,r-1]\setminus\{i,i+1\}$, and the types $x_\iota x_{\iota+1}$ and 
$y_\iota y_{\iota+1}$ for every $\iota\in [0,r-1]\setminus\{i,i+1\}$. As to the 
type $x_\iota y_\iota$, by definition of $H_{i+1}$ we have 
$x_\iota y_\iota\in \upE(H_{\upw,1}) \cap \upE(H_{\upw,2})$ but 
$x_\iota y_\iota \notin \upE(H_{i+1})$, and therefore 
$x_\iota y_\iota\notin\Supp ( \upc_{H_{\upw,1}} + \upc_{H_{\upw,2}} + \upc_{H_{i+1}})$, 
for every $\iota\in [0,r-1]\setminus\{i,i+1\}$. As to the types 
$x_\iota x_{\iota+1}$ and $y_\iota y_{\iota+1}$ our inspection of the 
$\iota\in[0,r-1]\setminus\{i\}$ has to pay attention to the parity of $\iota$: 
for every even $\iota\in[0,r-1]\setminus\{i\}$ we have 
$x_\iota x_{\iota+1} \in \upE(H_{\upw,1}) \cap \upE(H_{i+1})$ but 
$e\notin \upE(H_{\upw,2})$ and therefore 
$x_\iota x_{\iota+1}\notin \Supp(\upc_{H_{\upw,1}} + \upc_{H_{\upw,2}} + \upc_{H_{i+1}})$, 
while  $y_\iota y_{\iota+1} \in \upE(H_{\upw,2}) \cap \upE(H_{i+1})$ but 
$y_\iota y_{\iota+1} \notin \upE(H_{\upw,1})$ and therefore 
$y_\iota y_{\iota+1} \notin \Supp(\upc_{H_{\upw,1}} + \upc_{H_{\upw,2}} + \upc_{H_{i+1}})$; 
for every odd $\iota\in[0,r-1]\setminus\{i\}$ we have 
$x_\iota x_{\iota+1} \in \upE(H_{\upw,2}) \cap \upE(H_{i+1})$ but 
$x_\iota x_{\iota+1} \notin\upE(H_{\upw,1})$ and therefore
$x_\iota x_{\iota+1}\notin \Supp(\upc_{H_{\upw,1}} + \upc_{H_{\upw,2}} + \upc_{H_{i+1}})$, 
while 
$y_\iota y_{\iota+1} \in \upE(H_{\upw,1}) \cap \upE(H_{i+1})$ but 
$y_\iota y_{\iota+1}\notin \upE(H_{\upw,2})$ and therefore 
$y_\iota y_{\iota+1}\notin \Supp(\upc_{H_{\upw,1}} + \upc_{H_{\upw,2}} + \upc_{H_{i+1}})$. 
All told, none of the edges of the stated types is in 
 $\Supp(\upc_{H_{\upw,1}} + \upc_{H_{\upw,2}} + \upc_{H_{i+1}})$ and this completes the proof 
of 
$\upE(\upC_{4,i}) \supseteq \Supp (\upc_{H_{\upw,1}} + \upc_{H_{\upw,2}} + \upc_{H_{i+1}})$. 

By the two preceding paragraphs, $\Supp(\upc_{\upC_{4,i}})$ $=$ 
$\upE(\upC_{4,i}) = \Supp (\upc_{H_{\upw,1}} + \upc_{H_{\upw,2}} + \upc_{H_{i+1}})$. This 
completes the proof of \eqref{reprofC4i}. 

To prove \eqref{repOfCr1andCr2ifrcongruent0mod4} 
and \eqref{repOfCr1andCr2ifrcongruent2mod4} we will---since both equations 
involve this sum---first analyse the sum $\Sigma$ by itself, proving five claims 
which will combine to a proof of \eqref{repOfCr1andCr2ifrcongruent0mod4} 
and \eqref{repOfCr1andCr2ifrcongruent2mod4}.

\textsl{Claim~1:} For every even $r$ and every $i_0\in [0, r-1]$ the 
edge $x_{i_0} y_{i_0}$ lies in exactly one of the $\frac{r}{2}$ summands 
$\{ H_{2i}\colon 0\leq i \leq \frac{r}{2} - 1 \}$ comprising $\Sigma$. 
Proof of Claim~1:  Let $i_0\in [0, r-1]$ be given. For every 
$0\leq i \leq \frac{r}{2}-1$, there are exactly two edges of type $x_\iota y_\iota$ 
in  $H_{2i}$, namely $x_{2i} y_{2i}$ and $x_{2i+r-1} y_{2i+r-1}$. Since $2i$ is even 
and $r$ is even by assumption, $(2i+r-1)\ \mathrm{mod}\ r$ is odd. Therefore, 
if $i_0$ is even, only the edges $x_{2i} y_{2i}$ have the potential to be equal 
to $x_{i_0} y_{i_0}$ and for $i\in [0, r-1]$, there is the unique 
solution $i=\frac{\iota_0}{2}$ for the equation $x_{2i} y_{2i} = x_{\iota_0} y_{\iota_0}$, 
while if $\iota_0$ is odd, only the edges  $x_{2i+r-1} y_{2i+r-1}$ qualify and 
for $i\in [0, r-1]$, there is the unique solution $i = \frac{\iota_0+1}{2}$ for 
$x_{2i+r-1} y_{2i+r-1} = x_{\iota_0} y_{\iota_0}$. This proves Claim~1.

\textsl{Claim~2:} For every even $r\in \Z_{\geq 3}$ and every \emph{even} 
$\iota_0\in [0, r-1]$, both $x_{\iota_0} x_{\iota_0+1}$ and 
$y_{\iota_0} y_{\iota_0+1}$ lie in each of the 
$\{ H_{2i}\colon 0\leq i \leq \frac{r}{2}-1 \}$. In particular, they both lie 
in exactly $\frac{r}{2}$ (supports of) summands of $\Sigma$. In particular, if 
$r\equiv 0\ (\mathrm{mod}\ 4)$, then both $x_{\iota_0} x_{\iota_0+1}$ and 
$y_{\iota_0} y_{\iota_0+1}$ lie in an even number of supports, and 
if $r\equiv 2\ (\mathrm{mod}\ 4)$, they both lie in an odd number of supports. 
Proof of Claim~2: Let an even $\iota_0\in [0, r-1]$ be given. It has to be shown 
that both $x_{\iota_0} x_{\iota_0+1}$ and $y_{\iota_0} y_{\iota_0+1}$ are edges of $H_{2i}$ 
for every $i\in [0, \frac{r}{2}-1]$. 
Since $H_{2i} = x_{2i}x_{2i+1}\dotsc x_{2i+r-1}y_{2i+r-1}y_{2i+r-2}\dotsc y_{2i}x_{2i}$, it 
is evident that the only edge of type $x_{\iota} x_{\iota+1}$ which is missing 
from $\upE(H_{2i})$ is $x_{2i+r-1} x_{2i}$. If $x_{2i+r-1} x_{2i}$ were 
equal to $x_{\iota_0} x_{\iota_0+1}$, then the evenness of $\iota_0$ implies $2i=\iota_0$ 
and therefore $\iota_0 + 1 = 2i+r-1 = \iota_0 + r-1$ $\Leftrightarrow$ $1 = -1$, 
to be interpreted as an equation in the group $\Z/r$, which because of $r\geq 5$ 
is a contradiction. Therefore, indeed 
$x_{\iota_0} x_{\iota_0+1} \in \upE(H_{2i})$. An entirely analogous argument 
proves that $y_{\iota_0} y_{\iota_0+1} \in \upE(H_{2i})$. Since the two other 
statements in Claim~2 are mere specializations of the first, the proof of 
Claim~2 is complete. 

\textit{Claim~3:} For every even $r\in \Z_{\geq 3}$ and every \emph{odd} 
$\iota_0 \in [0, r-1]$, both 
$x_{\iota_0} x_{\iota_0+1}$ and $y_{\iota_0} y_{\iota_0+1}$ lie in each of 
$\{ H_{2i}\colon i\in [0,\frac{r}{2}-1]\setminus \{\frac{\iota_0+1}{2}\}\}$. 
However, for $i=\frac{\iota_0+1}{2}$ both 
$x_{\iota_0} x_{\iota_0+1}\notin\upE(H_{2i})$ and 
$y_{\iota_0} y_{\iota_0+1}\notin\upE(H_{2i})$. In particular, each of  
$x_{\iota_0} x_{\iota_0+1}$ and $y_{\iota_0} y_{\iota_0+1}$ lies in 
exactly $\frac{r}{2}-1$ (supports of) summands of $\Sigma$. In particular, if 
$r\equiv 0\ (\mathrm{mod}\ 4)$, then both 
$x_{\iota_0} x_{\iota_0+1}$ and $y_{\iota_0} y_{\iota_0+1}$ lie in an 
odd number of supports, and if $r\equiv 2\ (\mathrm{mod}\ 4)$, they both lie in an 
even number of supports. 
Proof of Claim~3: Retrace the steps of the proof of Claim~2. Now the equations 
$x_{2i+r-1} x_{2i} = x_{\iota_0} x_{\iota_0+1}$ and 
$y_{2i+r-1} y_{2i} = y_{\iota_0} y_{\iota_0+1}$ do have a (unique) solution 
$i = \tfrac{\iota_0+1}{2}$ and this fact is responsible 
for the exceptional case mentioned in the claim. Since again the other statements 
are merely specializations of the first, this proves Claim~3.

The motivation for formulating the following statements is that the summands on the 
right-hand sides have mutually disjoint supports. 

\textit{Claim~4.} For every even $r\in \Z_{\geq 3}$, 
if $r\equiv 0\ (\mathrm{mod}\ 4)$, then 
$\Sigma$ $=$ $\sum_{0\leq i \leq \frac{r}{2}-1} \upc_{\upC_{4,2i+1}}$, and if 
$r\equiv 2\ (\mathrm{mod}\ 4)$, then 
$\Sigma$ $=$ $\sum_{0\leq i \leq \frac{r}{2}-1} \upc_{\upC_{4,2i}}$.
Proof of Claim~4. It is evident from the definition of $\upC_{4,\iota}$ that 
the $\{ \upC_{4,2i+1}\colon 0\leq i \leq \frac{r}{2}-1\}$ have pairwise disjoint 
supports. Therefore the support of $\sum_{0\leq i \leq \frac{r}{2}-1} \upc_{\upC_{4,2i+1}}$ is 
the (disjoint) union of the supports of the $\upC_{4,2i+1}$. Analogously for 
$\sum_{0\leq i \leq \frac{r}{2}-1} \upc_{\upC_{4,2i}}$. Therefore, directly from the 
definition of $\upC_{4,\tilde{\imath}}$ in 
\eqref{specificationofallnonseparatinginducedcircuitsinPr} it follows that 
$\Supp \bigl ( \sum_{0\leq i \leq \frac{r}{2}-1} \upc_{\upC_{4,2i}} \bigr ) = S_{\mathrm{e}}$ and 
$\Supp \bigl( \sum_{0\leq i \leq \frac{r}{2}-1} \upc_{\upC_{4,2i+1}} \bigr ) = S_{\mathrm{o}}$ 
where
\begin{align}\label{proofOfClaim4supportOfSum}
S_{\mathrm{e}} & := \bigsqcup_{0\leq i \leq r-1} \{ x_i y_i \} \sqcup 
\bigsqcup_{0\leq i \leq \frac{r}{2}-1} \{ x_{2i+1} x_{2i+2},\ y_{2i+1} y_{2i+2}\}\quad ,\notag \\
S_{\mathrm{o}} & := \bigsqcup_{0\leq i \leq r-1} \{ x_i y_i\} \sqcup 
\bigsqcup_{0\leq i \leq \frac{r}{2}-1} \{ x_{2i} x_{2i+1},\  y_{2i} y_{2i+1}\} \quad .
\end{align}
Hence Claim~4 is equivalent to the statement that 
{\scriptsize
\begin{enumerate}[label={\rm(\arabic{*})}] 
\item\label{proofofexplicitrealizations:implication01} 
if $r\equiv 0\ (\mathrm{mod}\ 4)$, then 
$\Supp \bigl ( \sum_{0\leq i \leq\frac{r}{2}-1} \upc_{H_{2i}} \bigr ) = S_{\mathrm{e}}$\quad ,
\item\label{proofofexplicitrealizations:implication02} 
if $r\equiv 2\ (\mathrm{mod}\ 4)$, then 
$\Supp \bigl (  \sum_{0\leq i \leq\frac{r}{2}-1} \upc_{H_{2i}} \bigr ) = S_{\mathrm{o}}$\quad .
\end{enumerate}
}
That this is so can be deduced from the claims above as follows: 
As to \ref{proofofexplicitrealizations:implication01}, let $r\in\Z_{\geq 3}$ with 
$r\equiv 0\ (\mathrm{mod}\ 4)$. 
To prove $\supseteq$ in \ref{proofofexplicitrealizations:implication01}, note that 
by Claim~1 every $x_{\iota_0} y_{\iota_0}$ with $\iota_0 \in [0, r-1]$ is in 
exactly one of $\{ H_{2i}\colon 0\leq i \leq \frac{r}{2}-1 \}$, hence 
$x_{\iota_0} y_{\iota_0}$ $\in$ $\Supp\bigl(\sum_{0\leq j \leq\frac{r}{2}-1}\upc_{H_{2j}}\bigr)$. 
Moreover, Claim~3 invoked with $\iota_0 := 2i+1$ for $i \in [0,\frac{r}{2}-1]$ 
guarantees that for every $i \in [0,\frac{r}{2}-1]$ both $x_{2i+1} x_{2i+2}$ and 
$y_{2i+1} y_{2i+2}$ are in an odd number of 
$\{ H_{2i} \colon 0\leq i \leq \frac{r}{2}-1 \}$, hence 
$\{ x_{2i+1} x_{2i+2},\ y_{2i+1} y_{2i+2},\ \}$ $\subseteq$ 
$\Supp \bigl ( \sum_{0\leq i \leq\frac{r}{2}-1} \upc_{H_{2i}} \bigr )$ for every 
$i \in [0,\frac{r}{2}-1]$. This proves $\supseteq$ 
in \ref{proofofexplicitrealizations:implication01}. 
To prove $\subseteq$ in \ref{proofofexplicitrealizations:implication01}, we prove 
$\upE(\Pr_r)\setminus S_{\upe} \subseteq 
\upE(\Pr_r)\setminus \Supp \bigl (\sum_{0\leq j \leq\frac{r}{2}-1} \upc_{H_{2j}} \bigr )$. 
By definition of $S_{\upe}$ we have 
$\upE(\Pr_r)\setminus S_{\upe}$ $=$ 
$\{ x_{2i} x_{2i+1} \colon 0\leq i \leq \frac{r}{2}-1 \bigr \}$ $\sqcup$ 
$\{ y_{2i} y_{2i+1} \colon 0\leq i \leq \frac{r}{2}-1 \bigr \}$, so these 
are the types of edges whose inclusion in 
$\upE(\Pr_r)\setminus \Supp \bigl (\sum_{0\leq j \leq\frac{r}{2}-1} \upc_{H_{2j}} \bigr )$ 
we have to justify. Invoking Claim~2 successively with $\iota_0:=2i$ for 
$i \in [0, \frac{r}{2}-1]$ it follows that both $x_{2i} x_{2i+1}$ and 
$y_{2i} y_{2i+1}$ lie in an even number of 
$\{H_{2i} \colon 0\leq i \leq \frac{r}{2}-1\}$, hence both are missing from 
$\Supp \bigl ( \sum_{0\leq j \leq\frac{r}{2}-1} \upc_{H_{2j}} \bigr )$. Since this accounts 
for all the above mentioned types of edges in $\upE(\Pr_r)\setminus S_{\upe}$, 
we have proved $\subseteq$ in \ref{proofofexplicitrealizations:implication01}. 
This completes the proof of \ref{proofofexplicitrealizations:implication01}. 

As to \ref{proofofexplicitrealizations:implication02}, 
let $r\in\Z_{\geq 3}$ with $r\equiv 2\ (\mathrm{mod}\ 4)$. To prove $\supseteq$ 
in \ref{proofofexplicitrealizations:implication02}, note that Claim~1 guarantees 
that every $x_{\iota_0} y_{\iota_0}$ with $\iota_0\in [0, r-1]$ is in exactly 
one of $\{H_{2i}\colon 0\leq i \leq \frac{r}{2}-1 \}$, hence
$x_{\iota_0} y_{\iota_0}\in \Supp\bigl (\sum_{0\leq j \leq\frac{r}{2}-1} \upc_{H_{2j}} \bigr )$, 
too. Moreover, Claim~2 invoked successively with $\iota_0:=2i$ 
for $i\in [0, \frac{r}{2}-1]$ now guarantees that both $x_{2i} x_{2i+1}$ and 
$y_{2i} y_{2i+1}$ lie in an odd number 
of $\{ H_{2i} \colon 0\leq i \leq \frac{r}{2}-1 \}$, hence 
$\{ x_{2i} x_{2i+1},\ y_{2i}, y_{2i+1} \} \subseteq 
\Supp \bigl ( \sum_{0\leq j \leq \frac{r}{2}-1} \upc_{H_{2j}} \bigr )$ for every 
$i\in [0, \frac{r}{2}-1]$, proving $\supseteq$ 
in \ref{proofofexplicitrealizations:implication02}. In order to prove 
$\subseteq$ in \ref{proofofexplicitrealizations:implication02} we again resort to 
proving the equivalent inclusion 
$\upE(\Pr_r)\setminus S_{\upo} \subseteq \upE(\Pr_r)\setminus \Supp 
\bigl (\sum_{0\leq j \leq\frac{r}{2}-1} \upc_{H_{2j}} \bigr )$. The definition of $S_{\upo}$ 
shows that $\upE(\Pr_r)\setminus S_{\upo}$ $=$ 
$\{ x_{2i+1} x_{2i+2} \colon 0\leq i \leq\frac{r}{2}-1 \}$ $\sqcup$ 
$\{ y_{2i+1} y_{2i+2} \colon 0\leq i \leq\frac{r}{2}-1 \}$. Appealing 
to Claim~3 with  $\iota_0:=2i+1$ for every $i \in [0,\frac{r}{2}-1]$ we deduce that 
both $x_{2i+1} x_{2i+2}$ and $y_{2i+1} y_{2i+2}$ lie in an even number of the 
$\{ H_{2i} \colon 0\leq i \leq \frac{r}{2}-1 \}$, hence both are 
missing from $\Supp \bigl ( \sum_{0\leq j \leq \frac{r}{2}-1} \upc_{H_{2j}} \bigr )$. 
This accounts for every edge in $\upE(\Pr_r)\setminus S_{\upo}$ and therefore  
$\upE(\Pr_r)\setminus S_{\upo} \subseteq \upE(\Pr_r)\setminus \Supp 
\bigl (\sum_{0\leq j \leq\frac{r}{2}-1} \upc_{H_{2j}} \bigr )$ is true. This completes the 
proof of $\subseteq$ in \ref{proofofexplicitrealizations:implication02}. The proof 
of \ref{proofofexplicitrealizations:implication02} is now complete, as is the proof 
of Claim~4 as a whole. 

\textit{Claim~5:} For every even $r\in \Z_{\geq 3}$ we have 
$\{ x_i y_i \colon 0\leq i \leq r-1 \} \cap 
\Supp\bigl(\upc_{H_{\upw,1}} + \Sigma\bigr ) = \emptyset$  
and $\{ x_i y_i \colon 0\leq i \leq r-1 \bigr \} \cap 
\Supp\bigl (\upc_{H_{\upw,2}} + \Sigma \bigr ) = \emptyset$. 
Proof of Claim~5: By definition of $H_{\upw,1}$ and $H_{\upw,2}$, both 
$\Supp (\upc_{H_{\upw,1}})$ and $\Supp (\upc_{H_{\upw,2}})$ contain each of the edges 
$\{ x_iy_i\colon 0\leq i \leq r-1 \}$. It follows from the mutual 
edge-disjointness of the (supports of) the summands on the right-hand sides of 
the sums in Claim~4 that $\Supp(\Sigma)$ for both values of $r\ \mathrm{mod}\ 4$ 
contains all of these edges, too, and this proves the emptyness of the intersections 
in Claim~5.

We finally prove equations \eqref{repOfCr1andCr2ifrcongruent0mod4} and 
\eqref{repOfCr1andCr2ifrcongruent2mod4}. First note that 
Lemma~\ref{nonseparatinginducedcircuitsinPrExpressedAsSumsOfHamCircuitsIfrIsEven} 
demands $r$ to be even from the outset, hence all appeals to the claims above 
(all require even $r$) are valid. 

As to \eqref{repOfCr1andCr2ifrcongruent0mod4}, assume that 
$r\equiv 0\ (\mathrm{mod}\ 4)$, hence $\frac{r}{2}$ is even.  We first prove 
$\Supp(\upc_{\upC_{r,1}}) = \Supp(\upc_{H_{\upw,1}} + \Sigma)$. We begin 
with $\Supp(\upc_{\upC_{r,1}}) \subseteq \Supp(\upc_{H_{\upw,1}} + \Sigma)$. In 
$\upE(\upC_{r,1})$, there are only edges of the form $x_ix_{i+1}$. Of these, 
we distinguish the types of edges $x_ix_{i+1}$ with even $i$ from those 
with odd $i$ and argue as follows: for every \emph{even} $i\in [0, r-1]$, we know 
by Claim~2 that $x_i x_{i+1}$ lies in an even number of 
$\{ H_{2i}\colon 0\leq i \leq \frac{r}{2}-1 \}$, hence 
$x_i x_{i+1} \notin \Supp(\Sigma)$, while directly from the definition 
of $H_{\upw,1}$ we see that $x_i x_{i+1} \in \upE(H_{\upw,1})$, hence 
$x_i x_{i+1} \in \Supp(\upc_{H_{\upw,1}} + \Sigma)$. For every \emph{odd} $i\in [0,r-1]$, 
we know by Claim~3 that $x_i x_{i+1}$ lies in an odd number 
of $\{ H_{2i} \colon 0\leq i \leq \frac{r}{2}-1 \}$, hence 
$x_i x_{i+1} \in \Supp(\Sigma)$, while directly from the definition 
of $H_{\upw,1}$ we see that $x_i x_{i+1} \notin \upE(H_{\upw,1})$, hence again 
$x_i x_{i+1} \in \Supp(\upc_{H_{\upw,1}} + \Sigma)$. Since now all edges of 
$\upC_{r,1}$ have been found to lie in $\Supp(\upc_{H_{\upw,1}} + \Sigma)$, this 
proves $\Supp(\upc_{\upC_{r,1}}) \subseteq \Supp(\upc_{H_{\upw,1}} + \Sigma)$. 

We now prove $\Supp(\upc_{\upC_{r,1}}) \supseteq \Supp(\upc_{H_{\upw,1}} + \Sigma)$, yet 
again by proving the equivalent inclusion 
$\upE(\Pr_r)\setminus \Supp(\upc_{\upC_{r,1}}) \subseteq 
\upE(\Pr_r)\setminus\Supp(\upc_{H_{\upw,1}} + \Sigma)$. The only types of edges in 
$\upE(\Pr_r)\setminus \Supp(\upc_{\upC_{r,1}})$ are $y_i y_{i+1}$ and $x_i y_i$. As to 
the former, to justify why $y_i y_{i+1}\notin \Supp(\upc_{H_{\upw,1}} + \Sigma)$ for 
every $i\in [0,r-1]$, we may repeat the preceding paragraph verbatim except for 
interchanging $x$ and $y$ and changing 
`$x_i x_{i+1}\in\upE(H_{\upw,1})$' to `$y_i y_{i+1} \notin\upE(H_{\upw,1})$' 
and 
`$x_i x_{i+1} \notin\upE(H_{\upw,1})$' to `$y_i y_{i+1} \in\upE(H_{\upw,1})$' to 
find the parities work out as they should. As to the type $x_i y_i$, note 
that Claim~5 gives just what we need, namely 
$x_i y_i\notin \Supp(\upc_{H_{\upw,1}} + \Sigma)$ for every $i\in [0,r-1]$. 
The proof of $\Supp(\upc_{\upC_{r,1}}) \supseteq \Supp(\upc_{H_{\upw,1}} + \Sigma)$ is 
now complete, as is the proof of 
$\Supp(\upc_{\upC_{r,1}}) = \Supp(\upc_{H_{\upw,1}} + \Sigma)$.

To justify $\Supp(\upc_{\upC_{r,2}}) = \Supp(\upc_{H_{\upw,2}} + \Sigma)$ 
in \eqref{repOfCr1andCr2ifrcongruent0mod4} it suffices to change 
`$\upC_{r,1}$' into `$\upC_{r,2}$', `$H_{\upw,1}$' into `$H_{\upw,2}$' 
and `$x$' into `$y$' in the preceding two paragraphs. This completes the proof 
of \eqref{repOfCr1andCr2ifrcongruent0mod4}. 

As to \eqref{repOfCr1andCr2ifrcongruent2mod4}, assume that 
$r\equiv 2\ (\mathrm{mod}\ 4)$, hence $\frac{r}{2}$ is odd (which affects what 
Claims 2 and 3 will tell us about the parities of the number of containing 
supports). A proof for \eqref{repOfCr1andCr2ifrcongruent2mod4} can now be 
obtained by making obvious  modifications in the preceding three paragraphs. 
The proof of 
\ref{nonseparatinginducedcircuitsinPrExpressedAsSumsOfHamCircuitsIfrIsEven} 
is now complete.  
\end{proof}

\section{Concluding Remarks}\label{sec:concludingremarks}

\subsection{Two open questions and the state of contemporary knowledge}\label{7yt6r5657766tt6t66tt6t6} The formulation of Theorem~\ref{thm:mainresults} suggests further 
improvements (e.g. eliminating the lower bound on $f_0$, proving non-asymptotic 
minimum-degree thresholds, and finding an infinite set of counter-examples disproving 
the weakened implications for \emph{every} $f_0$, instead of only 
for $f_0=7$ and $f_0=12$ as was done in 
the sections \ref{78676r55r5655566567878888u} and \ref{7y6t65r44r5676756565} above).

In particular, the author does not know whether the threshold 
in \ref{thm:minDegOneHalfPlusGammaImpliesHamiltonGeneratedForOddOrder} 
can be lowered to the Dirac threshold itself. Two noteworthy open questions in 
that regard are: 

\begin{enumerate}[label={\rm(Q\arabic{*})}] 
\item\label{question:doesdiracthresholdalreadyimplyhamiltongenerated}
Let $X$  be a graph with $f_0(X)$ odd and 
$\delta(X)\geq \tfrac12 f_0(X)$. Does it follow that its cycle 
space is generated by its Hamilton circuits\quad ? 
\item\label{question:doesoneabovediracthresholdimplyguaranteeBCrasaspanningcopy}
Let $X$ be square bipartite with $\delta(X)\geq \tfrac14 f_0(X) + 1$. Does 
it follow that $\CL_{\frac12 f_0(X)} \hookrightarrow X$\quad ? 
\end{enumerate}

If \ref{question:doesoneabovediracthresholdimplyguaranteeBCrasaspanningcopy} has a 
positive answer then by arguments entirely analogous to those that were used to 
prove Theorem \ref{thm:mainresults}, it would follow immediately that 
\ref{thm:minDegOneFourthPlusGammaImpliesHamiltonGeneratednessInBipartiteGraphs} 
in Theorem~\ref{thm:mainresults} remains true when 
`$\delta(X)\geq (\tfrac14  + \gamma )f_0(X)$' is replaced by 
`$\delta(X)\geq \tfrac14 f_0(X) + 1$'. There is a theorem of 
A.~Czygrinow and H.~A.~Kierstead  \cite[Theorem~1]{MR1935733} which proves that 
if $X$ is  a sufficiently large square bipartite graph, then 
$\delta(X)\geq\tfrac14 f_0(X) + 1$ implies that $X$ contains a spanning copy of 
the \emph{non}-cyclic ladder $\NCL_r$ (i.e. $\CL_r$ with the two edges 
$\{ a_{r-1}, b_0 \}$ and $\{a_0,b_{r-1}\}$ removed). This small defect is enough to 
render this spanning subgraph unsuitable for serving as an auxiliary substructure 
in the same way $\CL_r$ has done in the present paper: while the non-cyclic ladder 
still is Hamilton-laceable, the loss of the two edges causes a drastic drop in the 
dimension of $\langle \mathcal{H}(\cdot) \rangle_{\Z/2}$~: whereas 
$\CL_r\in\mathrm{cd}_{0}\mathcal{C}_{\{f_0(\cdot)\}}$ 
by \ref{lem:it:bipartitecyclicladder:generator}, it can be checked 
that $\NCL_r$ contains only \emph{one} Hamilton circuit, hence 
$\NCL_r\in\mathrm{cd}_{\beta_1(\NCL_r)-1}\mathcal{C}_{\{f_0(\cdot)\}}$. 

In the pursuit of 
Question~\ref{question:doesdiracthresholdalreadyimplyhamiltongenerated} one should 
be aware of the following (probably known) implication (which without requiring 
$f_0(X)$ to be odd would be false):

\begin{lemma}[above the Dirac threshold, a graph with odd order is Hamilton 
connected]\label{lem:slightlyabovdiracthresholdagraphishamiltonconnected}
Every graph $X$ with $f_0(X)$ odd and $\delta(X) \geq  \tfrac12 f_0(X)$ is 
Hamilton-connected. 
\end{lemma}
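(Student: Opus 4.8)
The plan is to deduce the statement from a classical Ore-type phenomenon, the only genuine content being the reduction of Hamilton-connectedness to ordinary hamiltonicity. First I would record the arithmetic that makes oddness essential: writing $n:=f_0(X)$, the hypothesis $\delta(X)\geq\tfrac12 n$ together with $n$ odd is equivalent to $\delta(X)\geq\tfrac{n+1}{2}$, whence $\lvert\upN_X(x)\rvert+\lvert\upN_X(y)\rvert\geq n+1$ for \emph{every} pair of distinct $x,y\in\upV(X)$, adjacent or not. For even $n$ the same bound would yield only degree-sums $n$, which is exactly the Dirac/Ore threshold for a single Hamilton circuit and too weak for connectedness; this is precisely why the oddness of $f_0(X)$ cannot be dropped.

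Next I would fix two distinct vertices $u,v$ and reduce the existence of a Hamilton $u$-$v$-path to a Hamilton-circuit problem. Let $X^{+}$ be the graph obtained from $X$ by adjoining one new vertex $w$ with $\upN_{X^+}(w)=\{u,v\}$, so that $f_0(X^+)=n+1$. Since $w$ has degree $2$, every Hamilton circuit of $X^+$ must contain both edges $wu$ and $wv$, and deleting $w$ from such a circuit leaves a path on $\upV(X)$ with endpoints $u,v$ visiting every vertex of $X$, i.e.\ a Hamilton $u$-$v$-path. Hence it suffices to prove that $X^+$ is hamiltonian.

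The core step is to show $X^+$ is hamiltonian in spite of the degree-$2$ vertex $w$, and I would do this via the Bondy--Chv\'atal closure taken with respect to the order $N:=f_0(X^+)=n+1$. Any two non-adjacent vertices lying in $\upV(X)$ already have degree-sum $\geq\tfrac{n+1}{2}+\tfrac{n+1}{2}=n+1=N$ in $X^+$, so the closure adds every non-edge inside $\upV(X)$ and thereby completes $\upV(X)$ into a clique; once this is done, each $z\in\upV(X)\setminus\{u,v\}$ has degree exactly $n-1=N-2$, which together with $\deg w=2$ again meets the threshold $N$, forcing each edge $wz$ into the closure. Thus the closure equals the complete graph $\upK^{N}$, which is hamiltonian, so by the closure theorem $X^+$ itself is hamiltonian. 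Ranging over all pairs $\{u,v\}$ then gives Hamilton-connectedness of $X$.

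The step I expect to be the main obstacle, and the only non-formal one, is exactly the hamiltonicity of $X^+$: because $w$ has degree $2$, neither Dirac's theorem, nor a direct Ore estimate, nor a Chv\'atal--Erd\H{o}s connectivity bound applies to $X^+$ off the shelf, and it is the closure argument that rescues this. It is worth noting that this argument uses the full strength of the $\tfrac{n+1}{2}$ bound at a single decisive point, namely where the degree-$(n-1)$ vertices of the completed $\upV(X)$ have \emph{just} enough degree to absorb $w$. An entirely equivalent but shorter route would be to invoke Ore's theorem that $\deg x+\deg y\geq n+1$ for all non-adjacent $x,y$ forces Hamilton-connectedness, and I would offer this as the alternative should a self-contained proof not be wanted.
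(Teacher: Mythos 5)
Your proof is correct, but it follows a genuinely different route from the paper. The paper disposes of this lemma in one stroke by citing a localization-type theorem of Asratian, Ambartsumian and Sarkisian: since $\delta(X)\geq\tfrac{n+1}{2}$ gives $\lvert\upN_X(u)\rvert+\lvert\upN_X(v)\rvert\geq n+1\geq\lvert\upN_X(u)\cup\upN_X(v)\cup\upN_X(w)\rvert+1$ for every triple, that theorem immediately yields Hamilton-connectedness. (The author even remarks that the original article was inaccessible and the statement was taken from a secondary source.) You instead reduce a Hamilton $u$-$v$-path to a Hamilton circuit of the auxiliary graph $X^+=X+w$ with $\upN_{X^+}(w)=\{u,v\}$, and then run the Bondy--Chv\'atal closure with respect to $N=n+1$: the non-adjacent pairs inside $\upV(X)$ have degree-sum $\geq n+1=N$, so the closure completes $\upV(X)$ to a clique, after which every $z\in\upV(X)\setminus\{u,v\}$ has degree $N-2$ and absorbs $w$, giving $\mathrm{cl}(X^+)=\upK^{N}$. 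All steps check out, including the parity bookkeeping that turns $\delta(X)\geq\tfrac12 n$ into $\delta(X)\geq\tfrac{n+1}{2}$ and your correct observation that this is exactly where oddness is used. What your approach buys is self-containedness: it rests only on textbook material (Bondy--Chv\'atal, or equivalently Ore's 1963 theorem that degree-sum $\geq n+1$ for all non-adjacent pairs forces Hamilton-connectedness, which you rightly offer as a one-line alternative), whereas the paper's citation is shorter but leans on a less accessible result whose hypotheses are also slightly less transparent. Either argument is a valid replacement for the other.
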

\begin{proof}
This is an immediate corollary of a theorem of 
A.~S.~Asratian, O.~A.~Ambartsumian and G.~V.~Sarkisian \cite{MR1128859}\footnote{The 
author could not access this article and takes the statement of the theorem 
from \cite[Theorem~3]{MR1296952}.} which states that every connected 
graph $X$ with $f_0(X)\geq 3$ and the property 
$\lvert \upN_X(u) \rvert + \lvert \upN_X(u) \rvert \geq 
\lvert \upN_X(u) \cup \upN_X(v) \cup \upN_X(w) \rvert + 1$ for each of those 
$(u,v,w)\in \upV(X)^3$ which have $\emptyset\neq\upN_X(u)\cap\upN_X(v)\ni w$, 
is Hamilton-connected. It is evident that the present hypothesis of 
$\delta(X)\geq  \tfrac12f_0(X)$ and odd $f_0(X)$ makes the 
assumptions of this theorem true (in fact, our hypothesis makes them true 
for every $(u,v,w)\in \upV(X)^3$).  
\end{proof}

Question~\ref{question:doesdiracthresholdalreadyimplyhamiltongenerated} seems not 
to have been explicitly asked in the literature. There is, however, the 
aforementioned Conjecture~\ref{thm:BondyConjecture}, which according 
to \cite[Reference 1]{MR821540} \cite[Reference~3]{MR1174832} dates back to 1979 
and apparently is still open. For $n:=f_0(X)=2d$, 
Conjecture~\ref{thm:BondyConjecture} asks for a generating system consisting of 
Hamilton circuits together with all circuits shorter by one. For the case of 
even $n=2d$, these additional circuits are clearly necessary, but the point of 
Question~\ref{question:doesdiracthresholdalreadyimplyhamiltongenerated} is that 
for odd $n:=2d+1$ it seems quite possible to make do solely with Hamilton circuits 
(instead of the three lengths $2d-1$, $2d$ and $2d+1=f_0(X)$ allowed by Bondy's 
conjecture), all the more so as Theorem~\ref{thm:mainresults} of the present paper 
gives an asymptotic affirmative answer 
to \ref{question:doesdiracthresholdalreadyimplyhamiltongenerated}. 
The only papers explicitly addressing Bondy's conjecture apparently are \cite{MR725072} \cite{MR821540} \cite{MR815581} \cite{MR1174832} \cite{MR1760293} \cite{MR2171084} \cite{MR2279163}. We will briefly consider each of them. 
In \cite[p.~246]{MR725072}, Conjecture~\ref{thm:BondyConjecture} is merely 
mentioned at the end as a related open conjecture. 
In \cite[Theorem~2 and Corollary~4]{MR821540} it is proved that for every 
$d\in\Z$, if $X$ is a $3$-connected graph with $\delta(X)\geq d$ which is either 
non-hamiltonian or has $f_0(X)\geq 4d-5$, then $\upZ_1(X;\Z/2)$ is generated by its 
circuits of length at least $2d-1$ (note that if $f_0(X)\geq 4d-5$, the conclusion 
in Bondy's conjecture is far from generatedness by Hamilton circuits). The 
paper \cite{MR815581} does not have the cycle space as its main concern but 
announces the results of \cite{MR821540} at the very end. 
Moreover, the concern of \cite{MR1174832} is the question if and when there are 
inclusions $\mathcal{CO}_{\mathfrak{L}'}\subseteq \mathrm{cd}_0\mathcal{C}_{\mathfrak{L}''}$ 
for different sets of lengths $\mathfrak{L}'$ and $\mathfrak{L}''$; consequently 
the paper is not concerned with minimum-degree conditions and 
Conjecture~\ref{thm:BondyConjecture} is mentioned merely in 
passing \cite[p.~77]{MR1174832}. In \cite{MR1760293} the assumption about 
non-hamiltonicity appears in a 
different role, but it can be proved that the results of \cite{MR1760293} do not 
answer \ref{question:doesdiracthresholdalreadyimplyhamiltongenerated}:

\begin{theorem}[{Barovich--Locke \cite[Theorem~2.2]{MR1760293}}]\label{BarovichLockeTheorem2.2}
Let $d\in \Z$, let $X$ be a finite hamiltonian graph, let $X$ be $3$-connected, 
$\delta(X)\geq d$ and $f_0(X) \geq 2d-1$. If $f_0(X) \in \{9,\dotsc, 4d-8\}$, and 
if there exists at least one $v\in \upV(X)$ such that $X-v$ is 
\emph{not} hamiltonian, and if another condition holds (which to spell out 
would be irrelevant here), then $\upZ_1(X;\Z/2)$ is generated by the set of all 
circuits of length at least $2d-1$.
\end{theorem}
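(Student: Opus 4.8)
The plan is to run the Tutte-based route~\ref{it:way2} already exploited in this paper: since $X$ is $3$-connected, the theorem of W.~T.~Tutte (\cite[Statement (2.5)]{MR0158387}, \cite[Theorem~3.2.3]{MR2159259}) tells us that $\upZ_1(X;\Z/2)$ is generated by the set of all non-separating induced circuits of $X$. It therefore suffices to show that the circuits of length at least $2d-1$ already generate every non-separating induced circuit; equivalently, that each non-separating induced circuit of length strictly less than $2d-1$ lies in the $\Z/2$-span of circuits of length at least $2d-1$. Those short circuits then become redundant in the generating system and may be discarded, leaving exactly the conclusion claimed.

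First I would extract a lower bound on the length of a non-separating induced circuit $C$ directly from the degree hypothesis. Being induced, each vertex lying on $C$ has exactly two neighbours on $C$ and hence at least $d-2$ neighbours in $X-C$, which is connected because $C$ is non-separating. Comparing the number of edges leaving $C$ with the size $f_0(X)-\lvert\upV(C)\rvert$ of $X-C$ gives a first, elementary lower bound on the length of $C$. The difficulty is that this naive estimate stops short of the target $2d-1$, so the non-separating induced circuits whose length falls in the intermediate band must be disposed of individually rather than by a uniform counting bound.

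This is exactly where the hamiltonicity of $X$, the existence of a vertex $v$ with $X-v$ non-hamiltonian, the suppressed side condition, and the numerical window $f_0(X)\in\{9,\dotsc,4d-8\}$ all enter. The window is complementary to the regime $f_0(X)\geq 4d-5$ in which Locke's earlier theorem \cite[Theorem~2 and Corollary~4]{MR821540} already forces the conclusion, so the argument must genuinely use the smallness of $f_0(X)$ relative to $d$. I would feed the non-hamiltonicity of $X-v$ into the non-hamiltonian case of \cite[Theorem~2 and Corollary~4]{MR821540}, applied to $X-v$ (or to a structure derived from it), to manufacture long circuits there, and then recombine these with a fixed Hamilton circuit of $X$ (available since $X$ is hamiltonian) in the spirit of the monotonicity/lifting mechanism of \cite[Lemma~1]{MR818599}. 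The aim of each such recombination is to realise a given short non-separating induced circuit as a symmetric difference of circuits all of length at least $2d-1$.

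The main obstacle will be precisely these short non-separating induced circuits---those whose length lies strictly between the elementary bound of the second paragraph and $2d-1$. Writing every one of them as a $\Z/2$-combination of long circuits, and organising the case analysis so that no such circuit in the range $f_0(X)\in\{9,\dotsc,4d-8\}$ is overlooked, is the technically demanding heart of the proof; the extra hypotheses (including the omitted condition) are the leverage that makes the rewritings possible, and confirming that they account for \emph{all} short circuits is where the bulk of the labour lies.
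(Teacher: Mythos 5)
There is nothing in the paper to compare your attempt against: Theorem~\ref{BarovichLockeTheorem2.2} is not proved in this paper at all. It is an external result, quoted verbatim from Barovich--Locke \cite[Theorem~2.2]{MR1760293}, and it appears only in the concluding remarks, where it is used to argue that the existing literature does \emph{not} settle Question~\ref{question:doesdiracthresholdalreadyimplyhamiltongenerated}. Note moreover that the statement as printed deliberately suppresses one of its hypotheses (``if another condition holds (which to spell out would be irrelevant here)''), so no self-contained proof of the statement \emph{as given} is even possible --- any correct argument would have to identify and use that missing condition, which you cannot do without consulting \cite{MR1760293} itself.

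Independently of that, what you have written is a plan rather than a proof, and the gap is exactly where you locate it yourself. The reduction via Tutte's theorem to non-separating induced circuits is legitimate for a $3$-connected graph, and your observation that the naive degree count (each vertex of an induced non-separating circuit sends at least $d-2$ edges into the connected remainder) falls short of the threshold $2d-1$ is correct. But the entire content of the theorem lies in disposing of the non-separating induced circuits whose length sits in that intermediate band, and your third and fourth paragraphs only gesture at how this might be done (``I would feed \dots'', ``recombine \dots in the spirit of \dots''). No short circuit is actually rewritten as a $\Z/2$-sum of circuits of length at least $2d-1$; the hypotheses about hamiltonicity of $X$, non-hamiltonicity of $X-v$, and the window $f_0(X)\in\{9,\dotsc,4d-8\}$ are invoked by name but never actually used in a verifiable step. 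As it stands, the proposal establishes nothing beyond the (standard) reduction to non-separating induced circuits; the ``technically demanding heart'' you defer is the whole theorem.
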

The point to be made is that if $f_0(X)$ is odd and 
$\delta(X)\geq \lceil \tfrac{ f_0(X)}{2} \rceil$, and if the theorem of 
Barovich--Locke is to yield generatedness by Hamilton circuits, then necessarily 
we must set $2d-1 = f_0(X)$. While this automatically makes the hypothesis 
$f_0(X) \in \{9,\dotsc, 4d-8\}$ true, and while 
$\delta(X)\geq \lceil \tfrac{f_0(X)}{2} \rceil$ ensures (by Dirac's theorem) 
that $X$ is hamiltonian and also that $X$ is $3$-connected, the remaining 
hypothesis of Theorem~\ref{BarovichLockeTheorem2.2} above 
\emph{cannot possibly be true} in the setting of 
Question~\ref{question:doesdiracthresholdalreadyimplyhamiltongenerated}: 
for every $v\in \upV(X)$ we have 
$\delta(X-v)$ $\geq$ $\delta(X) - 1$ $\geq$ (since $\delta(X)$ is an integer) 
$\geq$ $\lceil \tfrac12 f_0(X) \rceil - 1$ $=$ 
$\tfrac{f_0(X)}{2} - \tfrac12$ $=$ $\frac12 f_0(X-v)$, hence $X-v$ is still 
hamiltonian by Dirac's theorem. Hence 
Theorem~\ref{BarovichLockeTheorem2.2}, as it stands, does not answer 
Question~\ref{question:doesdiracthresholdalreadyimplyhamiltongenerated}. 
Furthermore, in \cite{MR2171084} the phrase ``in the presence of a long cycle 
every $k$-path-connected graph is $(k+1)$-generated'' \cite[Introduction, last 
paragraph]{MR2171084} cannot be construed so as to answer Question~\ref{question:doesdiracthresholdalreadyimplyhamiltongenerated}: each 
of the slightly different ways in which this phrase is made precise by the 
authors (cf. \cite[Corollary~5, Lemmas~9~and~10]{MR2171084}) involves additional 
assumptions one of which always is that there exists a circuit of 
length $2k-2$ or $2k-3$. The existence of such a circuit implies 
that `$(k+1)$-generated' is far from meaning `generated by Hamilton circuits'. 
Finally, \cite{MR2279163} is concerned with the same type of 
question as \cite{MR1174832} and Conjecure~\ref{thm:BondyConjecture} is again only 
mentioned in passing \cite[p.~12]{MR2279163}. 

\subsection{A positive example for 
Question~\ref{question:doesdiracthresholdalreadyimplyhamiltongenerated}}\label{87y7y6r5r5f5f5} 

We will now analyse a small yet relevant example which is a positive instance 
for Question~\ref{question:doesdiracthresholdalreadyimplyhamiltongenerated}. It 
provides an explicit illustration for how a minimum degree just barely 
satisfying the Dirac threshold can endow a \emph{non}-Cayley graph with the 
property of having its cycle space generated by its Hamilton circuits. 

\begin{definition}[{The graph $\upX$; this is the graph underlying 
Figure~\ref{7yt6yr52re34e34e4}.}]
Let $\upX$ be the graph defined by $\upV(\upX):=\{v_1,\dotsc,v_7\}$ 
and $\upE(\upX) := \bigl \{$ 
$v_1v_2,$ $v_1v_3,$ $v_1v_6,$ $v_1v_7,$ 
$v_2v_3,$ $v_2v_6,$ $v_2v_7,$ $v_3v_4,$ $v_3v_5,$ 
$v_4v_5,$ $v_4v_6,$ $v_4v_7,$ $v_5v_6,$ $v_5v_7$ $\bigr \}$. 
\end{definition}

Obviously $\upX$ satisfies the hypotheses 
in Question~\ref{question:doesdiracthresholdalreadyimplyhamiltongenerated} 
(but only barely so), and $\dim_{\Z/2}(\upX;\Z/2)$ $=$ $\beta_1(\upX)$ $=$ 
$f_1(\upX)$$-$ $f_0(\upX)$ $+$ $1$ $=$ $14-7+1$  $=$ $8$. 
Furthermore, because of the following fact we cannot prove that $\upX$ is a 
positive instance for 
Question~\ref{question:doesdiracthresholdalreadyimplyhamiltongenerated} just by  
appealing to 
Theorem~\ref{thm:AlspachLockeWitte}.\ref{thm:alspachlockewitte:numberofverticesodd}:
\begin{proposition}
The graph $\upX$ is not a Cayley graph.
\end{proposition}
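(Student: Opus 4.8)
The plan is to prove that $\upX$ is not a Cayley graph by exploiting the standard fact that every Cayley graph is vertex-transitive, so it suffices to show that $\Aut(\upX)$ does \emph{not} act transitively on $\upV(\upX)$. First I would compute the degree sequence of $\upX$ directly from the edge list: reading off incidences, vertices $v_1,v_2,v_4,v_5$ each have degree $4$, while $v_3,v_6,v_7$ each have degree $3$. Since any graph automorphism preserves vertex degrees, no automorphism can send a degree-$4$ vertex to a degree-$3$ vertex; in particular the orbit of $v_1$ under $\Aut(\upX)$ is contained in $\{v_1,v_2,v_4,v_5\}$ and cannot reach $v_3$. Hence $\Aut(\upX)$ has at least two orbits on $\upV(\upX)$, so $\upX$ is not vertex-transitive.

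Second, I would invoke the classical implication that a Cayley graph $\Cay(\langle S\rangle;S)$ on a group is vertex-transitive, the left-translations $x\mapsto gx$ furnishing for every ordered pair of vertices an automorphism carrying one to the other. The one subtlety worth a sentence is that one should use the genuinely group-theoretic Cayley-graph definition (as in Section~\ref{subsubsection:explanationofthestepdealingwiththeauxiliarystructures}) rather than merely a ``circulant''-type description, but the vertex-transitivity argument applies verbatim to any Cayley graph on any group. Combining this with the previous paragraph, if $\upX$ were a Cayley graph it would be vertex-transitive, contradicting the fact that its degree sequence is not constant.

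I expect there to be essentially \emph{no} hard step here: the entire argument reduces to the observation that $\upX$ is not regular together with the elementary fact that Cayley graphs are regular (indeed vertex-transitive). The only place demanding a modicum of care is the bookkeeping of the degrees, which I would carry out once and for all by tallying each vertex's incidences in $\upE(\upX)$; since $f_1(\upX)=14$ and the degree sum is $28=4\cdot 4+3\cdot 3$, the count $4,4,3,4,4,3,3$ is internally consistent. With the degree sequence pinned down, the proof is immediate, and I would phrase it as: \emph{$\upX$ is not regular, but every Cayley graph is regular (being vertex-transitive), hence $\upX$ is not a Cayley graph.}
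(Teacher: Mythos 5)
There is a genuine and fatal gap: your degree count is wrong, and with it the entire argument collapses. Reading the edge list of $\upX$, the vertex $v_3$ is incident with $v_1v_3$, $v_2v_3$, $v_3v_4$, $v_3v_5$, hence has degree $4$; likewise $v_6$ lies on $v_1v_6$, $v_2v_6$, $v_4v_6$, $v_5v_6$ and $v_7$ on $v_1v_7$, $v_2v_7$, $v_4v_7$, $v_5v_7$, so both have degree $4$ as well. The graph $\upX$ is in fact $4$-regular (consistently with $2f_1(\upX)=28=7\cdot 4$, and with the paper's description of the candidate as a \emph{quartic} Cayley graph whose adjacency spectrum has largest eigenvalue $4$). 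Your own consistency check already signals the error: $4\cdot 4+3\cdot 3=25\neq 28$, so a degree sequence $4,4,3,4,4,3,3$ cannot account for $14$ edges. Since $\upX$ is regular, no argument via non-regularity or non-vertex-transitivity of the degree type can succeed, and indeed the obstruction to being a Cayley graph must be sought elsewhere.

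The paper's proof is necessarily more substantial. Because $f_0(\upX)=7$ is prime, the only group of order $7$ is $\Z/7$, so the only candidate realization is as a connected quartic circulant on an abelian group. The adjacency spectrum of $\upX$ is computed to be $(4,1,-1,-1,0,0,-3)$, which is integral, and a classification theorem of Abdollahi and Vatandoost for integral quartic Cayley graphs on abelian groups shows that no such graph has order $7$; this contradiction finishes the proof. If you want to repair your attempt along elementary lines, you would have to show directly that $\upX$ is not isomorphic to any connected $4$-regular circulant on $\Z/7$ (all of which are isomorphic to $\upC_7^2$ via multiplier automorphisms), for instance by comparing triangle counts or spectra; but the bare observation ``not regular'' is simply false for this graph.
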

\begin{proof}
The order $f_0(\upX)=7$ being prime, 
the only possible underlying group is $\Z/7$ with addition. Now suppose that 
$\upX$ were a Cayley graph on $\Z/7$. Since the spectrum of the adjacency matrix 
of $\upX$ is $(4,1,-1,-1,0,0,-3)\in\Z^7$, the graph $\upX$ would then be a 
quartic connected Cayley graph on an abelian group having only integer 
adjacency-eigenvalues. But this would contradict a classification theorem due to 
A.~Abdollahi and E.~Vatandoost \cite[Theorem~1.1]{ABDOLLAHIVATANDOOST_integralquarticcayleygraphsonabeliangroups} according to which the set of all orders of 
such graphs is a finite set which does not contain $7$. 
\end{proof}

\begin{figure} 
\begin{center}
\input{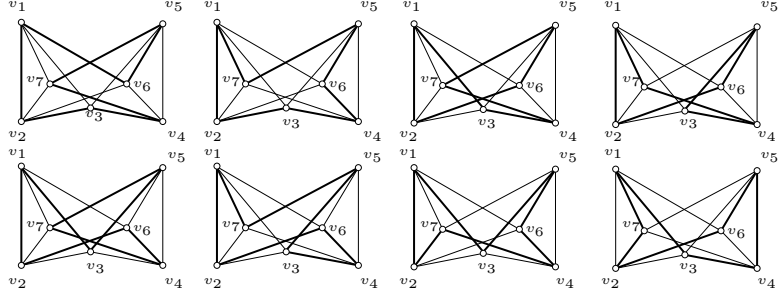}
\caption{An example of a $\Z/2$-basis for $\upZ_1(\upX;\Z/2)$ consisting only of 
Hamilton circuits in a situation where the underlying graph $\upX$ is not a 
Cayley graph and presumably owes its being Hamilton-generated to the 
Dirac condition (which it satisfies just barely).}
\label{7yt6yr52re34e34e4}
\end{center}
\end{figure}

\begin{figure} 
\begin{center}
\input{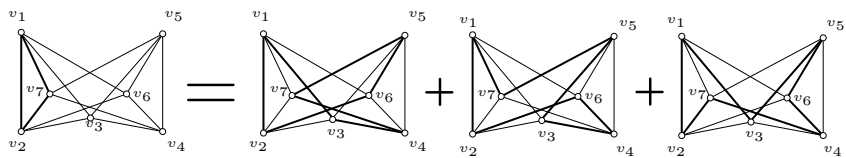}
\caption{An example of a realization of a $3$-circuit in terms of the 
Hamilton circuit basis from Figure~\ref{7yt6yr52re34e34e4}.}
\label{3d23sd35f6gh6h7}
\end{center}
\end{figure}
\begin{proposition}[{$\upX$ is Hamilton-generated}]
\label{125412412451327873483214687}
$\bigl \langle \mathcal{H}(\upX) \bigr\rangle_{\Z/2} = \upZ_1(\upX;\Z/2)$. 
\end{proposition}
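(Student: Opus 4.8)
The plan is to follow strategy \ref{it:way3}, exactly as was done in the proof of Proposition~\ref{3541567456216753254}, but now aiming at \emph{full} rank rather than a rank deficiency. Since $\langle\mathcal{H}(\upX)\rangle_{\Z/2}$ is by definition a $\Z/2$-linear subspace of $\upZ_1(\upX;\Z/2)$, and since $\dim_{\Z/2}\upZ_1(\upX;\Z/2)=\beta_1(\upX)=8$, it suffices to exhibit eight Hamilton circuits of $\upX$ whose characteristic vectors are $\Z/2$-linearly independent: a linearly independent subset of cardinality equal to the dimension of the ambient space is automatically a generating system, whence $\langle\mathcal{H}(\upX)\rangle_{\Z/2}=\upZ_1(\upX;\Z/2)$.

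First I would fix the standard basis of $\upC_1(\upX;\Z/2)$ by labelling the fourteen edges, e.g. $\upe_1:=\upc_{v_1v_2}$, $\upe_2:=\upc_{v_1v_3}$, and so on through $\upe_{14}:=\upc_{v_5v_7}$, in analogy with the labelling used in \eqref{23786324874872117204}. Next I would read off the eight Hamilton circuits already displayed in Figure~\ref{7yt6yr52re34e34e4} (first verifying in passing that each is a genuine Hamilton circuit, i.e. a spanning $2$-regular connected subgraph all of whose edges lie in $\upE(\upX)$), and record the support $\upE(C)\subseteq\upE(\upX)$ of each as a column vector in $(\Z/2)^{14}$. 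This yields a $14\times 8$ zero-one matrix $M$. Finally I would perform Gaussian elimination over $\Z/2$ on $M$ and check that $\rk_{\Z/2}(M)=8$; equivalently, one may point to a nonsingular $(8\times 8)$-minor, just as the single nonsingular minors in \eqref{875633254768786664321}--\eqref{89887659878768767} sufficed to establish the linear-independence claims \ref{38794320986754435465687}.

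The only real work is the rank computation, and it is entirely mechanical: the eight circuits in Figure~\ref{7yt6yr52re34e34e4} have been chosen precisely so that $M$ has rank $8$, so the main (minor) obstacle is simply the bookkeeping of translating the pictured circuits into their edge-supports without error and then carrying out the row reduction correctly. Once $\rk_{\Z/2}(M)=8$ is confirmed, the dimension argument of the first paragraph closes the proof; Figure~\ref{3d23sd35f6gh6h7}, which expresses one short circuit in terms of this Hamilton-circuit basis, serves as an independent sanity check that the chosen circuits really do span all of $\upZ_1(\upX;\Z/2)$. Note that, in contrast to Proposition~\ref{3541567456216753254}, no appeal to Theorem~\ref{thm:AlspachLockeWitte} is available here, since $\upX$ is not a Cayley graph; the explicit basis is what makes the argument go through.
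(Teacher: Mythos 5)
Your proposal is correct and is essentially identical to the paper's own proof: the paper likewise takes the eight Hamilton circuits $C_1^{\upX},\dotsc,C_8^{\upX}$ of Figure~\ref{7yt6yr52re34e34e4}, records their characteristic vectors as the $14\times 8$ matrix \eqref{dr4e3s32ws344d45f24234234324}, verifies that its $\Z/2$-rank is $8=\dim_{\Z/2}\upZ_1(\upX;\Z/2)$, and concludes by the same dimension argument. The only cosmetic difference is that the paper asserts the rank directly rather than spelling out the Gaussian elimination or exhibiting a nonsingular minor.
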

\begin{proof}
Let us explicitly give a $\Z/2$-basis (shown in Figure~\ref{7yt6yr52re34e34e4}) for 
$\upZ_1(\upX;\Z/2)$ consisting of Hamilton circuits only (there is no particular 
reason why we choose this basis among several others). 
Let $C_1^{\upX} := v_1v_2v_3v_4v_7v_5v_6v_1$, 
$C_2^{\upX} :=  v_1v_2v_3v_4v_6v_5v_7v_1$, 
$C_3^{\upX} :=  v_1v_2v_6v_5v_7v_4v_3v_1$, 
$C_4^{\upX} :=  v_1v_2v_6v_5v_3v_4v_7v_1$, 
$C_5^{\upX} :=  v_1v_2v_6v_4v_7v_5v_3v_1$, 
$C_6^{\upX} :=  v_1v_2v_6v_4v_3v_5v_7v_1$, 
$C_7^{\upX} :=  v_1v_2v_7v_4v_6v_5v_3v_1$, 
$C_8^{\upX} :=  v_1v_7v_2v_6v_5v_4v_3v_1$.
All these circuits are Hamilton circuits of $\upX$. With respect to the standard 
basis of $\upC_1(\upX;\Z/2)$ the (chains of) the Hamilton circuits 
$C_1^{\upX}$, $\dotsc$, $C_8^{\upX}$ give rise to the matrix shown 
in \eqref{dr4e3s32ws344d45f24234234324}, which has $\Z/2$-rank 
equal to $8$ $=$ $\dim_{\Z/2}(\upZ_1(\upX;\Z/2))$. 
{\scriptsize
\begin{equation}\label{dr4e3s32ws344d45f24234234324}
\begin{smallmatrix}
& & C_1^{\upX} & C_2^{\upX} & C_3^{\upX} & C_4^{\upX} 
  & C_5^{\upX} & C_6^{\upX} & C_7^{\upX} & C_8^{\upX}  \\
& & & & & & & & &  \\
v_1\wedge v_2\  &     & 1 & 1 & 1 & 1 & 1 & 1 & 1 & 0 \\
v_1\wedge v_3\  &     & 0 & 0 & 1 & 0 & 1 & 0 & 1 & 1 \\
v_1\wedge v_6\  &     & 1 & 0 & 0 & 0 & 0 & 0 & 0 & 0 \\
v_1\wedge v_7\  &     & 0 & 1 & 0 & 1 & 0 & 1 & 0 & 1 \\
v_2\wedge v_3\  &     & 1 & 1 & 0 & 0 & 0 & 0 & 0 & 0 \\
v_2\wedge v_6\  &     & 0 & 0 & 1 & 1 & 1 & 1 & 0 & 1 \\
v_2\wedge v_7\  &     & 0 & 0 & 0 & 0 & 0 & 0 & 1 & 1 \\
v_3\wedge v_4\  &     & 1 & 1 & 1 & 1 & 0 & 1 & 0 & 1 \\
v_3\wedge v_5\  &     & 0 & 0 & 0 & 1 & 1 & 1 & 1 & 0 \\
v_4\wedge v_5\  &   & 0 & 0 & 0 & 0 & 0 & 0 & 0 & 1 \\
v_4\wedge v_6\  &   & 0 & 1 & 0 & 0 & 1 & 1 & 1 & 0 \\
v_4\wedge v_7\  &   & 1 & 0 & 1 & 1 & 1 & 0 & 1 & 0 \\
v_5\wedge v_6\  &   & 1 & 1 & 1 & 1 & 0 & 0 & 1 & 1 \\
v_5\wedge v_7\  &   & 1 & 1 & 1 & 0 & 1 & 1 & 0 & 0
\end{smallmatrix}
\end{equation}
}
Therefore the span of (the chains of) $C_1^{\upX}$, $\dotsc$, $C_8^{\upX}$ is 
an $8$-dimensional subspace of the $8$-dimensional $\Z/2$-vector 
space $\upZ_1(\upX;\Z/2)$, hence (this reasoning would not be valid over a 
general principal ideal domain) is \emph{equal} to $\upZ_1(\upX;\Z/2)$, 
completing the proof of Proposition~\ref{125412412451327873483214687}.
\end{proof}

\subsection{A group-theoretical question}\label{787y6767656555y88765756} Let us 
close by pointing out something else: the graph $\Pr_r$ can also be realized as 
a Cayley graph on the semi-direct product $\Z/2\ltimes \Z/r$ with $\Z/2$ acting 
on $\Z/r$ by inversion (this is the usual dihedral group). Therefore, $\Pr_r$ is 
an example of a graph which can \emph{simultaneously} be realized as a Cayley 
graph on an abelian and on a non-abelian group. There seems to be nothing 
known in general about such graphs, and it does not seem hopeless to attempt 
a classification: 
\emph{Which graphs are simultaneously Cayley graphs on a finite abelian group 
and on a finite non-abelian group?} \emph{And what can be deduced in general about 
the non-abelian groups which admit such a constellation?} While for Cayley graphs 
on infinite non-abelian groups the prospects of reaching a complete 
classification of those $2$-sets of (group,generator)-pairs with isomorphic 
Cayley graphs seem bleak (a point of departure to this topic can be 
\cite[Section~IV.A.9]{MR1786869}), the very strong assumption of requiring one 
of the two groups to be finite abelian might mean that a complete classification 
of such graphs and such groups can be found. 

\bibliographystyle{amsplain} 
\bibliography{HEINIGonprismsmoebiusladdersandthecyclespaceofdensegraphs_arXiv_version_20111221}

\end{document}